\definecolor{darkred}{rgb}{0.4,0.1,0.1}
\definecolor{darkblue}{rgb}{0.1,0.1,0.4}
\definecolor{darkgrey}{rgb}{0.5,0.5,0.5}
\numberwithin{equation}{section}
\theoremstyle{plain}
\newtheorem{thm}{Theorem}[section]
\newtheorem{lem}[thm]{Lemma}
\newtheorem{prop}[thm]{Proposition}
\theoremstyle{remark}
\newtheorem{remark}[thm]{Remark}
\theoremstyle{plain}
\newcommand{\hyp}[1]{$C^{2}$-hypersurface as in Definition~\ref{definition_hypersurface}}
\DeclareMathOperator\ran{ran}
\newcommand{\dom}{\mathrm{dom}\,}
\begin{document}
\title[]{Boundary integral formulations of eigenvalue problems for elliptic differential operators with singular interactions and their numerical approximation by boundary element methods}

\author[M. Holzmann]{Markus Holzmann}
\address{Institut f\"{u}r Angewandte Mathematik\\
Technische Universit\"{a}t Graz\\
 Steyrergasse~30, 8010 Graz, Austria\\
E-mail: {holzmann@math.tugraz.at}}

\author[G. Unger]{Gerhard Unger}
\address{
Institut f\"{u}r Angewandte Mathematik\\
Technische Universit\"{a}t Graz\\
 Steyrergasse~30, 8010 Graz, Austria\\
E-mail: {gunger@math.tugraz.at}
} 
\begin{abstract}
 In this paper the discrete eigenvalues of elliptic second order differential operators in $L^2(\mathbb{R}^n)$, $n \in \mathbb{N}$, with singular $\delta$- and $\delta'$-interactions are studied. We show the self-adjointness of these operators and derive equivalent formulations for the eigenvalue problems involving boundary integral operators. 
These formulations are suitable for the numerical computations of  the discrete eigenvalues and the corresponding eigenfunctions  by boundary element methods. We provide convergence results and  show numerical examples.
\end{abstract}

\keywords{elliptic differential operators, $\delta$ and $\delta'$-interaction, discrete eigenvalues, integral operators, boundary element method}

\maketitle

\section{Introduction}

Schr\"odinger operators with singular interactions supported on sets of measure zero play an important role in mathematical physics. The simplest example are Schr\"odinger operators with point interactions, which were already introduced in the beginnings of quantum mechanics \cite{KP31, T35}. The importance of these models comes from the fact that they reflect the physical reality still to a reasonable exactness and that they are explicitly solvable.
The point interactions are used as idealized replacements for regular potentials, which are strongly localized close to those points supporting the interactions, and the eigenvalues can be computed explicitly via an algebraic equation involving the values of the fundamental solution corresponding to the unperturbed operator evaluated at the interaction support, cf. the monograph \cite{AGHH05} and the references therein.

Inspired by this idea, Schr\"odinger operators with singular $\delta$- and $\delta'$-interactions supported on hypersurfaces (i.e. manifolds of codimension one like curves in $\mathbb{R}^2$ or surfaces in $\mathbb{R}^3$) where introduced. Such interactions are used as idealized replacements of regular potentials which are strongly localized in neighborhoods of these hypersurfaces e.g. in the mathematical analysis of leaky quantum graphs, cf. the review \cite{E08} and the references therein, and in the theory of photonic crystals \cite{FK}. Note that in the case of $\delta$-potentials this idealized replacement is rigorously justified by an approximation procedure \cite{BEHL17}.
The self-adjointness and qualitative spectral properties of Schr\"odinger operators with $\delta$- and $\delta'$-interactions are well understood, see e.g. \cite{BGLL15, BLL13b, BEKS, E08, EK15, MPS16} and the references therein, and the discrete eigenvalues can be characterized via an abstract version of the Birman Schwinger principle. However, following the strategy from the point interaction model one arrives,  instead of an algebraic equation, at a boundary integral equation involving  the fundamental solution for the unperturbed operator.

In this paper we suggest  boundary element methods for the numerical approximations of these boundary integral equations. With this idea of computing the eigenvalues of the differential operators with singular interactions numerically, we give a link of these models to the original explicitly solvable models with point interactions. As theoretical framework for the description of the eigenvalues in terms of boundary integral equations we use the theory of eigenvalue problems for holomorphic and meromorphic Fredholm operator-valued functions~\cite{GohbergGoldberg1990, GohbergSigal:1971, KozlovMazya:1999}. For the approximation of this kind of eigenvalue problems by the Galerkin method there exists  a complete convergence analysis in the case that the operator-valued function is holomorphic~\cite{Karma1, Karma2, SU12}. This analysis provides error estimates for the eigenvalues and eigenfunctions as well as  results which guarantee that the approximation of the eigenvalue problems does not have so-called spurious eigenvalues, i.~e., additional eigenvalues which are not related to the original problem.

Other approaches for the numerical approximation of eigenvalues of differential operators with singular interactions are based on finite element methods, where $\mathbb{R}^n$ is replaced by a big ball, whose size can be estimated with the help of Agmon type estimates. Moreover, in \cite{BFT98, EN03, O06} it is shown in various settings in space dimensions $n \in \{2, 3\}$ that Schr\"odinger operators with $\delta$-potentials supported on curves (for $n=2$) or surfaces (for $n=3$) can be approximated in the strong resolvent sense by Hamiltonians with point interactions. An improvement of this approach is presented in~\cite{BO07}. This allows also to compute numerically the eigenvalues of the limit operator.

Let us introduce our problem setting and give an overview of the main results. Consider a strongly elliptic and formally symmetric partial differential operator in $\mathbb{R}^n$, $n \in \mathbb{N}$, of the form
\begin{equation*} 
  \mathcal{P} := -\sum_{j, k=1}^n \partial_k a_{j k} \partial_j + \sum_{j=1}^n \big(a_j \partial_j - \partial_j \overline{a_j} \big) + a,
\end{equation*}
see Section~\ref{section_P} for details. Moreover, let $\Omega_\text{i}$ be a bounded Lipschitz domain with boundary $\Sigma := \partial \Omega_\text{i}$, let $\Omega_\text{e} := \mathbb{R}^n \setminus \overline{\Omega_\text{i}}$, and let $\nu$ be the unit normal to $\Omega_\text{i}$. Eventually, let $\gamma$ be the Dirichlet trace and $\mathcal{B}_\nu$  the conormal derivative at $\Sigma$ (see \eqref{def_B_nu} for the definition). We are interested in the eigenvalues of two kinds of perturbations of $\mathcal{P}$ as self-adjoint operators in $L^2(\mathbb{R}^n)$ which are formally given by
\begin{equation*}
  A_{\alpha} := \mathcal{P} + \alpha \delta_\Sigma \quad \text{and} \quad B_\beta := \mathcal{P} + \beta \langle \delta_\Sigma', \cdot \rangle \delta'_\Sigma,
\end{equation*}
where $\delta_\Sigma$ is the Dirac $\delta$-distribution supported on $\Sigma$ and the interaction strengths $\alpha, \beta$ are real valued functions defined on $\Sigma$ with $\alpha, \beta^{-1} \in L^\infty(\Sigma)$. For $\mathcal{P} = -\Delta$ these operators have been intensively studied e.g. in \cite{BLL13b, BEKS, E08, EK15}, for certain strongly elliptic operators and smooth surfaces several properties of $A_\alpha$ and $B_\beta$ have been investigated in \cite{BGLL15, MPS16}. 
For the realization of $A_\alpha$ as an operator in $L^2(\mathbb{R}^n)$ we remark that if the distribution $A_\alpha f$ is generated by an $L^2$-function, then  $f_{\text{i/e}} := f \upharpoonright \Omega_{\text{i/e}}$ has to fulfill 
\begin{equation} \label{jump_condition_delta}
  \gamma f_\text{i} = \gamma f_\text{e} \quad \text{and} \quad \mathcal{B}_\nu f_\text{e} - \mathcal{B}_\nu f_\text{i} = \alpha \gamma f \quad \text{on } \Sigma,
\end{equation}
as then the singularities at $\Sigma$ compensate, cf. \cite{BLL13b}. In a similar manner, if the distribution $B_\beta f$ is generated by an $L^2$-function, then $f$ has to fulfill 
\begin{equation} \label{jump_condition_delta_prime}
  \mathcal{B}_\nu f_\text{i} = \mathcal{B}_\nu f_\text{e} \quad \text{and} \quad \gamma f_\text{e} - \gamma f_\text{i} = \beta \mathcal{B}_\nu f\quad \text{on } \Sigma.
\end{equation}
Hence, the relations~\eqref{jump_condition_delta} and~\eqref{jump_condition_delta_prime} are necessary conditions for a function $f$ to belong to the domain of definition of $A_\alpha$ and $B_\beta$, respectively.
Our aims  are to show the self-adjointness of $A_\alpha$ and $B_\beta$ in $L^2(\mathbb{R}^n)$ and to fully characterize their discrete spectra in terms of boundary integral operators. We pay particular attention to establish formulations which fit in the framework of eigenvalue problems for holomorphic and meromorphic Fredholm operator-valued functions  and which are accessible for boundary element methods. This requires a thorough analysis of the involved boundary integral operators.

When using   boundary element methods for the approximations of discrete eigenvalues of $A_\alpha$ and $B_\beta$ it is convenient to consider the related transmission problems. A value 
$\lambda$ belongs to the point spectrum of $A_\alpha$ if and only if there exists a nontrivial $f \in L^2(\mathbb{R}^n)$ satisfying
\begin{equation} \label{transmission_delta}
  \begin{split}
    (\mathcal{P} - \lambda) f = 0 \quad &\text{in } \mathbb{R}^n \setminus \Sigma, \\
    \gamma f_\text{i} = \gamma f_\text{e}, \quad &\text{on } \Sigma, \\
    \mathcal{B}_\nu f_\text{e} - \mathcal{B}_\nu f_\text{i} = \alpha \gamma f \quad& \text{on } \Sigma.
  \end{split}
\end{equation}
Similarly, $\lambda$ belongs to the point spectrum of $B_\beta$ if and only if there exists a nontrivial $f \in L^2(\mathbb{R}^n)$ satisfying
\begin{equation} \label{transmission_delta_prime}
  \begin{split}
    (\mathcal{P} - \lambda) f = 0 \quad &\text{in } \mathbb{R}^n \setminus \Sigma, \\
    \mathcal{B}_\nu f_\text{i} = \mathcal{B}_\nu f_\text{e} \quad &\text{on } \Sigma, \\
    \gamma f_\text{e} - \gamma f_\text{i} = \beta \mathcal{B}_\nu f \quad &\text{on } \Sigma.
  \end{split}
\end{equation}
This shows that the eigenvalue problems for $A_\alpha$ and $B_\beta$ are closely related to transmission problems for $\mathcal{P}-\lambda$, as they were treated in \cite{Kleefeld:2013, KSU}, and the strategies presented there are useful for the numerical calculation of the eigenvalues of $A_\alpha$ and $B_\beta$.

For the analysis of the spectra of $A_\alpha$ and $B_\beta$ a good understanding of the unperturbed operator $A_0$ being the self-adjoint realization of $\mathcal{P}$ with no jump condition at $\Sigma$ and some operators related to the fundamental solution of $\mathcal{P} - \lambda$ are necessary. Assume for $\lambda \in \rho(A_0) \cup \sigma_\text{disc}(A_0)$ that $G(\lambda; x, y)$ is the integral kernel of a suitable paramatrix associated to $\mathcal{P}-\lambda$ which is explained in detail in Section~\ref{section_P}; for $\lambda$ in the resolvent set $\rho(A_0)$ it is in fact a fundamental solution for $\mathcal{P} - \lambda$. We remark that the knowledge of $G(\lambda; x, y)$ or at least a good approximation of this function is essential for our numerical considerations. We introduce the single layer potential $\text{SL}(\lambda)$ and the double layer potential $\text{DL}(\lambda)$ acting on sufficiently smooth functions $\varphi: \Sigma \rightarrow \mathbb{C}$ and $x \in \mathbb{R}^n \setminus \Sigma$ as
\begin{equation*}
  \text{SL}(\lambda) \varphi(x) := \int_\Sigma G(\lambda; x,y) \varphi(y) \text{d} \sigma(y) 
\end{equation*}
and
\begin{equation*}
\text{DL}(\lambda) \, \varphi(x) := \int_\Sigma (\mathcal{B}_{\nu, y} G(\lambda; x,y)) \varphi(y) \text{d} \sigma(y).
\end{equation*}
As we will see, all solutions of $(\mathcal{P}-\lambda) f = 0$ will be closely related to the ranges of $\text{SL}(\lambda)$ and $\text{DL}(\lambda)$. Moreover, the boundary integral operators which are formally given by
\begin{equation*}
  \mathcal{S}(\lambda) \varphi := \gamma \text{SL}(\lambda) \varphi, \quad \mathcal{T}(\lambda)' \varphi := \mathcal{B}_\nu (\text{SL}(\lambda) \varphi)_\text{i} + \mathcal{B}_\nu (\text{SL}(\lambda) \varphi)_\text{e},
\end{equation*}
and 
\begin{equation*}
  \mathcal{T}(\lambda) \varphi := \gamma (\text{DL}(\lambda) \varphi)_\text{i} + \gamma (\text{DL}(\lambda) \varphi)_\text{e}, \quad \mathcal{R}(\lambda) \varphi := - \mathcal{B}_\nu \text{DL}(\lambda) \varphi,
\end{equation*}
will play an important role. While the properties of the above operators are well-known for many special cases, e.g. for $\mathcal{P} = -\Delta$, the corresponding results are, to the best of the authors' knowledge, not easily accessible in the literature for general $\mathcal{P}$. Hence, for completeness we spend some efforts in Section~\ref{section_surface_potentials} to provide those properties of the above integral operators which are needed for our considerations.
Eventually, following a strategy from \cite{BR15}, we show that the discrete eigenvalues of $A_0$ can be characterized as the poles of an operator-valued function which is built up by the operators $\mathcal{S}(\lambda)$, $\mathcal{T}(\lambda)$, $\mathcal{T}(\lambda)'$, and $\mathcal{R}(\lambda)$; see also \cite{BMNW08} for related results. Compared to \cite{BR15} our formulation is particularly useful for the application of boundary element methods to compute the discrete eigenvalues of $A_0$ numerically, as the appearing operators are easily accessible for numerical computations.

In order to introduce $A_\alpha$ and $B_\beta$ rigorously, consider the Sobolev spaces
\begin{equation*}
  H^1_\mathcal{P}(\Omega) := \{ f \in H^1(\Omega): \mathcal{P} f \in L^2(\Omega) \}.
\end{equation*}
Inspired by~\eqref{jump_condition_delta} and~\eqref{jump_condition_delta_prime} we define $A_\alpha$ as the partial differential operator in $L^2(\mathbb{R}^n)$ given by
\begin{equation}  \label{def_A_alpha_intro}
  \begin{split}
    A_\alpha f &:= \mathcal{P} f_\text{i} \oplus \mathcal{P} f_\text{e}, \\
    \dom A_\alpha &:= \big\{ f = f_\text{i} \oplus f_\text{e} \in H^1_\mathcal{P}(\Omega_\text{i}) \oplus H^1_\mathcal{P}(\Omega_\text{e}): \gamma f_\text{i} = \gamma f_\text{e}, \, \mathcal{B}_\nu f_\text{e} - \mathcal{B}_\nu f_\text{i} = \alpha \gamma f \big\},
  \end{split}
\end{equation}
and $B_\beta$ by
\begin{equation} \label{def_B_beta_intro} 
  \begin{split}
    B_\beta f &:= \mathcal{P} f_\text{i} \oplus \mathcal{P} f_\text{e}, \\
    \dom B_\beta &:= \big\{ f = f_\text{i} \oplus f_\text{e} \in H^1_\mathcal{P}(\Omega_\text{i}) \oplus H^1_\mathcal{P}(\Omega_\text{e}): \mathcal{B}_\nu f_\text{i} = \mathcal{B}_\nu f_\text{e}, \, \gamma f_\text{e} - \gamma f_\text{i} = \beta \mathcal{B}_\nu f \big\}.
  \end{split}
\end{equation}
In Section~\ref{section_delta} and~\ref{section_delta_prime} we show the self-adjointness of these operators in $L^2(\mathbb{R}^n)$ and via the Weyl theorem that the essential spectra of $A_\alpha$ and $B_\beta$ coincide with the essential spectrum of the unperturbed operator $A_0$. Hence, to know the spectral profile of $A_\alpha$ and $B_\beta$ we have to understand the discrete eigenvalues of these operators.
The characterization of the discrete eigenvalues of $A_\alpha$ and $B_\beta$ in terms of boundary integral equations depends on the discrete spectrum of the unperturbed operator $A_0$ being empty or not. Let us consider the first case. It turns out that $\lambda\in\rho(A_0)$ is a discrete eigenvalue of $A_\alpha$ if and only if there exists a nontrivial $\varphi\in L^2(\Sigma)$ such that 
\begin{equation} \label{Birmal_Schwinger_delta_intro}
  (I + \alpha \mathcal{S}(\lambda)) \varphi = 0.
\end{equation}
Similarly, the existence of  a discrete eigenvalue  $\lambda\in\rho(A_0)$ of $B_\beta$ is  equivalent to the existence of a corresponding nontrivial $\psi\in H^{1/2}(\Sigma)$ which satisfies
\begin{equation} \label{Birmal_Schwinger_delta_prime_intro}
  (\beta^{-1} + \mathcal{R}(\lambda)) \psi = 0.
\end{equation}
As shown in Sections~\ref{section_delta} and~\ref{section_delta_prime} the boundary integral formulations in~\eqref{Birmal_Schwinger_delta_intro} and ~\eqref{Birmal_Schwinger_delta_prime_intro} are eigenvalue problems for holomorphic Fredholm operator-valued functions. These eigenvalue problems can be approximated by standard boundary element methods. The convergence of the approximations follows from well-known abstract convergence results~\cite{Karma1, Karma2, SU12}, which are summarized in Section~\ref{SectionGalerkinApprox}.   
In the case that $\sigma_\text{disc}(A_0)$ is not empty, still all eigenvalues of $A_\alpha$ and $B_\beta$ in $\rho(A_0)$ can be characterized and computed using~\eqref{Birmal_Schwinger_delta_intro} and~\eqref{Birmal_Schwinger_delta_prime_intro}, respectively. For the  possible eigenvalues $A_\alpha$ and $B_\beta$ which lie in $\sigma_\text{disc}(A_0)$ also boundary integral formulations are provided which are accessible by boundary element methods and discussed in detail in Section~\ref{section_delta} and~\ref{section_delta_prime}.

Finally, let us note that our model also contains certain classes of magnetic Schr\"odinger operators with singular interactions with rather strong limitations for the magnetic field. Nevertheless, one could use our strategy and the Birman-Schwinger principle for magnetic Schr\"odinger operators with more general magnetic fields provided in \cite{BEHL19, O06} to compute the discrete eigenvalues of such Hamiltonians numerically. Also, an extension of our results to Dirac operators with $\delta$-shell interactions~\cite{BEHL19QS} would be of interest, but this seems to be a rather challenging problem.

Let us shortly describe the structure of the paper. In Section~\ref{SectionGalerkinApprox} we recall some basic facts on eigenvalue problems of holomorphic Fredholm operator-valued functions and on the approximation of this kind of eigenvalue problems by the Galerkin method. In Section~\ref{section_P} we introduce the elliptic differential operator $\mathcal{P}$ and the associated integral operators and investigate the properties of the unperturbed operator $A_0$. Sections~\ref{section_delta} and~\ref{section_delta_prime} are devoted to the analysis of $A_\alpha$ and $B_\beta$, respectively. We introduce these operators as partial differential operators in $L^2(\mathbb{R}^n)$, show their self-adjointness and derive boundary integral formulations to characterize their discrete eigenvalues. Moreover, we discuss how these boundary integral equations can be solved numerically by boundary element methods, provide convergence results,  and give some numerical examples.

\subsection*{Notations}

Let $X$ and $Y$ be complex Hilbert spaces. The set of all anti-linear bounded functionals on $X$ and $Y$ are denoted by $X^*$ and $Y^*$, respectively, and the sesquilinear duality product in $X^* \times X$, which is linear in the first and anti-linear in the second argument, is $( \cdot, \cdot)$; the underlying spaces of the duality product will be clear from the context. Next, the set of all bounded and everywhere defined linear operators from $X$ to $Y$ is $\mathcal{B}(X, Y)$; if $X=Y$, then we simply write $\mathcal{B}(X) := \mathcal{B}(X,X)$. For $A \in \mathcal{B}(X,Y)$ the adjoint $A^* \in \mathcal{B}(Y^*, X^*)$ is uniquely determined by the relation $(A x, y)  = (x, A^* y)$ for all $x \in X$ and $y \in Y^*$.

If $A$ is a self-adjoint operator in a Hilbert space, then its domain, range, and kernel are denoted by $\dom A$, $\ran A$, and $\ker A$. The resolvent set, spectrum, discrete, essential, and point spectrum are $\rho(A)$, $\sigma(A)$, $\sigma_\text{disc}(A)$, $\sigma_\text{ess}(A)$, and $\sigma_\text{p}(A)$, respectively. Finally, if $\Lambda$ is an open subset of $\mathbb{C}$ and $\mathcal{A}: \Lambda \rightarrow \mathcal{B}(X, X^*)$, then we say that $\lambda \in \Lambda$  is an eigenvalue of $\mathcal{A}(\cdot)$, if $\ker \mathcal{A}(\lambda) \neq \{ 0 \}$.

\subsection*{Acknowledgements}

We are specially grateful to O.~Steinbach for encouraging us to work on this project. Moreover, we thank J.~Behrndt and J.~Rohleder for helpful discussions and literature hints.

\newcommand{\opA}{\mathcal{F}}
\newcommand{\opC}{\mathcal{C}}
\newcommand{\R}{\mathbb{R}}
\newcommand{\N}{\mathbb{N}}
\newcommand{\C}{\mathbb{C}}
\section{Galerkin approximation of eigenvalue problems for holomorphic Fredholm operator-valued functions}\label{SectionGalerkinApprox}

In this section we present basic results of the theory of eigenvalue problems for holomorphic Fredholm operator-valued functions \cite{ GohbergGoldberg1990, KozlovMazya:1999} and  summarize main results of the convergence analysis of the Galerkin approximation of such eigenvalue problems \cite{Karma1,Karma2,Unger:2009}. These results build the abstract framework which we will utilize in order to show the  convergence of the boundary element method  for the approximation of the discrete eigenvalues of $A_\alpha$ as well as of $B_\beta$ which lie in $\rho(A_0)$. Under specified conditions  the convergence for discrete eigenvalues of $A_\alpha$ and $B_\beta$ in $\sigma_\text{disc}(A_0)$ is also guaranteed.    
 
Let $X$ be a Hilbert space and let $\Lambda \subset \C$ be an open and connected subset of $\C$. We consider 
an operator-valued function $\opA:\Lambda \to \mathcal{B}(X,X^*)$ which depends holomorphically on $\lambda \in \Lambda$, i.e., for each    $\lambda_0 \in \Lambda$ the derivative $\opA'(\lambda_0):=\lim_{\lambda \to \lambda_0} \frac{1}{\lambda-\lambda_0} \left\|\opA(\lambda) - \opA(\lambda_0)\right\|_{L( X,X^*)}$ exists.
Moreover, we assume that $\opA(\lambda)$ is a Fredholm operator of index zero for all $\lambda\in\Lambda$ and that it satisfies a so-called G\r{a}rding's  inequality, i.~e., there exists a compact operator $\opC(\lambda):X\rightarrow X^*$ and a constant $c(\lambda)>0$ for all $\lambda\in\Lambda$ such that
\begin{equation}\label{Inequality:Garding}
 \vert \left(\left(\opA(\lambda)+\opC(\lambda)\right)u,u\right)\vert\geq c(\lambda)\|u\|_X^2\quad\text{for all }u\in X.
\end{equation}

We consider the nonlinear eigenvalue problem for the operator-valued function~$\opA(\cdot)$  of the  form: find eigenvalues $\lambda \in \Lambda$ and corresponding eigenelements $u\in  X\setminus \{0\}$ such that 
 \begin{equation} \label{Eq:abstractEVP}
   \opA(\lambda)   u = 0.
 \end{equation}
In the following we assume that the set $\{\lambda\in\Lambda: \exists \opA(\lambda)^{-1}\in \mathcal{B}(X^*,X)\}$    
is not empty. Then the set of eigenvalues in $\Lambda$ has no accumulation points inside of $\Lambda$ \cite[Cor. XI 8.4]{GohbergGoldberg1990}. The dimension of the null space $\ker \opA(\lambda)$ of an 
eigenvalue $\lambda$ is called the geometric
multiplicity of $\lambda$.  
An ordered collection of elements  $u_0, u_1,\ldots,u_{m-1}$ in $X$ is
called  a Jordan chain of $(\lambda,u_0)$, if it is an eigenpair  
 and if
\begin{equation*}
\sum_{j=0}^{n}\frac{1}{j!}\opA^{(j)}(\lambda)u_{n-j}=0 \quad\text{for all }
n=0,1,\ldots, m-1
\end{equation*} 
is satisfied, where $\opA^{(j)}$ denotes the $j$th derivative. 
The length of any Jordan chain of an eigenvalue is
finite \cite[Lem.~A.8.3]{KozlovMazya:1999}.
Elements of any Jordan chain of an eigenvalue $\lambda$ are called generalized
eigenelements of $\lambda$. The closed linear space of all generalized 
eigenelements
of an
eigenvalue $\lambda$ is called generalized eigenspace of $\lambda$ and is 
denoted by
$G(\opA,\lambda)$. The dimension of the generalized eigenspace 
$G(\opA,\lambda)$ is
finite \cite[Prop.~A.8.4]{KozlovMazya:1999} and it is 
referred to as algebraic  multiplicity of $\lambda$.
 
\subsection{Galerkin approximation}
For the approximation of the eigenvalue problem~\eqref{Eq:abstractEVP} we consider  a conforming Galerkin approximation. We assume that $\left( X_N\right)_{N\in \N}$ is a sequence of finite-dimensional subspaces of $ X$ such that the orthogonal projection $ P_N: X \to  X_N$ converges pointwise to the identity $I:X\rightarrow X$, i.e., for all $u \in  X$ we have
 \begin{equation}\label{Eq:ApproxSpace}
  \| P_N u - u\|_ X=\inf_{v_N \in  X_N} \| v_N - u\|_ X \to 0 \quad \text{ as } N \to \infty.
 \end{equation}
The Galerkin approximation of the  eigenvalue problem reads as: find eigenpairs $(\lambda_N,u_{N}) \in \Lambda \times  X_N\setminus \{0\}$ such that 
 \begin{equation}\label{Eq:GalerkinEVP}
  \left( \opA(\lambda_N)   u_N, {v}_N\right) = 0\quad\text{for all }v_n\in X_N.
 \end{equation}

For the formulation of the convergence results  we need the definition of the gap $\delta_V(V_1,V_2)$ of two subspaces $V_1,V_2$ of a normed space $V$:
\begin{equation*}
\delta_V(V_1,V_2):=\sup_{\substack{v_1\in V_1\\
\|v_1\|_V= 1}} \inf_{v_2\in V_2}\|v_1-v_2\|_V.
\end{equation*}
\begin{thm}\label{Theorm:AbstrctConvergenceResults}
Let $\opA:\Lambda\rightarrow \mathcal{B}(X,X^*)$ be a holomorphic operator-valued function and assume that for each $\lambda\in\Lambda$ there exist a compact operator $\opC(\lambda):X\rightarrow X^*$ and a constant $c(\lambda)>0$ such that inequality~\eqref{Inequality:Garding} is satisfied. Further, suppose that $(X_N)_{N\in\N}$ is a sequence of finite-dimensional subspaces of $X$ which fulfills  the property~\eqref{Eq:ApproxSpace}. Then the following holds true:
\begin{itemize}
\item[(i)] (Completeness of the spectrum of the Galerkin eigenvalue problem) For each eigenvalue $\lambda\in\Lambda$ of the operator-valued function $\opA(\cdot)$ there exists a sequence $(\lambda_N)_\N$ of eigenvalues of the Galerkin eigenvalue problem~\eqref{Eq:GalerkinEVP} such that
 \begin{equation*}
  \lambda_N\rightarrow \lambda\quad\text{as }N\rightarrow\infty.
\end{equation*}
\item[(ii)] (Non-pollution of the spectrum of the Galerkin eigenvalue problem) Let $K\subset\Lambda$ be a compact and connected set such that $\partial K$ is a simple 
rectifiable curve. Suppose that  there is no eigenvalue of $\opA(\cdot)$ 
 in $K$. Then there exists an $N_0\in\N$ such that for all 
$N\geq N_0$ the Galerkin eigenvalue problem \eqref{Eq:GalerkinEVP} has 
no eigenvalues in~$K$.  
\item[(iii)] Let $D\subset\Lambda$ be a compact  and    connected set such that $\partial D$ is a simple 
rectifiable curve. Suppose that $\lambda\in
\mathring{D}$ is the only eigenvalue of $\opA$ in $D$.  
Then
there exist an
$N_0\in\N$ and a constant  $c>0$ such that for all $N\geq N_0$  we
have:
\begin{itemize}
\item[(a)] For all eigenvalues $\lambda_N$ of the Galerkin eigenvalue problem~\eqref{Eq:GalerkinEVP} in $D$ 
\begin{equation*}
 \vert \lambda-\lambda_N\vert \leq
c\delta_X(G(\opA,\lambda),X_N)^{1/\ell}\delta_{X}(G(\opA^*,\lambda),X_N)^{1/\ell}
\end{equation*}
holds, where $\opA^*(\cdot):=(\opA(\overline{\cdot}))^*$ is the adjoint function with respect to the pairing $(\cdot,\cdot)$ for $X^*\times X$ and $\ell$ is the maximal length of a Jordan chain corresponding to~
$\lambda$.
\item[(b)] If  $(\lambda_N,u_N)$ is an eigenpair of 
\eqref{Eq:GalerkinEVP} with $\lambda_N\in D$ and  
$\|u_N\|_{X}=1$, then 
\begin{equation*}
 \inf_{u\in \ker(\opA,\lambda)}\| u-u_N\|_{X}
 \leq 
 c 
 \left(\vert \lambda_N-\lambda \vert + \delta_X(\ker(\opA,\lambda),X_N)\right).
\end{equation*}
\end{itemize}
\end{itemize}
\end{thm}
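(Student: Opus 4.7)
The plan is to reduce all three statements to the abstract convergence theory of Karma~\cite{Karma1, Karma2} for holomorphic Fredholm operator-valued functions, exactly as carried out in~\cite{Unger:2009, SU12}. The key step is to verify that the Galerkin discretization produces a sequence of operator-valued functions which converges \emph{regularly} (in the sense of discrete approximation theory) to $\opA(\cdot)$ on compact subsets of $\Lambda$, from which (i) and (ii) follow immediately and (iii) is obtained by a quantitative refinement.

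Concretely, the first step is to introduce the Galerkin operators $\opA_N(\lambda):X_N\to X_N^*$ defined by $(\opA_N(\lambda)u_N,v_N):=(\opA(\lambda)u_N,v_N)$ for $u_N,v_N\in X_N$, and show that they form a discretely convergent sequence. Since $\opA(\cdot)$ is holomorphic and $P_N\to I$ pointwise on $X$, both $\opA(\lambda)P_N\to\opA(\lambda)$ and $(I-P_N)^*\opA(\lambda)\to 0$ pointwise, uniformly on compacta in $\Lambda$ by a Vitali-type argument. Using the compactness of $\opC(\lambda)$ together with~\eqref{Eq:ApproxSpace} one obtains a discrete G\r{a}rding inequality: for every $\lambda\in\Lambda$ there exist $N_0(\lambda)$ and $\tilde c(\lambda)>0$ with
\begin{equation*}
  |(\opA_N(\lambda)u_N,u_N)+(\opC(\lambda)u_N,u_N)|\geq \tilde c(\lambda)\|u_N\|_X^2,\quad u_N\in X_N,\ N\geq N_0(\lambda).
\end{equation*}
This inequality, combined with the compactness of $\opC(\lambda)$, is exactly the regularity condition needed to apply the abstract framework of~\cite{Karma1}.

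The second step is then to cite the completeness and non-pollution theorems from~\cite{Karma1, Karma2}: under regular convergence of a holomorphic Fredholm sequence on $\Lambda$ with a resolvent point (guaranteed here by the assumption that $\{\lambda:\opA(\lambda)^{-1}\in\mathcal{B}(X^*,X)\}\ne\varnothing$ and the discrete G\r{a}rding inequality), the spectra converge in the Hausdorff sense on every compact $K\subset\Lambda$. Statement (i) is immediate, and (ii) follows since any accumulation point of Galerkin eigenvalues must be a genuine eigenvalue, contradicting the assumption on $K$.

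For part (iii), the plan is to apply the refined estimates of Karma~\cite{Karma2} (see also~\cite[Thm.~2.3.1]{Unger:2009}), which express the eigenvalue error through the determinant of the small finite-dimensional problem obtained by projecting with the spectral (Riesz) projector of $\opA(\cdot)$ around $\lambda$. Expanding this determinant to order $\ell$ and using that the principal part involves gap distances of $G(\opA,\lambda)$ and $G(\opA^*,\lambda)$ to $X_N$ yields~(a). Assertion~(b) then follows from (a) together with the uniform invertibility of $\opA_N(\mu)$ for $\mu$ on a small contour around $\lambda$ separating it from the remaining spectrum, which permits the construction of a spectral projector $P_N(\lambda)$ converging in norm to the spectral projector of $\opA(\cdot)$ at $\lambda$ and reduces the eigenfunction error to a standard best-approximation estimate.

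The main obstacle is carefully setting up the duality identifications: since $\opA(\lambda)$ maps $X$ to $X^*$, the transposition in the G\r{a}rding inequality and the interpretation of $\opA^*(\lambda)=\opA(\overline\lambda)^*$ with respect to the sesquilinear pairing $(\cdot,\cdot)$ must be consistent with the orthogonal projection $P_N$ used in~\eqref{Eq:ApproxSpace}. Once this bookkeeping is fixed, the verification of regular convergence and the application of~\cite{Karma1, Karma2, Unger:2009} is essentially mechanical, so we defer the technical details to those references and merely collect the statement in the form convenient for our subsequent use.
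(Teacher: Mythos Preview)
Your proposal is correct and follows essentially the same approach as the paper: the paper's proof consists of citing \cite[Lem.~4.1]{SU12} for the verification that the Galerkin scheme meets the regular-convergence hypotheses of Karma's abstract framework (precisely the discrete G\r{a}rding step you sketch), and then invoking \cite[Thm.~2]{Karma1} for (i)--(ii), \cite[Thm.~3]{Karma2} for (iii)(a), and \cite[Thm.~4.3.7]{Unger:2009} for (iii)(b). Your write-up simply unpacks what those citations contain; there is no substantive difference in strategy.
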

\begin{proof}
The Galerkin method fulfills the required properties in order to apply the abstract convergence results in \cite{Karma1, Karma2, Unger:2009} to eigenvalue problems for  holomorphic operator-valued functions which satisfy inequality~\eqref{Inequality:Garding}, see \cite[Lem.~4.1]{SU12}. We refer to  \cite[Thm.~2]{Karma1} for assertion~(i) and~(ii), and to~\cite[Thm.~3]{Karma2} for~(iii)a). The error estimate in~(iii)b) is a consequence of~\cite[Thm.~4.3.7]{Unger:2009}.
\end{proof}

\section{Strongly elliptic differential operators and associated integral operators} \label{section_P}

In this section we introduce the class of elliptic differential operators which will be perturbed by the singular $\delta$- and $\delta'$-interactions supported on a hypersurface $\Sigma$, and we introduce the integral operators $\mathcal{S}(\lambda)$, $\mathcal{T}(\lambda)$, $\mathcal{T}(\lambda)'$, and $\mathcal{R}(\lambda)$ in Section~\ref{section_surface_potentials} in a mathematically rigorous way and recall their properties, which will be of importance for our further studies. Eventually, in Section~\ref{section_ev_A_0} we show how the discrete eigenvalues of $A_0$ can be characterized with the help of these boundary integral operators. But first, we introduce our notations for function spaces which we use in this paper.

\subsection{Function spaces}

For an open set $\Omega \subset \mathbb{R}^n$, $n \in \mathbb{N}$, and $k \in \mathbb{N}_0 \cup \{ \infty \}$ we write $C^k(\Omega)$ for the set of all $k$-times continuously differentiable functions and 
\begin{equation*}
  C_b^\infty(\Omega) := \{ f \in C^\infty(\Omega): f, \nabla f \text{ are bounded} \}.
\end{equation*}Moreover, the Sobolev spaces of order $s \in \mathbb{R}$ are denoted by $H^s(\Omega)$, see \cite[Chapter~3]{M00} for their definition.

In the following we assume that $\Omega \subset \mathbb{R}^n$ is a Lipschitz domain in the sense of \cite[Definition~3.28]{M00}. We emphasize that $\Omega$ can be bounded or unbounded, but $\partial \Omega$ has to be compact. Note that in this case we can identify $H^s(\mathbb{R}^n \setminus \partial \Omega)$ with $H^s(\Omega) \oplus H^s(\mathbb{R}^n \setminus \overline{\Omega})$. With the help of the integral on $\partial \Omega$ with respect to the Hausdorff measure we get in a natural way the definition of $L^2(\partial \Omega)$. In a similar flavor, we denote the Sobolev spaces on $\partial \Omega$ of order $s \in [0, 1]$ by $H^s(\partial \Omega)$, see \cite{M00} for details on their definition. For $s \in [-1,0]$ we define $H^s(\partial \Omega) := (H^{-s}(\partial \Omega))^*$ as the anti-dual space of $H^{-s}(\partial \Omega)$.

Finally, we recall that the Dirichlet trace operator $C^\infty(\overline{\Omega}) \ni f \mapsto f|_{\partial \Omega}$ can be extended for any $s \in (\frac{1}{2}, \frac{3}{2})$ to a bounded and surjective operator
\begin{equation} \label{Dirichlet_trace}
  \gamma: H^s(\Omega) \rightarrow H^{s-1/2}(\partial \Omega);
\end{equation}
cf. \cite[Theorem~3.38]{M00}.

\subsection{Strongly elliptic differential operators}

Let   $a_{j k}, a_j, a \in C_b^\infty(\mathbb{R}^n)$,  $n \in \mathbb{N}$, and $j, k \in \{ 1, \dots, n\}$, and define the differential operator 
\begin{equation} \label{def_P}
  \mathcal{P} f := -\sum_{j, k=1}^n \partial_k(a_{j k} \partial_j f) + \sum_{j=1}^n \big(a_j \partial_j f - \partial_j(\overline{a_j} f)\big) + a f 
\end{equation}
in the sense of distributions.
We assume that $a_{j k} = \overline{a_{k j}}$ and that $a$ is real valued; then $\mathcal{P}$ is formally symmetric. Moreover, we assume that $\mathcal{P}$ is strongly elliptic, that means there exists a constant $C>0$ independent of $x$ such that
\begin{equation*} 
  \sum_{j, k=1}^n a_{j k}(x) \xi_j \overline{\xi_k} \geq C |\xi|^2
\end{equation*}
holds for all $x \in \mathbb{R}^n$ and all $\xi \in \mathbb{C}^n$.  

Next, define for an open subset $\Omega \subset \mathbb{R}^n$ the sesquilinear form $\Phi_\Omega: H^1(\Omega) \times H^1(\Omega)$ by
\begin{equation} \label{def_Phi_Omega}
  \Phi_\Omega[f, g] := \int_\Omega \left[ \sum_{j, k=1}^n a_{j k} \partial_j f \overline{\partial_k g} + \sum_{j=1}^n \big(a_j (\partial_j f) \overline{g} + f (\overline{a_j \partial_j g}) \big) + a f \overline{g} \right] \text{d} x.
\end{equation}

In the following assume that $\Omega \subset \mathbb{R}^n$ is a Lipschitz set, let $\nu$ be the unit normal vector field at $\partial \Omega$ pointing outwards $\Omega$, denote by $\gamma$ the Dirichlet trace operator, see~\eqref{Dirichlet_trace}, and introduce for $f \in H^2(\Omega)$ the conormal derivative $\mathcal{B}_\nu f$ by
\begin{equation} \label{def_B_nu}
  \mathcal{B}_\nu f := \sum_{k=1}^n \nu_k \sum_{j=1}^n \gamma (a_{j k} \partial_j f) + \sum_{j=1}^n \nu_j \gamma (\overline{a_j} f).
\end{equation}
Then one can show that 
\begin{equation} \label{Green_classic}
  (\mathcal{P} f, g)_{L^2(\Omega)} = \Phi_\Omega[f, g] - (\mathcal{B}_\nu f, \gamma g)_{L^2(\partial \Omega)}, \quad f \in H^2(\Omega), ~g \in H^1(\Omega),
\end{equation}
holds.
Next, we introduce the Sobolev space 
\begin{equation} \label{def_H_P}
  H_{\mathcal{P}}^1(\Omega) := \big\{ f \in H^1(\Omega): \mathcal{P} f \in L^2(\Omega) \big\},
\end{equation}
where $\mathcal{P} f$ is understood in the distributional sense. It is well known that the conormal derivative $\mathcal{B}_\nu$ has a bounded extension
\begin{equation} \label{conormal_derivative_extension}
  \mathcal{B}_\nu: H^1_\mathcal{P}(\Omega) \rightarrow H^{-1/2}(\partial \Omega),
\end{equation}
such that~\eqref{Green_classic} extends to
\begin{equation} \label{Green_extended}
  (\mathcal{P} f, g)_{L^2(\Omega)} = \Phi_\Omega[f, g] - ( \mathcal{B}_\nu f, \gamma g), \quad f \in H^1_\mathcal{P}(\Omega),~g \in H^1(\Omega),
\end{equation}
where the term on the boundary in~\eqref{Green_classic} is replaced by the duality product in $H^{-1/2}(\Sigma)$ and $H^{1/2}(\Sigma)$, see \cite[Lemma~4.3]{M00}. We remark that this formula also holds for $\Omega = \mathbb{R}^n$, then the term on the boundary is not present.

Our first goal is to construct the unperturbed self-adjoint operator $A_0$ in $L^2(\mathbb{R}^n)$ associated to $\mathcal{P}$. With the help of \cite[Theorem~4.7]{M00} it is not difficult to show that the sesquilinear form $\Phi_{\mathbb{R}^n}$ fulfills the assumptions of the first representation theorem \cite[Theorem~VI~2.1]{K95}, so we can define $A_0$ as the self-adjoint operator corresponding to $\Phi_{\mathbb{R}^n}$. The following result is well-known, the simple proof is left to the reader.

\begin{lem} \label{lemma_free_Op}
  Let $\mathcal{P}$ be given by~\eqref{def_P} and let the form $\Phi_{\mathbb{R}^n}$ be defined by~\eqref{def_Phi_Omega}. Then $\Phi_{\mathbb{R}^n}$ is densely defined, symmetric, bounded from below, and closed. The self-adjoint operator $A_0$ in $L^2(\mathbb{R}^n)$ associated to $\Phi_{\mathbb{R}^n}$ is
  \begin{equation} \label{def_free_Op}
    A_0 f  =\mathcal{P} f, \quad \dom A_0  = H^2(\mathbb{R}^n).
  \end{equation}
\end{lem}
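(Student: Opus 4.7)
The plan is to verify the four form-theoretic properties and then identify the associated operator via the first representation theorem together with elliptic regularity. Denseness is immediate because $\dom \Phi_{\mathbb{R}^n} = H^1(\mathbb{R}^n) \supset C_c^\infty(\mathbb{R}^n)$ is dense in $L^2(\mathbb{R}^n)$. Symmetry follows by a direct computation: the principal part is symmetric thanks to $a_{jk} = \overline{a_{kj}}$ (swap $j \leftrightarrow k$ and take complex conjugate under the integral), the first-order block is symmetric by construction (the two terms are already complex conjugates of one another after exchanging $f$ and $g$), and the zeroth-order term is symmetric because $a$ is real valued.

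For semi-boundedness and closedness I would establish a G\r{a}rding-type inequality on $H^1(\mathbb{R}^n)$. Strong ellipticity yields
\begin{equation*}
  \mathrm{Re}\,\int_{\mathbb{R}^n} \sum_{j,k=1}^n a_{jk}\,\partial_j f\,\overline{\partial_k f}\,\mathrm{d}x \;\geq\; C\,\|\nabla f\|_{L^2}^2.
\end{equation*}
For the first-order contributions I would use Cauchy--Schwarz with $\varepsilon$ (together with $a_j \in C_b^\infty$) to obtain
\begin{equation*}
  \Big|\int_{\mathbb{R}^n} \big(a_j (\partial_j f)\overline{f} + f\overline{a_j \partial_j f}\big)\,\mathrm{d}x\Big|
  \;\leq\; \varepsilon\|\nabla f\|_{L^2}^2 + c_\varepsilon\|f\|_{L^2}^2,
\end{equation*}
and the zeroth-order term is trivially controlled by $\|a\|_\infty\|f\|_{L^2}^2$. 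Choosing $\varepsilon < C$ yields a constant $\kappa>0$ and some $M\in\mathbb{R}$ with
\begin{equation*}
  \Phi_{\mathbb{R}^n}[f,f] + M\,\|f\|_{L^2}^2 \;\geq\; \kappa\,\|f\|_{H^1}^2,\qquad f\in H^1(\mathbb{R}^n),
\end{equation*}
which simultaneously shows semi-boundedness and, since $\Phi_{\mathbb{R}^n}$ is clearly bounded on $H^1(\mathbb{R}^n)$ by the same type of estimate from above, the equivalence of the form norm with the $H^1$-norm; closedness then reduces to completeness of $H^1(\mathbb{R}^n)$.

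The first representation theorem \cite[Thm.~VI.2.1]{K95} now associates a self-adjoint operator $A_0$ to $\Phi_{\mathbb{R}^n}$ whose domain consists of those $f\in H^1(\mathbb{R}^n)$ for which $g \mapsto \Phi_{\mathbb{R}^n}[f,g]$ extends continuously in the $L^2$-norm, with $A_0 f$ being the representing element. For $f\in H^2(\mathbb{R}^n)$, the boundaryless version of the Green identity~\eqref{Green_extended} gives $\Phi_{\mathbb{R}^n}[f,g] = (\mathcal{P} f,g)_{L^2}$ for all $g\in H^1(\mathbb{R}^n)$, so $H^2(\mathbb{R}^n) \subset \dom A_0$ and the operator acts as $\mathcal{P}$ there. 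The remaining, and in my view only nontrivial, step is the reverse inclusion $\dom A_0 \subset H^2(\mathbb{R}^n)$. Testing $\Phi_{\mathbb{R}^n}[f,g] = (A_0 f,g)_{L^2}$ against $g\in C_c^\infty(\mathbb{R}^n)$ shows that $\mathcal{P} f = A_0 f$ in the distributional sense; since $f\in H^1(\mathbb{R}^n)$ and $\mathcal{P} f\in L^2(\mathbb{R}^n)$, the standard interior $H^2$-regularity for strongly elliptic operators with smooth, bounded coefficients (via Nirenberg's difference-quotient method, with uniform constants coming from $a_{jk},a_j,a\in C_b^\infty(\mathbb{R}^n)$) upgrades $f$ to $H^2(\mathbb{R}^n)$; no boundary regularity is needed since $\mathbb{R}^n$ has no boundary. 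This closes the identification $\dom A_0 = H^2(\mathbb{R}^n)$ and confirms $A_0 f = \mathcal{P} f$. The main technical point is thus the global $H^2$-regularity estimate, although on the whole space and with $C_b^\infty$-coefficients this is essentially a textbook consequence of the difference-quotient argument.
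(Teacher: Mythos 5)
Your proposal is correct and follows exactly the route the paper indicates (it leaves the proof to the reader, pointing to a G\r{a}rding inequality via \cite[Theorem~4.7]{M00} and the first representation theorem \cite[Theorem~VI~2.1]{K95}): density, symmetry, a G\r{a}rding bound giving semiboundedness and closedness, the boundaryless Green identity for $H^2(\mathbb{R}^n)\subset\dom A_0$, and global elliptic regularity for the reverse inclusion. All steps are sound, including your remark that the $C_b^\infty$ coefficients yield uniform constants in the difference-quotient argument so that $H^2_{\mathrm{loc}}$ upgrades to global $H^2(\mathbb{R}^n)$.
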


Assume that $\Omega_\text{i}$ is a bounded Lipschitz domain in $\mathbb{R}^n$ with boundary $\Sigma := \partial \Omega_\text{i}$, let $\nu$ be the unit normal to $\Omega_\text{i}$, and set $\Omega_\text{e} := \mathbb{R}^n \setminus \overline{\Omega_\text{i}}$.
Then it follows from \cite[Theorem~4.20]{M00} that a function $f = f_\text{i} \oplus f_\text{e} \in H^1_\mathcal{P}(\Omega_\text{i}) \oplus H^1_\mathcal{P}(\Omega_\text{e})$ fulfills
\begin{equation} \label{condition_H2}
  f \in \dom A_0 = H^2(\mathbb{R}^n) \quad \Longleftrightarrow \quad \gamma f_\text{i} = \gamma f_\text{e} \text{ and } \mathcal{B}_\nu f_\text{i} = \mathcal{B}_\nu f_\text{e}.
\end{equation}

Next, we review some properties of the resolvent of $A_0$ which are needed later. In the following, let $\lambda \in \rho(A_0) \cup \sigma_\text{disc}(A_0)$ be fixed. Recall that a map $\mathcal{G}$ is called a {\it paramatrix} for $\mathcal{P}-\lambda$ in the sense of \cite[Chapter~6]{M00}, if there exist integral operators $\mathcal{K}_1, \mathcal{K}_2$ with $C^\infty$-smooth integral kernels such that
\begin{equation*}
  \mathcal{G} (\mathcal{P} - \lambda) u = u - \mathcal{K}_1 u \quad \text{and} \quad (\mathcal{P} - \lambda) \mathcal{G} u = u - \mathcal{K}_2 u
\end{equation*}
holds for all $u \in \mathcal{E}^*(\mathbb{R}^n)$, where $\mathcal{E}^*(\mathbb{R}^n)$ is the set of all distributions with compact support, cf. \cite{M00}. A paramatrix is a {\it fundamental solution} for $\mathcal{P}-\lambda$, if the above equation holds with $\mathcal{K}_1 = \mathcal{K}_2 = 0$.

Let us denote the orthogonal projection onto $\ker(A_0 - \lambda)$ by $\widehat{P}_\lambda$ and set 
\begin{equation} \label{def_P_lambda}
  P_\lambda := I - \widehat{P}_\lambda.
\end{equation}
Note that $P_\lambda = I$ for $\lambda \in \rho(A_0)$ and if $\{ e_1, \dots e_N \}$, $N := \dim \ker (A_0 - \lambda)$, is a basis of $\ker (A_0-\lambda)$ for $\lambda \in \sigma_\text{disc}(A_0)$, then
\begin{equation*}
  \widehat{P}_\lambda f = \sum_{k=1}^N (f, e_k)_{L^2(\mathbb{R}^n)} e_k = \int_{\mathbb{R}^n} K(\cdot,y) f(y) \text{d} y, \quad K(x,y) := \sum_{k=1}^N e_k(x) \overline{e_k(y)},
\end{equation*}
for all $f \in L^2(\mathbb{R}^n)$.
We remark that the integral kernel $K$ is a $C^\infty$-function by elliptic regularity \cite[Theorem~4.20]{M00}. By the spectral theorem we have that $A_0 - \lambda$ is boundedly invertible in $P_\lambda(L^2(\mathbb{R}^n))$. Therefore, the map 
\begin{equation} \label{def_R_lambda}
  \mathcal{G}(\lambda) := P_\lambda (A_0 - \lambda)^{-1} P_\lambda
\end{equation}
is bounded in $L^2(\mathbb{R}^n)$, and it
is a paramatrix for $\mathcal{P} - \lambda$, as 
\begin{equation} \label{equation_paramatrix}
  (\mathcal{P} - \lambda) P_\lambda (A_0 - \lambda)^{-1} P_\lambda f = P_\lambda (A_0 - \lambda)^{-1} P_\lambda (\mathcal{P} - \lambda) f =  P_\lambda f = f - \widehat{P}_\lambda f
\end{equation}
holds for all $f \in C_0^\infty(\mathbb{R}^n)$.
Therefore, by \cite[Theorem~6.3 and Corollary~6.5]{M00} there exists an integral kernel $G(\lambda; x,y)$ such that for almost every $x \in \mathbb{R}^n$
\begin{equation} \label{resolvent_integral}
  \mathcal{G}(\lambda) f(x) = \int_{\mathbb{R}^n} G(\lambda; x,y) f(y) \text{d} y, \quad f \in L^2(\mathbb{R}^n).
\end{equation}
In the following proposition we show some additional mapping properties of $\mathcal{G}(\lambda)$ for $\lambda \in \rho(A_0) \cup \sigma_\text{disc}(A_0)$; they are standard and well-known, but for completeness we give the proof of this proposition.

\begin{prop} \label{proposition_resolvent}
  Let $A_0$ be defined by~\eqref{def_free_Op}, let $\lambda \in \rho(A_0) \cup \sigma_\textup{disc}(A_0)$, and let $\mathcal{G}(\lambda)$ be given by~\eqref{def_R_lambda}. Then, for any $s \in [-2, 0]$ the mapping $\mathcal{G}(\lambda)$ can be extended to a bounded operator
  \begin{equation} \label{mapping_property_s}
    \mathcal{G}(\lambda): H^s(\mathbb{R}^n) \rightarrow H^{s+2}(\mathbb{R}^n).
  \end{equation}
  Moreover, the map 
  \begin{equation*}
    \rho(A_0) \ni \lambda \mapsto (A_0-\lambda)^{-1}
  \end{equation*}
  is holomorphic in $\mathcal{B}(H^s(\mathbb{R}^n), H^{s+2}(\mathbb{R}^n))$.
\end{prop}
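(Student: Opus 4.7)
The plan is to establish the mapping property~\eqref{mapping_property_s} first at the two endpoints $s=0$ and $s=-2$ and then interpolate, and to establish holomorphy by combining the resolvent identity with elliptic regularity.

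\textbf{Endpoint $s=0$.} For fixed $\lambda \in \rho(A_0) \cup \sigma_\textup{disc}(A_0)$, the definition~\eqref{def_R_lambda} shows that $\ran \mathcal{G}(\lambda) \subset \ran(A_0-\lambda)^{-1}P_\lambda \subset \dom A_0 = H^2(\mathbb{R}^n)$, using Lemma~\ref{lemma_free_Op} and the fact that $P_\lambda$ preserves $\dom A_0$ (since $\widehat{P}_\lambda$ is an integral operator with a smooth, compactly supported kernel in $y$, so it maps $L^2(\mathbb{R}^n)$ into $C^\infty \cap H^2$). The closed graph theorem, applied to $\mathcal{G}(\lambda)$ viewed as a map $L^2(\mathbb{R}^n) \to H^2(\mathbb{R}^n)$, gives boundedness. (Closedness is immediate from the $L^2$-boundedness of $\mathcal{G}(\lambda)$ together with continuity of the inclusion $H^2 \hookrightarrow L^2$.)

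\textbf{Endpoint $s=-2$ by duality.} Since $A_0$ is self-adjoint and $\widehat{P}_\lambda$ is an orthogonal projection, one has $\mathcal{G}(\lambda)^* = \mathcal{G}(\overline{\lambda})$ as an operator in $L^2(\mathbb{R}^n)$. By the endpoint above, $\mathcal{G}(\overline{\lambda}) \in \mathcal{B}(L^2,H^2)$; its Banach space adjoint therefore belongs to $\mathcal{B}((H^2)^*, (L^2)^*) = \mathcal{B}(H^{-2}, L^2)$, and this adjoint agrees with $\mathcal{G}(\lambda)$ on the dense subspace $L^2 \subset H^{-2}$. Hence $\mathcal{G}(\lambda)$ extends uniquely to a bounded map $H^{-2}(\mathbb{R}^n) \to L^2(\mathbb{R}^n)$.

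\textbf{Intermediate $s \in (-2,0)$ by interpolation.} The Sobolev spaces $H^s(\mathbb{R}^n)$ form a complex interpolation scale, $[H^{s_0}(\mathbb{R}^n),H^{s_1}(\mathbb{R}^n)]_\theta = H^{(1-\theta)s_0+\theta s_1}(\mathbb{R}^n)$. Choosing $s_0=-2$, $s_1=0$ and $\theta = 1 + s/2 \in (0,1)$ in the domain and $s_0=0$, $s_1=2$ with the same $\theta$ in the target yields $\mathcal{G}(\lambda) \in \mathcal{B}(H^s, H^{s+2})$ for every $s\in(-2,0)$.

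\textbf{Holomorphy.} For $\lambda \in \rho(A_0)$ we have $\mathcal{G}(\lambda) = (A_0-\lambda)^{-1}$. Holomorphy in $\mathcal{B}(L^2)$ is standard, so it remains to upgrade the target space. The resolvent identity gives
\begin{equation*}
 \frac{(A_0-\lambda)^{-1} - (A_0-\lambda_0)^{-1}}{\lambda-\lambda_0}
 = (A_0-\lambda)^{-1}(A_0-\lambda_0)^{-1}.
\end{equation*}
The factor $(A_0-\lambda_0)^{-1}$ maps $L^2$ into $H^2$, and by the elliptic estimate $\|u\|_{H^2} \le C(\|A_0u\|_{L^2}+\|u\|_{L^2})$ (valid for $u\in H^2(\mathbb{R}^n)$ since $\mathcal{P}$ is strongly elliptic with bounded smooth coefficients) the map $L^2 \ni g \mapsto (A_0-\lambda)^{-1}g \in H^2$ is locally uniformly bounded in $\lambda$. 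Combining these and passing $\lambda \to \lambda_0$ shows the difference quotient converges in $\mathcal{B}(L^2,H^2)$ to $(A_0-\lambda_0)^{-2}$, which lies in $\mathcal{B}(L^2, H^2)$. Hence $\lambda \mapsto (A_0-\lambda)^{-1}$ is holomorphic as a $\mathcal{B}(L^2,H^2)$-valued map. Holomorphy in $\mathcal{B}(H^{-2},L^2)$ follows by taking the $\lambda$-dependent adjoint, and holomorphy in $\mathcal{B}(H^s,H^{s+2})$ for the intermediate $s$ follows from the complex interpolation argument applied to the difference quotient (equivalently, from the fact that weak holomorphy propagates through the interpolation functor).

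\textbf{Main obstacle.} The routine part is the computation with the resolvent identity; the one step that needs care is the duality identification in the $s=-2$ endpoint, where one must use the self-adjointness $\mathcal{G}(\lambda)^*=\mathcal{G}(\overline\lambda)$ and the standard Gelfand-triple identification $(H^2)^* \cong H^{-2}$ (via the $L^2$ pivot) to make sure the extension of $\mathcal{G}(\lambda)$ obtained by dualization coincides with the natural one on the overlap.
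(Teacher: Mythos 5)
Your proof is correct and follows essentially the same route as the paper's: closed graph theorem at the endpoint $s=0$, duality for $s=-2$, complex interpolation in between, and the resolvent identity for holomorphy (the paper obtains the local uniform boundedness of $(A_0-\lambda)^{-1}$ in $\mathcal{B}(L^2(\mathbb{R}^n),H^2(\mathbb{R}^n))$ via a Neumann series rather than your elliptic a priori estimate, but this is the same idea). One cosmetic slip: the kernel of $\widehat{P}_\lambda$ is smooth but in general not compactly supported in $y$; what you actually need, and what is true, is simply that $\ran \widehat{P}_\lambda = \ker(A_0-\lambda) \subset \dom A_0 = H^2(\mathbb{R}^n)$.
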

\begin{proof}
  Assume that $\lambda \in \rho(A_0) \cup \sigma_\textup{disc}(A_0)$ is fixed. First, we show that 
  \begin{equation} \label{mapping_properties_resolvent}
    \mathcal{G}(\lambda): L^2(\mathbb{R}^n) \rightarrow H^2(\mathbb{R}^n)
  \end{equation}
  is bounded. The operator in~\eqref{mapping_properties_resolvent} is well-defined, as $\ran \mathcal{G}(\lambda) = \ran P_\lambda (A_0-\lambda)^{-1} P_\lambda = P_\lambda \dom (A_0-\lambda)  \subset H^2(\mathbb{R}^n)$. Moreover, we claim that the operator in~\eqref{mapping_properties_resolvent} is closed, then it is also bounded by the closed graph theorem. Let $(f_n) \subset L^2(\mathbb{R}^n)$ be a sequence and let $f \in L^2(\mathbb{R}^n)$ and $g \in H^2(\mathbb{R}^n)$ be such that
  \begin{equation*}
    f_n \rightarrow f \quad \text{in } L^2(\mathbb{R}^n) \quad \text{and} \quad \mathcal{G}(\lambda) f_n \rightarrow g \quad \text{in } H^2(\mathbb{R}^n).
  \end{equation*}
  Since $\mathcal{G}(\lambda)$ is bounded in $L^2(\mathbb{R}^n)$, we get $\mathcal{G}(\lambda) f_n \rightarrow \mathcal{G}(\lambda) f$ in $L^2(\mathbb{R}^n)$. Moreover, as $H^2(\mathbb{R}^n)$ is continuously embedded in $L^2(\mathbb{R}^n)$, we also have
  \begin{equation*}
    \mathcal{G}(\lambda) f_n \rightarrow g \quad \text{in } L^2(\mathbb{R}^n).
  \end{equation*}
  Hence, we conclude $\mathcal{G}(\lambda) f=g$, which shows that the operator in~\eqref{mapping_properties_resolvent} is closed and thus, bounded.
  
  Since the operator in~\eqref{mapping_properties_resolvent} is bounded for any $\lambda \in \rho(A_0) \cup \sigma_\text{disc}(A_0)$, we conclude by duality that also
  \begin{equation*}
    \mathcal{G}(\lambda): H^{-2}(\mathbb{R}^n) \rightarrow L^2(\mathbb{R}^n)
  \end{equation*}
  is bounded. Therefore, interpolation yields that the mapping property~\eqref{mapping_property_s} holds also for all $s \in (-2, 0)$.
  
  In order to show that $\lambda \mapsto (A_0 - \lambda)^{-1}$ is holomorphic in $\mathcal{B}(H^s(\mathbb{R}^n), H^{s+2}(\mathbb{R}^n))$ for any $s\in[-2,0]$ in a fixed point $\lambda_0 \in \rho(A_0)$, we note that the resolvent identity implies
  \begin{equation*}
    \big[1 - (\lambda-\lambda_0) (A_0-\lambda_0)^{-1}\big] (A_0-\lambda)^{-1} = (A_0-\lambda_0)^{-1}.
  \end{equation*}
  If $\lambda$ is close to $\lambda_0$, we deduce from the Neumann formula that $1 - (\lambda-\lambda_0) (A_0-\lambda_0)^{-1}$ is boundedly invertible in $H^{s+2}(\mathbb{R}^n)$ and hence,
  \begin{equation*}
     (A_0-\lambda)^{-1} = \big[1 - (\lambda-\lambda_0) (A_0-\lambda_0)^{-1}\big]^{-1} (A_0-\lambda_0)^{-1}.
  \end{equation*}
  In particular, $(A_0-\lambda)^{-1}$ is uniformly bounded in $\mathcal{B}(H^s(\mathbb{R}^n), H^{s+2}(\mathbb{R}^n))$ for $\lambda$ belonging to a small neighborhood of $\lambda_0$ and continuous in $\lambda$. Employing this and once more the resolvent identity
  \begin{equation*}
    (A_0-\lambda)^{-1} - (A_0-\lambda_0)^{-1} = (\lambda - \lambda_0) (A_0-\lambda)^{-1} (A_0 - \lambda_0)^{-1},
  \end{equation*}
  we find that $\rho(A_0)\ni\lambda \mapsto (A_0-\lambda)^{-1}$ is holomorphic in $\mathcal{B}(H^s(\mathbb{R}^n), H^{s+2}(\mathbb{R}^n))$.
\end{proof}

\subsection{Surface potentials associated to $\mathcal{P}$} \label{section_surface_potentials}

In this section we introduce several families of integral operators associated to the paramatrix $\mathcal{G}(\lambda)$ which will be of importance in the study of $A_\alpha$ and $B_\beta$ and for the numerical calculation of their eigenvalues. Remark that many of the properties shown below are well known for special realizations of $\mathcal{P}$, for instance $\mathcal{P} = - \Delta$, but for completeness we also provide the proofs for general $\mathcal{P}$.

Throughout this section assume that $\Sigma$ is the boundary of a bounded Lipschitz domain $\Omega_\text{i}$, set $\Omega_\text{e} := \mathbb{R}^n \setminus \overline{\Omega_\text{i}}$, and let $\nu$ be the unit normal to $\Omega_\text{i}$. 
If $f$ is a function defined on $\mathbb{R}^n$, then in the following we will often use the notations $f_\text{i} := f \upharpoonright \Omega_\text{i}$ and $f_\text{e} := f \upharpoonright \Omega_\text{e}$.

Recall that the Dirichlet trace operator $\gamma: H^1(\mathbb{R}^n) \rightarrow H^{1/2}(\Sigma)$ is bounded by~\eqref{Dirichlet_trace}. Hence, it has a bounded dual $\gamma^*: H^{-1/2}(\Sigma) \rightarrow H^{-1}(\mathbb{R}^n)$. This allows us to define for $\lambda \in \rho(A_0) \cup \sigma_\text{disc}(A_0)$ the {\it single layer potential}
\begin{equation} \label{def_single_layer_potential}
  \text{SL}(\lambda) := \mathcal{G}(\lambda) \gamma^*: H^{-1/2}(\Sigma) \rightarrow H^1(\mathbb{R}^n).
\end{equation}
By the mapping properties of $\gamma^*$ and Proposition~\ref{proposition_resolvent} the map $\text{SL}(\lambda)$ is well-defined and bounded. Moreover, we have $\ran \text{SL}(\lambda) \subset \ran P_\lambda = L^2(\mathbb{R}^n) \ominus \ker (A_0-\lambda)$. With the help of~\eqref{resolvent_integral} and duality, it is not difficult to show that $\text{SL}(\lambda)$ acts on functions $\varphi \in L^2(\Sigma)$ and almost every $x \in \mathbb{R}^n \setminus \Sigma$ as
\begin{equation*}
  \text{SL}(\lambda) \, \varphi(x) = \int_\Sigma G(\lambda; x,y) \varphi(y) \text{d} \sigma(y).
\end{equation*}
Some further properties of $\text{SL}(\lambda)$ are collected in the following lemma. In particular, the map $\text{SL}(\lambda)$ plays an important role to construct eigenfunctions of the operator $A_\alpha$ defined in~\eqref{def_A_alpha_intro}. For that, we prove in the lemma below the correspondence of the range of $\text{SL}(\lambda)$ with all solutions $f \in H^1_\mathcal{P}(\mathbb{R}^n \setminus \Sigma)$ of the equation
\begin{equation*}
  (\mathcal{P} - \lambda) f = 0 \quad \text{in } \mathbb{R}^n \setminus \Sigma \quad \text{and} \quad \gamma f_\text{i} = \gamma f_\text{e}. 
\end{equation*}
For this purpose we define for $\lambda \in \rho(A_0) \cup \sigma_\text{disc}(A_0)$ the set
\begin{equation} \label{def_M_lambda}
  \mathcal{M}_\lambda := \{ \varphi \in H^{-1/2}(\Sigma): ( \varphi, \gamma f ) = 0 ~\forall f \in \ker(A_0 - \lambda) \}.
\end{equation}
We remark that $\mathcal{M}_\lambda = H^{-1/2}(\Sigma)$ for $\lambda \in \rho(A_0)$.

\begin{lem} \label{lemma_single_layer_potential}
  Let $\textup{SL}(\lambda)$, $\lambda \in \rho(A_0) \cup \sigma_\textup{disc}(A_0)$, be defined by~\eqref{def_single_layer_potential}. Then the following is true:
  \begin{itemize}
    \item[(i)] We have $\ran \textup{SL}(\lambda) \subset H^1_\mathcal{P}(\mathbb{R}^n \setminus \Sigma)$ and
    \begin{equation} \label{range_single_layer_potential}
      \textup{SL}(\lambda)(\mathcal{M}_\lambda) \oplus \ker(A_0-\lambda) = \big\{ f \in H^1(\mathbb{R}^n): (\mathcal{P} - \lambda) f = 0 \text{ in } \mathbb{R}^n \setminus \Sigma \big\}.
    \end{equation}
    \item[(ii)] Let $\mathcal{B}_\nu$ be the conormal derivative defined by~\eqref{Green_extended}. Then for any $\varphi \in H^{-1/2}(\Sigma)$ the jump relations
    \begin{equation*}
      \gamma (\textup{SL}(\lambda)\,\varphi)_\textup{i} - \gamma (\textup{SL}(\lambda)\,\varphi)_\textup{e} = 0 \quad \text{and} \quad
      \mathcal{B}_\nu (\textup{SL}(\lambda)\,\varphi)_\textup{i} - \mathcal{B}_\nu (\textup{SL}(\lambda)\,\varphi)_\textup{e} = \varphi
    \end{equation*}
    hold.
    \item[(iii)] The map
    \begin{equation*}
      \rho(A_0) \ni \lambda \mapsto \textup{SL}(\lambda)
    \end{equation*}
    is holomorphic in $\mathcal{B}(H^{-1/2}(\Sigma), H^1_\mathcal{P}(\mathbb{R}^n \setminus \Sigma))$.
  \end{itemize}
\end{lem}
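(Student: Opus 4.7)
The three claims all hinge on the master identity
\begin{equation*}
(\mathcal{P}-\lambda)\textup{SL}(\lambda)\varphi \;=\; \gamma^*\varphi - \widehat{P}_\lambda\gamma^*\varphi \quad\text{in }H^{-1}(\mathbb{R}^n),
\end{equation*}
which I would obtain by extending the paramatrix relation~\eqref{equation_paramatrix} from $C_0^\infty(\mathbb{R}^n)$ to $H^{-1}(\mathbb{R}^n)$ using the mapping properties of Proposition~\ref{proposition_resolvent}; the finite-rank projection $\widehat{P}_\lambda f=\sum_k(f,e_k)e_k$ extends harmlessly to $H^{-1}(\mathbb{R}^n)$ because the eigenfunctions $e_k$ lie in $H^2(\mathbb{R}^n)$, so the $L^2$-pairing passes to the $H^{-1}\times H^1$ duality.

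For part~(i), the distribution $\gamma^*\varphi$ is supported on $\Sigma$, hence the master identity restricted to $\mathbb{R}^n\setminus\Sigma$ reads $\mathcal{P}\textup{SL}(\lambda)\varphi=\lambda\,\textup{SL}(\lambda)\varphi-\widehat{P}_\lambda\gamma^*\varphi\in L^2(\mathbb{R}^n\setminus\Sigma)$, giving $\ran\textup{SL}(\lambda)\subset H^1_\mathcal{P}(\mathbb{R}^n\setminus\Sigma)$. For $\varphi\in\mathcal{M}_\lambda$ one has $\widehat{P}_\lambda\gamma^*\varphi=\sum_k(\varphi,\gamma e_k)e_k=0$, so $\textup{SL}(\lambda)\varphi$ belongs to the right-hand side of~\eqref{range_single_layer_potential}; the sum is direct because $\ran\textup{SL}(\lambda)\subset\ran P_\lambda=\ker(A_0-\lambda)^\perp$. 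For the reverse inclusion, given $f\in H^1(\mathbb{R}^n)$ with $(\mathcal{P}-\lambda)f=0$ on $\mathbb{R}^n\setminus\Sigma$, I would combine the Green splitting of part~(ii) (which yields $(\mathcal{P}-\lambda)f=\gamma^*\varphi$ with $\varphi:=\mathcal{B}_\nu f_\textup{i}-\mathcal{B}_\nu f_\textup{e}\in H^{-1/2}(\Sigma)$) with the companion identity $\mathcal{G}(\lambda)(\mathcal{P}-\lambda)f=f-\widehat{P}_\lambda f$ (derived on $H^2(\mathbb{R}^n)$ from $P_\lambda(A_0-\lambda)=(A_0-\lambda)P_\lambda$ on $\dom A_0$ and extended to $H^1(\mathbb{R}^n)$ by density) to obtain $f=\textup{SL}(\lambda)\varphi+\widehat{P}_\lambda f$. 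A short computation using $\Phi_{\mathbb{R}^n}[f,e]=\lambda(f,e)$ for $e\in\ker(A_0-\lambda)$ confirms $\varphi\in\mathcal{M}_\lambda$.

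Part~(ii) rests on the Green splitting. The first jump is immediate from $\textup{SL}(\lambda)\varphi\in H^1(\mathbb{R}^n)$; for the second, applying~\eqref{Green_extended} on $\Omega_\textup{i}$ and $\Omega_\textup{e}$ to $u:=\textup{SL}(\lambda)\varphi$ (with a sign flip on $\Omega_\textup{e}$ because the outward normal there is $-\nu$) yields
\begin{equation*}
(\mathcal{P}-\lambda)u \;=\; (\mathcal{P}-\lambda)(u_\textup{i}\oplus u_\textup{e}) + \gamma^*\bigl(\mathcal{B}_\nu u_\textup{i}-\mathcal{B}_\nu u_\textup{e}\bigr)\quad\text{in }H^{-1}(\mathbb{R}^n).
\end{equation*}
Comparing with the master identity and separating the $L^2$-regular bulk contributions (which agree, both being the restriction of $(\mathcal{P}-\lambda)u$ to $\mathbb{R}^n\setminus\Sigma$) from the singular parts supported on $\Sigma$ leaves $\gamma^*(\mathcal{B}_\nu u_\textup{i}-\mathcal{B}_\nu u_\textup{e})=\gamma^*\varphi$, from which the jump follows by injectivity of $\gamma^*$, itself a consequence of the surjectivity of $\gamma$ from~\eqref{Dirichlet_trace}. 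For part~(iii), on $\rho(A_0)$ I have $\widehat{P}_\lambda=0$ and thus $\textup{SL}(\lambda)=(A_0-\lambda)^{-1}\gamma^*$; Proposition~\ref{proposition_resolvent} together with the boundedness of $\gamma^*\colon H^{-1/2}(\Sigma)\to H^{-1}(\mathbb{R}^n)$ gives holomorphy into $H^1(\mathbb{R}^n)$, and the master identity on $\mathbb{R}^n\setminus\Sigma$ reduces to $\mathcal{P}\textup{SL}(\lambda)\varphi=\lambda\,\textup{SL}(\lambda)\varphi$, so the $L^2$-norm of $\mathcal{P}\textup{SL}(\lambda)\varphi$ is controlled by $|\lambda|\,\|\textup{SL}(\lambda)\varphi\|_{L^2}$ and the holomorphy of the difference quotients transfers automatically to the graph norm.

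The main obstacle I anticipate is the careful bookkeeping when $\lambda\in\sigma_\textup{disc}(A_0)$: both the master identity and the Green splitting contribute distributions concentrated on $\Sigma$ alongside $L^2$-bulk terms, and extracting the jump cleanly in part~(ii) requires verifying that the two bulk contributions coincide (both equal $-\widehat{P}_\lambda\gamma^*\varphi$) so that only the singular parts need to be compared. The restriction to $\mathcal{M}_\lambda$ in part~(i) is precisely what makes this decoupling consistent and what turns the formal decomposition $f=\textup{SL}(\lambda)\varphi+\widehat{P}_\lambda f$ into a genuine direct-sum decomposition of the solution space.
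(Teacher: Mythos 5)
Your proof is correct, and its backbone — the paramatrix identity $(\mathcal{P}-\lambda)\textup{SL}(\lambda)\varphi=-\widehat P_\lambda\gamma^*\varphi$ on $\mathbb{R}^n\setminus\Sigma$, the jump relations, and the identification $f-\textup{SL}(\lambda)\varphi\in\ker(A_0-\lambda)$ for the reverse inclusion in (i) — is the same as in the paper. Two local steps are routed differently. First, the paper simply cites McLean (Theorem~6.11) for the jump relations in (ii), whereas you rederive them by comparing the global distributional identity with the two-sided Green splitting and invoking injectivity of $\gamma^*$; this is a valid, self-contained alternative (the uniqueness of the bulk/singular decomposition you need follows by restricting to the open set $\mathbb{R}^n\setminus\Sigma$, as you indicate). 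Second, for the reverse inclusion in (i) the paper forms $h:=f-\textup{SL}(\lambda)\varphi$, uses the jump relations and the characterization~\eqref{condition_H2} to conclude $h\in\dom A_0$ with $(A_0-\lambda)h=0$, while you apply the companion identity $\mathcal{G}(\lambda)(\mathcal{P}-\lambda)f=f-\widehat P_\lambda f$ (extended from $H^2$ to $H^1$ by density) directly to $(\mathcal{P}-\lambda)f=\gamma^*\varphi$ and read off $f=\textup{SL}(\lambda)\varphi+\widehat P_\lambda f$. Your route is slightly more economical at that point, at the cost of having to justify the two density extensions of the paramatrix relations to $H^{-1}(\mathbb{R}^n)$ and $H^1(\mathbb{R}^n)$, which you do correctly via Proposition~\ref{proposition_resolvent}. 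Part (iii) coincides with the paper's argument. No gaps.
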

\begin{proof}
  (i)--(ii) Let $\{ e_1, \dots, e_N \}$ be a basis of $\ker(A_0 - \lambda)$ (we use the convention that this set is empty for $\lambda \in \rho(A_0)$).
  Since $\mathcal{G}(\lambda)$ is a paramatrix for $\mathcal{P}-\lambda$, the considerations in \cite[equation~(6.19)]{M00} and~\eqref{equation_paramatrix} imply for $\varphi \in H^{-1/2}(\Sigma)$ that 
  \begin{equation} \label{range_SL}
    (\mathcal{P}-\lambda) \text{SL}(\lambda) \varphi = -\widehat{P}_\lambda \gamma^* \varphi = -\sum_{j=1}^N ( \varphi, \gamma e_j ) e_j \quad \text{on} \quad \mathbb{R}^n \setminus \Sigma.
  \end {equation}
  This implies, in particular, that $\ran \text{SL}(\lambda) \subset H^1_\mathcal{P}(\mathbb{R}^n \setminus \Sigma)$ and hence, $\mathcal{B}_\nu (\text{SL}(\lambda) \varphi)_{\text{i}/\text{e}}$ is well-defined for $\varphi \in H^{-1/2}(\Sigma)$ by~\eqref{conormal_derivative_extension}. The jump relations in item~(ii) are shown in~\cite[Theorem~6.11]{M00}. 
  Furthermore,~\eqref{range_SL} implies $(\mathcal{P}-\lambda) \text{SL}(\lambda) \varphi  = 0$ in $\mathbb{R}^n \setminus \Sigma$ for $\varphi \in \mathcal{M}_\lambda$ and thus,
  \begin{equation} \label{range_single_layer_potential1}
    \textup{SL}(\lambda) (\mathcal{M}_\lambda) \oplus \ker(A_0-\lambda) \subset \big\{ f \in H^1(\mathbb{R}^n): (\mathcal{P} - \lambda) f = 0 \text{ in } \mathbb{R}^n \setminus \Sigma \big\}.
  \end{equation}
  
  Next, we verify the second inclusion in~\eqref{range_single_layer_potential}. Let $f \in H^1(\mathbb{R}^n) \cap H^1_\mathcal{P}(\mathbb{R}^n \setminus \Sigma)$ such that $(\mathcal{P} - \lambda) f = 0$ in $\mathbb{R}^n \setminus \Sigma$. Set $\varphi := \mathcal{B}_\nu f_\text{i} - \mathcal{B}_\nu f_\text{e} \in H^{-1/2}(\Sigma)$. We claim that $\varphi \in \mathcal{M}_\lambda$. For $\lambda \in \rho(A_0)$ this is clear by the definition of $\mathcal{M}_\lambda$ in \eqref{def_M_lambda}. For $\lambda \in \sigma_\textup{disc}(A_0) \subset \mathbb{R}$ we get with~\eqref{Green_extended} applied  in $\Omega_\text{i}$ and $\Omega_\text{e}$ (note that $\nu$ is pointing outside $\Omega_\text{i}$ and inside $\Omega_\text{e}$) for any $g \in \ker(A_0-\lambda) \subset H^2(\mathbb{R}^n)$
  \begin{equation*}
    \begin{split}
      ( \varphi, \gamma g ) &= (\mathcal{B}_\nu f_\text{i} - \mathcal{B}_\nu f_\text{e}, \gamma g) - (\gamma f, \mathcal{B}_\nu g_\text{i} - \mathcal{B}_\nu g_\text{e}) \\
      &= (f, \mathcal{P} g)_{L^2(\mathbb{R}^n)} - (\mathcal{P} f, g)_{L^2(\mathbb{R}^n)}  
      =(f, \lambda g)_{L^2(\mathbb{R}^n)} - (\lambda f, g)_{L^2(\mathbb{R}^n)} =0,
    \end{split}
  \end{equation*}
  which implies $\varphi \in \mathcal{M}_\lambda$.
  Next, consider the function $h := f - \text{SL}(\lambda) \varphi$. Then $h \in H^1(\mathbb{R}^n)$ and by~(ii) we have
  \begin{equation*}
    \mathcal{B}_\nu h_\text{i} - \mathcal{B}_\nu h_\text{e} = \mathcal{B}_\nu f_\text{i} - \mathcal{B}_\nu f_\text{e} - \big(\mathcal{B}_\nu (\text{SL}(\lambda) \varphi)_\text{i} - \mathcal{B}_\nu (\text{SL}(\lambda) \varphi)_\text{e}\big) = \varphi - \varphi = 0.
  \end{equation*}
  Hence, \eqref{condition_H2} yields $h \in \dom A_0$. Eventually, due to the properties of $f$ and $\text{SL}(\lambda) \varphi$ for $\varphi \in \mathcal{M}_\lambda$ we conclude
  \begin{equation*}
    \begin{split}
      (A_0 - \lambda) h &= (\mathcal{P} - \lambda) h_\text{i} \oplus (\mathcal{P} - \lambda) h_\text{e} \\
      &= (\mathcal{P} - \lambda) (f_\text{i} - (\text{SL}(\lambda) \varphi)_\text{i}) \oplus (\mathcal{P} - \lambda) (f_\text{e} - \text{SL}(\lambda) \varphi)_\text{e}) = 0.
    \end{split}
  \end{equation*}
  This gives $h = f - \text{SL}(\lambda) \varphi \in \ker(A_0 - \lambda)$. Therefore, we have also verified 
  \begin{equation} \label{range_single_layer_potential2}
    \big\{ f \in H^1(\mathbb{R}^n): (\mathcal{P} - \lambda) f = 0 \text{ in } \mathbb{R}^n \setminus \Sigma \big\} \subset \ran \textup{SL}(\lambda) \oplus \ker(A_0-\lambda).
  \end{equation}
  The inclusions in~\eqref{range_single_layer_potential1} and~\eqref{range_single_layer_potential2} imply finally~\eqref{range_single_layer_potential}.
  
  (iii) By the definition of $\text{SL}(\lambda)$ and Proposition~\ref{proposition_resolvent} we have that $\text{SL}(\lambda)$ is holomorphic in $\mathcal{B}(H^{-1/2}(\Sigma), H^1(\mathbb{R}^n))$. Since $\mathcal{P} \text{SL}(\lambda) \varphi = \lambda \text{SL}(\lambda) \varphi$ in $\mathbb{R}^n \setminus \Sigma$ for $\lambda \in \rho(A_0)$ by~(i), we find that the $H^1$-norm is equivalent to the norm in $H^1_\mathcal{P}(\mathbb{R}^n \setminus \Sigma)$ on $\ran \text{SL}(\lambda)$. Therefore, $\text{SL}(\lambda)$ is also holomorphic in $\mathcal{B}(H^{-1/2}(\Sigma), H^1_\mathcal{P}(\mathbb{R}^n \setminus \Sigma))$.
\end{proof}

Two important objects associated to $\text{SL}(\lambda)$ are the {\it single layer boundary integral operator} $\mathcal{S}(\lambda)$, which is defined by
\begin{equation} \label{def_single_layer_bdd_int_op}
  \mathcal{S}(\lambda) : H^{-1/2}(\Sigma) \rightarrow H^{1/2}(\Sigma), \quad \mathcal{S}(\lambda) \varphi = \gamma \text{SL}(\lambda) \varphi = \gamma \mathcal{G}(\lambda) \gamma^* \varphi,
\end{equation}
and the mapping $\mathcal{T}(\lambda)'$, which is given by
\begin{equation} \label{def_T_prime}
  \mathcal{T}(\lambda)' : H^{-1/2}(\Sigma) \rightarrow H^{-1/2}(\Sigma), \quad \mathcal{T}(\lambda)' \varphi = \mathcal{B}_\nu (\text{SL}(\lambda) \varphi)_\text{i} + \mathcal{B}_\nu (\text{SL}(\lambda) \varphi)_\text{e}.
\end{equation}
The operators $\mathcal{S}(\lambda)$ and $\mathcal{T}(\lambda)'$ have for a  density $\varphi \in L^2(\Sigma)$ and almost all $x \in \Sigma$ the integral representations
\begin{equation*}
  \mathcal{S}(\lambda) \, \varphi(x) = \int_\Sigma G(\lambda; x,y) \varphi(y) \text{d} \sigma(y)
\end{equation*}
and
\begin{equation*}
  \mathcal{T}(\lambda)' \, \varphi(x) = 2 \lim_{\varepsilon \searrow 0} \int_{\Sigma \setminus B(x,\varepsilon)} \mathcal{B}_{\nu, x} G(\lambda; x,y) \varphi(y) \text{d} \sigma(y).
\end{equation*}
Some further properties of $\mathcal{S}(\lambda)$ and $\mathcal{T}(\lambda)'$ are stated in the following lemma:

\begin{lem} \label{lemma_single_layer_bdd_int_op}
  Let $\mathcal{S}(\lambda)$ and $\mathcal{T}(\lambda)'$, $\lambda \in \rho(A_0) \cup \sigma_\textup{disc}(A_0)$, be defined by~\eqref{def_single_layer_bdd_int_op} and~\eqref{def_T_prime}, respectively. Then, the following is true:
  \begin{itemize}
    \item[(i)] The restriction $\mathcal{S}_0(\lambda) := \mathcal{S}(\lambda) \upharpoonright L^2(\Sigma)$ has the mapping property $\mathcal{S}_0(\lambda): L^2(\Sigma) \rightarrow H^1(\Sigma)$. In particular, $\mathcal{S}_0(\lambda)$ is compact in $L^2(\Sigma)$.
    \item[(ii)] $\mathcal{S}(\lambda)$ is a Fredholm operator with index zero and there exist a compact operator $\mathcal{C}(\lambda):H^{-1/2}(\Sigma) \rightarrow H^{1/2}(\Sigma)$ and a constant $c(\lambda)>0$ such that
    \begin{equation*}
      \textup{Re}\, (\varphi, (\mathcal{S}(\lambda) + \mathcal{C}(\lambda)) \varphi)  \geq c(\lambda) \| \varphi \|_{H^{-1/2}(\Sigma)}^2
    \end{equation*}
    holds for all $\varphi \in H^{-1/2}(\Sigma)$.
    \item[(iii)] The maps
    \begin{equation*}
      \rho(A_0) \ni \lambda \mapsto \mathcal{S}(\lambda) \quad \text{and} \quad \rho(A_0) \ni \lambda \mapsto \mathcal{T}(\lambda)'
    \end{equation*}
    are holomorphic in $\mathcal{B}(H^{-1/2}(\Sigma), H^{1/2}(\Sigma))$ and $\mathcal{B}(H^{-1/2}(\Sigma))$, respectively.
    \item[(iv)] For any $\varphi \in H^{-1/2}(\Sigma)$ 
    \begin{equation*}
      \mathcal{B}_\nu (\textup{SL}(\lambda) \varphi)_\textup{i} = \frac{1}{2} (\varphi + \mathcal{T}(\lambda)' \varphi) \quad \text{and} \quad 
      \mathcal{B}_\nu (\textup{SL}(\lambda) \varphi)_\textup{e} = \frac{1}{2} (-\varphi + \mathcal{T}(\lambda)' \varphi)
    \end{equation*}
    hold.
  \end{itemize}
\end{lem}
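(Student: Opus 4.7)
The plan is to prove the four items roughly in reverse order, from the simplest to the most substantial.

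\textbf{Items (iv) and (iii).} Item (iv) is an algebraic consequence of Lemma~\ref{lemma_single_layer_potential}(ii) combined with the definition of $\mathcal{T}(\lambda)'$: adding $\mathcal{B}_\nu(\textup{SL}(\lambda)\varphi)_\textup{i} + \mathcal{B}_\nu(\textup{SL}(\lambda)\varphi)_\textup{e} = \mathcal{T}(\lambda)'\varphi$ to and subtracting from $\mathcal{B}_\nu(\textup{SL}(\lambda)\varphi)_\textup{i} - \mathcal{B}_\nu(\textup{SL}(\lambda)\varphi)_\textup{e} = \varphi$, and solving the resulting $2\times 2$ linear system, gives the two identities. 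For item (iii) I would use that $\mathcal{S}(\lambda) = \gamma \mathcal{G}(\lambda) \gamma^*$ and Proposition~\ref{proposition_resolvent} implies that $\lambda \mapsto \mathcal{G}(\lambda)$ is holomorphic in $\mathcal{B}(H^{-1}(\mathbb{R}^n), H^1(\mathbb{R}^n))$; pre- and post-composing with the bounded maps $\gamma^*$ and $\gamma$ preserves holomorphy. For $\mathcal{T}(\lambda)'$, holomorphy of $\textup{SL}(\lambda)$ in $\mathcal{B}(H^{-1/2}(\Sigma), H^1_\mathcal{P}(\mathbb{R}^n \setminus \Sigma))$ from Lemma~\ref{lemma_single_layer_potential}(iii), together with the boundedness of $\mathcal{B}_\nu$ from $H^1_\mathcal{P}(\Omega_{\textup{i}/\textup{e}})$ into $H^{-1/2}(\Sigma)$ recalled in~\eqref{conormal_derivative_extension}, gives the claim.

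\textbf{Item (i).} I would interpolate Proposition~\ref{proposition_resolvent} between the endpoints $\mathcal{G}(\lambda) : L^2(\mathbb{R}^n) \to H^2(\mathbb{R}^n)$ and $\mathcal{G}(\lambda) : H^{-2}(\mathbb{R}^n) \to L^2(\mathbb{R}^n)$ to obtain $\mathcal{G}(\lambda) : H^{-1/2}(\mathbb{R}^n) \to H^{3/2}(\mathbb{R}^n)$. Combining this with the boundedness of $\gamma^* : L^2(\Sigma) \to H^{-1/2}(\mathbb{R}^n)$, obtained by dualizing the standard trace $\gamma : H^{1/2}(\mathbb{R}^n) \to L^2(\Sigma)$ available on a Lipschitz hypersurface in $\mathbb{R}^n$, and with $\gamma : H^{3/2}(\mathbb{R}^n) \to H^1(\Sigma)$ from~\eqref{Dirichlet_trace}, yields $\mathcal{S}_0(\lambda) : L^2(\Sigma) \to H^1(\Sigma)$. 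The compactness of $\mathcal{S}_0(\lambda)$ on $L^2(\Sigma)$ then follows from the Rellich compactness of the embedding $H^1(\Sigma) \hookrightarrow L^2(\Sigma)$ on the compact boundary $\Sigma$.

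\textbf{Item (ii).} This is the substantial part. My strategy is to establish the coercivity at a single well-chosen $\lambda_0$ and transfer it to arbitrary $\lambda$ by a compact perturbation. Fix $\lambda_0 \in \mathbb{R}$ with $\lambda_0 < \inf \sigma(A_0)$, so that by strong ellipticity the shifted form satisfies $\Phi_{\mathbb{R}^n}[u,u] - \lambda_0 \|u\|_{L^2}^2 \geq c \|u\|_{H^1}^2$ for all $u \in H^1(\mathbb{R}^n)$. Applying the extended Green identity~\eqref{Green_extended} separately on $\Omega_\textup{i}$ and $\Omega_\textup{e}$ to $u = \textup{SL}(\lambda_0)\varphi$ and using the jump relation from Lemma~\ref{lemma_single_layer_potential}(ii), I would obtain
\begin{equation*}
(\varphi, \mathcal{S}(\lambda_0)\varphi) = \Phi_{\mathbb{R}^n}[\textup{SL}(\lambda_0)\varphi, \textup{SL}(\lambda_0)\varphi] - \lambda_0 \|\textup{SL}(\lambda_0)\varphi\|^2_{L^2(\mathbb{R}^n)} \geq c\, \|\textup{SL}(\lambda_0)\varphi\|^2_{H^1(\mathbb{R}^n)}.
\end{equation*}
On the other hand, writing $\varphi = \mathcal{B}_\nu(\textup{SL}(\lambda_0)\varphi)_\textup{i} - \mathcal{B}_\nu(\textup{SL}(\lambda_0)\varphi)_\textup{e}$ and using the boundedness of $\mathcal{B}_\nu : H^1_\mathcal{P}(\Omega_{\textup{i}/\textup{e}}) \to H^{-1/2}(\Sigma)$ together with $\mathcal{P}\textup{SL}(\lambda_0)\varphi = \lambda_0 \textup{SL}(\lambda_0)\varphi$ on $\mathbb{R}^n \setminus \Sigma$ produces the reverse estimate $\|\varphi\|_{H^{-1/2}(\Sigma)} \leq C\, \|\textup{SL}(\lambda_0)\varphi\|_{H^1(\mathbb{R}^n)}$. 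Combining the two yields $(\varphi, \mathcal{S}(\lambda_0)\varphi) \geq c' \|\varphi\|^2_{H^{-1/2}(\Sigma)}$.

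For general $\lambda \in \rho(A_0) \cup \sigma_\textup{disc}(A_0)$, I would exploit the resolvent identity to write $\mathcal{G}(\lambda) - \mathcal{G}(\lambda_0) = (\lambda - \lambda_0)\mathcal{G}(\lambda)\mathcal{G}(\lambda_0)$ modulo a finite-rank term coming from the spectral projection $\widehat{P}_\lambda$ (whose integral kernel is smooth by elliptic regularity, hence smoothing and compact between any Sobolev spaces on $\Sigma$). Factoring $\mathcal{S}(\lambda) - \mathcal{S}(\lambda_0) = \gamma(\mathcal{G}(\lambda) - \mathcal{G}(\lambda_0))\gamma^*$ through $H^{-1/2}(\Sigma) \to H^1(\mathbb{R}^n) \to H^2(\mathbb{R}^n) \to H^{3/2}(\Sigma)$ and using the Rellich embedding $H^{3/2}(\Sigma) \hookrightarrow H^{1/2}(\Sigma)$ shows that $\mathcal{C}(\lambda) := \mathcal{S}(\lambda_0) - \mathcal{S}(\lambda)$ is compact as $H^{-1/2}(\Sigma) \to H^{1/2}(\Sigma)$, giving the G\r{a}rding inequality. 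Finally, the Fredholm property with index zero is a standard consequence of the G\r{a}rding inequality for operators between a Hilbert space and its antidual. The main obstacle will be making the compact perturbation step rigorous, especially organizing the spectral projection corrections when $\lambda \in \sigma_\textup{disc}(A_0)$ and keeping track of the borderline Sobolev indices for the trace maps on the Lipschitz boundary.
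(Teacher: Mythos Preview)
Your treatment of items~(iii) and~(iv) matches the paper's exactly. For item~(ii) the paper simply cites \cite[Theorem~7.6]{M00}, while you sketch a direct proof via coercivity at a fixed $\lambda_0 < \inf\sigma(A_0)$ followed by a compact perturbation; this is essentially the strategy behind the cited result and is a valid alternative. The borderline index $H^{3/2}(\Sigma)$ you flag in the perturbation step is easily avoided by taking the trace into $H^{1-\varepsilon}(\Sigma)$ instead, which still gives compactness into $H^{1/2}(\Sigma)$.

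Item~(i), however, has a genuine gap. Your argument rests on two trace maps that both fail at the stated endpoints on a Lipschitz boundary. First, there is no bounded trace $\gamma: H^{1/2}(\mathbb{R}^n) \to L^2(\Sigma)$; this is precisely the critical Sobolev exponent where the trace theorem breaks down, so you cannot dualize to obtain $\gamma^*: L^2(\Sigma) \to H^{-1/2}(\mathbb{R}^n)$. Second, the trace bound~\eqref{Dirichlet_trace} in the paper is stated for $s\in(\tfrac12,\tfrac32)$ and does not cover $\gamma: H^{3/2}(\mathbb{R}^n)\to H^1(\Sigma)$. Shifting both endpoints by $\varepsilon$ gives a valid chain $L^2(\Sigma)\to H^{-1/2-\varepsilon}(\mathbb{R}^n)\to H^{3/2-\varepsilon}(\mathbb{R}^n)\to H^{1-\varepsilon}(\Sigma)$, which is enough for the compactness of $\mathcal{S}_0(\lambda)$ in $L^2(\Sigma)$ but does \emph{not} yield the full mapping property $\mathcal{S}_0(\lambda): L^2(\Sigma)\to H^1(\Sigma)$ asserted in the lemma. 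The paper obtains that sharper statement by invoking the analysis of the single layer operator in \cite[discussion after Theorem~6.12]{M00}, which uses the specific structure of the boundary integral operator rather than generic trace--resolvent--trace factorization.
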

\begin{proof}
  For the proof of the mapping property of $\mathcal{S}_0(\lambda)$ in~(i) we refer to the discussion after \cite[Theorem~6.12]{M00}, the compactness of $\mathcal{S}_0(\lambda)$ follows then from the fact that $H^{1}(\Sigma)$ is compactly embedded in $L^2(\Sigma)$. Statement~(ii) is shown in \cite[Theorem~7.6]{M00}. Item~(iii) is a consequence of Lemma~\ref{lemma_single_layer_potential}~(iii) and the mapping properties of $\gamma$ and $\mathcal{B}_\nu$, respectively. Finally, statement~(iv) follows immediately from Lemma~\ref{lemma_single_layer_potential}~(ii) and the definition of $\mathcal{T}(\lambda)'$ in~\eqref{def_T_prime}.
\end{proof}

Next, we define the double layer potential associated to $\mathcal{P} - \lambda$. For that we recall the definition of the conormal derivative $\mathcal{B}_\nu$ from~\eqref{def_B_nu} and note that $\mathcal{B}_\nu: H^2(\mathbb{R}^2) \rightarrow L^2(\Sigma)$ is bounded. Hence, it admits a dual $\mathcal{B}_\nu^*\in\mathcal{B}( L^2(\Sigma), H^{-2}(\mathbb{R}^n))$ and with the help of Proposition~\ref{proposition_resolvent} (applied for $s=-2$) we can define the {\it double layer potential} as the bounded operator
\begin{equation} \label{def_double_layer_potential}
  \text{DL}(\lambda) := \mathcal{G}(\lambda) \mathcal{B}_\nu^*: L^2(\Sigma) \rightarrow L^2(\mathbb{R}^n).
\end{equation}
Since $\ran \mathcal{G}(\lambda) \subset L^2(\mathbb{R}^n) \ominus \ker(A_0-\lambda)$, we have $\ran \text{DL}(\lambda) \subset L^2(\mathbb{R}^n) \ominus \ker(A_0-\lambda)$.
Using~\eqref{resolvent_integral} and duality it is not difficult to show that $\text{DL}(\lambda)$ acts on functions $\varphi \in L^2(\Sigma)$ and almost all $x \in \mathbb{R}^n \setminus \Sigma$ as
\begin{equation*}
  \text{DL}(\lambda) \, \varphi(x) = \int_\Sigma (\mathcal{B}_{\nu, y} G(\lambda; x,y)) \varphi(y) \text{d} \sigma(y).
\end{equation*}
Some further properties of $\text{DL}(\lambda)$ are collected in the following lemma. In particular, the map $\text{DL}(\lambda)$ plays an important role to construct eigenfunctions of the operator $B_\beta$ defined in~\eqref{def_B_beta_intro}. For that, we investigate the correspondence of the range of $\text{DL}(\lambda)$ with all solutions $f \in H^1_\mathcal{P}(\mathbb{R}^n \setminus \Sigma)$ of the equation
\begin{equation*}
  (\mathcal{P} - \lambda) f = 0 \quad \text{in } \mathbb{R}^n \setminus \Sigma \quad \text{and} \quad \mathcal{B}_\nu f_\text{i} = \mathcal{B}_\nu f_\text{e}. 
\end{equation*}
For this purpose we define for $\lambda \in \rho(A_0) \cup \sigma_\text{disc}(A_0)$ the set
\begin{equation} \label{def_N_lambda}
  \mathcal{N}_\lambda := \{ \varphi \in H^{1/2}(\Sigma): (\varphi, \mathcal{B}_\nu f) = 0 ~\forall f \in \ker(A_0 - \lambda) \}.
\end{equation}
We remark that $\mathcal{N}_\lambda = H^{1/2}(\Sigma)$ for $\lambda \in \rho(A_0)$. In analogy to Lemma~\ref{lemma_single_layer_potential} we have the following properties of $\text{DL}(\lambda)$.

\begin{lem} \label{lemma_double_layer_potential}
  Let $\textup{DL}(\lambda)$, $\lambda \in \rho(A_0) \cup \sigma_\textup{disc}(A_0)$, be defined by~\eqref{def_double_layer_potential}. Then the following is true:
  \begin{itemize}
    \item[(i)] The restriction of $\textup{DL}(\lambda)$ onto $H^{1/2}(\Sigma)$ gives rise to a bounded operator
    \begin{equation*}
      \textup{DL}(\lambda): H^{1/2}(\Sigma) \rightarrow H^1_\mathcal{P}(\mathbb{R}^n \setminus \Sigma) 
    \end{equation*}
    and 
    \begin{equation} \label{range_double_layer_potential}
      \begin{split}
      \textup{DL}(\lambda) (\mathcal{N}_\lambda) &\oplus \ker(A_0-\lambda) \\
      &= \big\{ f \in H^1_\mathcal{P}(\mathbb{R}^n \setminus \Sigma): \mathcal{B}_\nu f_\textup{i} = \mathcal{B}_\nu f_\textup{e},\, (\mathcal{P} - \lambda) f = 0 \text{ in } \mathbb{R}^n \setminus \Sigma \big\}.
      \end{split}
    \end{equation}
    \item[(ii)] Let $\mathcal{B}_\nu$ be the conormal derivative defined by~\eqref{Green_extended}. Then for any $\varphi \in H^{1/2}(\Sigma)$ the jump relations
    \begin{equation*}
      \gamma (\textup{DL}(\lambda)\,\varphi)_\textup{e} - \gamma (\textup{DL}(\lambda)\,\varphi)_\textup{i} = \varphi \quad \text{and} \quad
      \mathcal{B}_\nu (\textup{DL}(\lambda)\,\varphi)_\textup{i} - \mathcal{B}_\nu (\textup{DL}(\lambda)\,\varphi)_\textup{e} = 0
    \end{equation*}
    hold.
    \item[(iii)] The map
    \begin{equation*}
      \rho(A_0) \ni \lambda \mapsto \textup{DL}(\lambda)
    \end{equation*}
    is holomorphic in $\mathcal{B}(H^{1/2}(\Sigma), H^1_\mathcal{P}(\mathbb{R}^n \setminus \Sigma))$.
  \end{itemize}
\end{lem}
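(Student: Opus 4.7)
The proof will parallel Lemma~\ref{lemma_single_layer_potential} throughout, with the roles of the Dirichlet trace and the conormal derivative exchanged. I would dispatch (ii) first by appealing to the layer potential theory for strongly elliptic operators on Lipschitz boundaries in \cite[Chapter~6]{M00} (the companion result to the one used for the single layer in \cite[Theorem~6.11]{M00}); that reference also provides the baseline one-sided regularity $\textup{DL}(\lambda)\varphi \in H^1(\Omega_\textup{i}) \oplus H^1(\Omega_\textup{e})$ for $\varphi \in H^{1/2}(\Sigma)$, which is needed to make sense of the jump relations and which is the one place where the Lipschitz geometry of $\Sigma$ and the smoothness of the coefficients of $\mathcal{P}$ really enter.

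With (ii) in hand, item (i) splits in two. For the forward inclusion in \eqref{range_double_layer_potential}, I start from the paramatrix identity \eqref{equation_paramatrix}: writing $\textup{DL}(\lambda) = \mathcal{G}(\lambda)\mathcal{B}_\nu^*$ I obtain, exactly as in \eqref{range_SL},
\begin{equation*}
(\mathcal{P}-\lambda)\,\textup{DL}(\lambda)\varphi \;=\; \mathcal{B}_\nu^*\varphi - \widehat{P}_\lambda \mathcal{B}_\nu^*\varphi \;=\; \mathcal{B}_\nu^*\varphi - \sum_{j=1}^N (\varphi,\mathcal{B}_\nu e_j)\, e_j,
\end{equation*}
with $\{e_1,\dots,e_N\}$ a basis of $\ker(A_0-\lambda)$. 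Since $\mathcal{B}_\nu^*\varphi$ is supported on $\Sigma$, only the finite sum survives on $\mathbb{R}^n\setminus\Sigma$, and it vanishes exactly for $\varphi \in \mathcal{N}_\lambda$, which together with (ii) yields the inclusion ``$\subset$'' in \eqref{range_double_layer_potential}. For the converse, given $f$ as on the right-hand side, I set $\varphi := \gamma f_\textup{e} - \gamma f_\textup{i} \in H^{1/2}(\Sigma)$ and verify $\varphi \in \mathcal{N}_\lambda$ by applying \eqref{Green_extended} in $\Omega_\textup{i}$ and $\Omega_\textup{e}$ separately against any $g \in \ker(A_0-\lambda) \subset H^2(\mathbb{R}^n)$: the hypothesis $\mathcal{B}_\nu f_\textup{i} = \mathcal{B}_\nu f_\textup{e}$ cancels the $\mathcal{B}_\nu f$-contribution and the continuity of $\gamma g$ across $\Sigma$ collects the remaining boundary terms into $(\varphi,\mathcal{B}_\nu g)$, which equals $(f,\mathcal{P}g)_{L^2(\mathbb{R}^n)} - (\mathcal{P}f,g)_{L^2(\mathbb{R}^n)} = 0$. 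Then $h := f - \textup{DL}(\lambda)\varphi$ has, by (ii) and \eqref{condition_H2}, matching Dirichlet traces and conormal derivatives across $\Sigma$, hence lies in $\dom A_0$, and $(A_0-\lambda)h = 0$, so that $h \in \ker(A_0-\lambda)$, closing \eqref{range_double_layer_potential}.

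Item (iii) will follow exactly as in Lemma~\ref{lemma_single_layer_potential}(iii): holomorphy of $\textup{DL}(\lambda)$ in $\mathcal{B}(H^{1/2}(\Sigma), H^1(\Omega_\textup{i})\oplus H^1(\Omega_\textup{e}))$ will be deduced from the representation $\textup{DL}(\lambda) = \mathcal{G}(\lambda)\mathcal{B}_\nu^*$ and Proposition~\ref{proposition_resolvent}, and then the identity $\mathcal{P}\,\textup{DL}(\lambda)\varphi = \lambda\,\textup{DL}(\lambda)\varphi$ on $\mathbb{R}^n\setminus\Sigma$, which for $\lambda \in \rho(A_0)$ and $\varphi \in \mathcal{N}_\lambda = H^{1/2}(\Sigma)$ holds by (i), shows that the $H^1$- and $H^1_\mathcal{P}$-norms are equivalent on $\ran \textup{DL}(\lambda)$, upgrading the holomorphy to $\mathcal{B}(H^{1/2}(\Sigma), H^1_\mathcal{P}(\mathbb{R}^n\setminus\Sigma))$. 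The main obstacle I anticipate is the rigorous upgrade of the easy $L^2(\Sigma) \to L^2(\mathbb{R}^n)$ mapping produced by the bare definition of $\textup{DL}(\lambda)$ to the $H^{1/2}(\Sigma) \to H^1(\Omega_\textup{i}) \oplus H^1(\Omega_\textup{e})$ mapping relied upon throughout (i)--(iii): the direct pairing $\mathcal{B}_\nu^*: L^2(\Sigma) \to H^{-2}(\mathbb{R}^n)$ combined with Proposition~\ref{proposition_resolvent} at $s=-2$ is too weak, and closing this gap is precisely what forces the appeal to the layer potential theory of \cite[Chapter~6]{M00} (or, equivalently, to a transmission-problem reformulation of $\textup{DL}(\lambda)\varphi$ with Dirichlet-jump data $\varphi$ and vanishing conormal jump).
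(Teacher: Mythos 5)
Your proposal matches the paper's proof in structure and in all essential steps: item (ii) is quoted from \cite[Theorem~6.11]{M00}; the inclusion ``$\supset$'' in \eqref{range_double_layer_potential} uses the paramatrix identity to get $(\mathcal{P}-\lambda)\textup{DL}(\lambda)\varphi=-\widehat{P}_\lambda\mathcal{B}_\nu^*\varphi$ off $\Sigma$; the converse sets $\varphi:=\gamma f_\textup{e}-\gamma f_\textup{i}$, checks $\varphi\in\mathcal{N}_\lambda$ via \eqref{Green_extended}, and identifies $f-\textup{DL}(\lambda)\varphi$ as an element of $\ker(A_0-\lambda)$ via \eqref{condition_H2} --- exactly the paper's argument (which it carries out for $\textup{SL}$ and then transfers). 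Your Green's-identity computation and sign bookkeeping are correct.

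The one place where you are too quick is the boundedness $\textup{DL}(\lambda):H^{1/2}(\Sigma)\to H^1_\mathcal{P}(\mathbb{R}^n\setminus\Sigma)$. You correctly identify this as the crux and correctly note that the bare composition $\mathcal{G}(\lambda)\mathcal{B}_\nu^*$ through $H^{-2}(\mathbb{R}^n)$ is too weak, but your claim that \cite[Chapter~6]{M00} directly ``provides the baseline one-sided regularity $\textup{DL}(\lambda)\varphi\in H^1(\Omega_\textup{i})\oplus H^1(\Omega_\textup{e})$'' glosses over the unboundedness of $\Omega_\textup{e}$: Theorem~6.11 of \cite{M00} controls the potential only near $\Sigma$. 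The paper closes this by writing $\textup{DL}(\lambda)\varphi=\chi\,\textup{DL}(\lambda)\varphi+(1-\chi)\textup{DL}(\lambda)\varphi$ with a cutoff $\chi$ equal to $1$ near $\overline{\Omega_\textup{i}}$, invoking \cite[Theorem~6.11]{M00} for the first summand and showing $\mathcal{P}(1-\chi)\textup{DL}(\lambda)\varphi\in L^2(\mathbb{R}^n)$ by the product rule, whence $(1-\chi)\textup{DL}(\lambda)\varphi\in H^2(\mathbb{R}^n)$ by elliptic regularity; the closed graph theorem then yields boundedness. For item (iii) the paper argues via the resolvent identity, which exhibits $\textup{DL}(\lambda)-\textup{DL}(\lambda_0)=(\lambda_0-\lambda)(A_0-\lambda_0)^{-1}(A_0-\lambda)^{-1}\mathcal{B}_\nu^*$ as bounded from $L^2(\Sigma)$ into $H^2(\mathbb{R}^n)\subset H^1_\mathcal{P}(\mathbb{R}^n\setminus\Sigma)$, so no separate norm-equivalence upgrade is needed; your two-step route (holomorphy in $H^1$, then equivalence of the $H^1$- and $H^1_\mathcal{P}$-norms on the range) also works but would in any case rest on the same resolvent identity, so the two arguments coincide in substance.
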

\begin{proof}
  The proofs of many statements of this lemma are analogous to the ones in Lemma~\ref{lemma_single_layer_potential}, so we point out only the main differences.
  Since $\mathcal{G}(\lambda)$ is a paramatrix for $\mathcal{P}-\lambda$, the considerations in \cite[equation~(6.19)]{M00} and~\eqref{equation_paramatrix} imply for $\varphi \in H^{1/2}(\Sigma)$ that 
  \begin{equation} \label{range_DL}
    (\mathcal{P}-\lambda) \text{DL}(\lambda) \varphi = -\widehat{P}_\lambda \mathcal{B}_\nu^* \varphi  \quad \text{on} \quad \mathbb{R}^n \setminus \Sigma.
  \end {equation}
  In particular, $\mathcal{P} (\text{DL} \varphi)_{\text{i}/\text{e}} \in L^2(\Omega_{\text{i}/\text{e}})$.
  Next, we show that
  $\textup{DL}(\lambda): H^{1/2}(\Sigma) \rightarrow H^1_\mathcal{P}(\mathbb{R}^n \setminus \Sigma)$ is bounded. Using the last observation and the closed graph theorem it is enough to verify 
  \begin{equation} \label{range_double_layer}
    \text{DL}(\lambda) \varphi \in H^1(\mathbb{R}^n\setminus \Sigma) \quad \text{for} \quad \varphi \in H^{1/2}(\Sigma);
  \end{equation}
  cf. the proof of~\eqref{mapping_properties_resolvent} for a similar argument. To prove~\eqref{range_double_layer} choose $R>0$ such that $\overline{\Omega_\text{i}}$ is contained in the open ball $B(0,R)$ of radius $R$ centered at the origin and a cutoff function $\chi \in C^\infty(\mathbb{R}^n)$ which is supported in $B(0, R+1)$ and satisfies $\chi \upharpoonright B(0,R) \equiv 1$. Moreover, let $\varphi \in H^{1/2}(\Sigma)$ be fixed. Then $\chi \text{DL}(\lambda) \varphi \in H^1(\mathbb{R}^n \setminus \Sigma)$ by \cite[Theorem~6.11]{M00}. Furthermore, $(1-\chi) \text{DL}(\lambda) \varphi$ belongs to $L^2(\mathbb{R}^n)$ and by the product rule we have
  \begin{equation*}
    \begin{split}
      \mathcal{P} &(1-\chi) \text{DL}(\lambda) \varphi = (1-\chi) \mathcal{P} \text{DL}(\lambda) \varphi \\
      &-\sum_{j,k=1}^n \big[ a_{j k} (\partial_k (1-\chi)) (\partial_j \text{DL}(\lambda) \varphi) + \text{DL}(\lambda) \varphi \partial_k(a_{j k} \partial_j (1-\chi) ) \\
      &\qquad \qquad + a_{jk} (\partial_j (1-\chi)) (\partial_k \text{DL}(\lambda) \varphi)\big] \\
      &\qquad \qquad+ \textup{DL}(\lambda) \varphi \sum_{j=1}^n [a_j \partial_j (1-\chi) - \overline{a_j} \partial_j (1-\chi)].
    \end{split}
  \end{equation*}
  Since $\text{supp}\, \nabla (1-\chi) = \text{supp}\, \nabla \chi \subset B(0,R+1)$, we have again with the help of \cite[Theorem~6.11]{M00} that $ (\partial_k (1-\chi)) (\partial_j \text{DL}(\lambda) \varphi) \in L^2(\mathbb{R}^n)$ and thus with $\mathcal{P} \text{DL}(\lambda) \varphi \in L^2(\mathbb{R}^n)$ and $a_j, a_{j k} \in C_b^\infty(\mathbb{R}^n)$ we obtain $\mathcal{P} (1-\chi) \text{DL}(\lambda) \varphi \in L^2(\mathbb{R}^2)$. Therefore, we conclude from elliptic regularity that $(1-\chi) \text{DL}(\lambda) \varphi \in H^2(\mathbb{R}^n)$. This implies eventually that
  \begin{equation*}
    \text{DL}(\lambda) \varphi = \chi \text{DL}(\lambda) \varphi + (1-\chi) \text{DL}(\lambda) \varphi \in H^1(\mathbb{R}^n \setminus \Sigma)
  \end{equation*}
  and thus~\eqref{range_double_layer}. 
  
  Next, item~(ii) is shown in~\cite[Theorem~6.11]{M00}. Furthermore, the relation \eqref{range_double_layer_potential} can be shown in the same way as~\eqref{range_single_layer_potential} using~\eqref{range_DL} instead of~\eqref{range_SL}.

  In order to prove statement~(iii), let $\lambda_0, \lambda \in \rho(A_0)$. Using the resolvent identity we have
  \begin{equation} \label{double_layer_resolvent_identity}
    \begin{split}
      \text{DL}(\lambda) - \text{DL}(\lambda_0) &= \big( (A_0-\lambda_0)^{-1} - (A_0-\lambda)^{-1} \big) \mathcal{B}_\nu^*\\
      &= (\lambda_0 - \lambda) (A_0-\lambda_0)^{-1} (A_0 - \lambda)^{-1} \mathcal{B}_\nu^*.
    \end{split}
  \end{equation}
  Since $(A_0-\lambda_0)^{-1} (A_0 - \lambda)^{-1} \in \mathcal{B}(H^{-2}(\mathbb{R}^n), H^2(\mathbb{R}^n))$ is continuous in $\lambda$ in this topology, see Proposition~\ref{proposition_resolvent}, we conclude that $\text{DL}(\lambda):H^{1/2}(\Sigma) \rightarrow H^1_\mathcal{P}(\mathbb{R}^n \setminus \Sigma)$ is holomorphic.
\end{proof}

Two important objects associated to $\text{DL}(\lambda)$ are the {\it hypersingular boundary integral operator} $\mathcal{R}(\lambda)$, which is defined by
\begin{equation} \label{def_hypersing_bdd_int_op}
  \mathcal{R}(\lambda): H^{1/2}(\Sigma) \rightarrow H^{-1/2}(\Sigma), \quad \mathcal{R}(\lambda) \varphi = -\mathcal{B}_\nu \text{DL}(\lambda) \varphi = -\mathcal{B}_\nu \mathcal{G}(\lambda) \mathcal{B}_\nu^* \varphi,
\end{equation}
and the operator
\begin{equation} \label{def_T}
  \mathcal{T}(\lambda): H^{1/2}(\Sigma) \rightarrow H^{1/2}(\Sigma), \quad \mathcal{T}(\lambda) \varphi = \gamma (\text{DL}(\lambda) \varphi)_\text{i} + \gamma (\text{DL}(\lambda) \varphi)_\text{e}.
\end{equation}
It follows from Lemma~\ref{lemma_double_layer_potential}~(i) and~\eqref{conormal_derivative_extension} that $\mathcal{R}(\lambda)$ and $\mathcal{T}(\lambda)$ are well-defined and bounded.
While $\mathcal{T}(\lambda)$ has for a continuous density $\varphi \in C(\Sigma)$ and almost all $x \in \Sigma$ a representation as a strongly singular integral operator,
\begin{equation*}
  \mathcal{T}(\lambda) \, \varphi(x) = 2 \lim_{\varepsilon \searrow 0} \int_{\Sigma \setminus B(x,\varepsilon)} (\mathcal{B}_{\nu, y} G(\lambda; x,y)) \varphi(y) \text{d} \sigma(y),
\end{equation*}
the hypersingular operator $\mathcal{R}(\lambda)$ can be only written as finite part integral
\begin{equation*}
  \mathcal{R}(\lambda) \, \varphi(x) = -\textup{f.p.}_{\varepsilon \searrow 0} \int_{\Sigma \setminus B(x,\varepsilon)} \mathcal{B}_{\nu, x} (\mathcal{B}_{\nu, y} G(\lambda; x,y)) \varphi(y) \text{d} \sigma(y),
\end{equation*}
see \cite[Section~7]{M00} for details. However, for special realizations of $\mathcal{P}$ the duality product $(\mathcal{R}(\lambda) \varphi, \psi)$ can be computed in a more convenient way, cf. e.g. \cite[Theorem~8.21]{M00}.
Some further properties of $\mathcal{R}(\lambda)$ and $\mathcal{T}(\lambda)$ are stated in the following lemma:

\begin{lem} \label{lemma_hypersing_bdd_int_op}
  Let $\mathcal{R}(\lambda)$ and $\mathcal{T}(\lambda)$, $\lambda \in \rho(A_0) \cup \sigma_\textup{disc}(A_0)$, be defined by~\eqref{def_hypersing_bdd_int_op} and~\eqref{def_T}, respectively. Then, the following is true:
  \begin{itemize}
    \item[(i)] $\mathcal{R}(\lambda)$ is a Fredholm operator with index zero and there exist a compact operator $\mathcal{C}(\lambda):H^{1/2}(\Sigma) \rightarrow H^{-1/2}(\Sigma)$ and a constant $c(\lambda)>0$ such that
    \begin{equation*}
      \textup{Re}\, (\varphi, (\mathcal{R}(\lambda) + \mathcal{C}(\lambda)) \varphi)  \geq c(\lambda) \| \varphi \|_{H^{1/2}(\Sigma)}^2
    \end{equation*}
    holds for all $\varphi \in H^{1/2}(\Sigma)$.
    \item[(ii)] The maps
    \begin{equation*}
      \rho(A_0) \ni \lambda \mapsto \mathcal{R}(\lambda) \quad \text{and} \quad \rho(A_0) \ni \lambda \mapsto \mathcal{T}(\lambda)
    \end{equation*}
    are holomorphic in $\mathcal{B}(H^{1/2}(\Sigma), H^{-1/2}(\Sigma))$ and $\mathcal{B}(H^{1/2}(\Sigma))$, respectively.
    \item[(iii)] For any $\varphi \in H^{1/2}(\Sigma)$ 
    \begin{equation*}
      \gamma (\textup{DL}(\lambda) \varphi)_\textup{i} = \frac{1}{2} (-\varphi + \mathcal{T}(\lambda) \varphi) \quad \text{and} \quad 
      \gamma (\textup{DL}(\lambda) \varphi)_\textup{e} = \frac{1}{2} (\varphi + \mathcal{T}(\lambda) \varphi)
    \end{equation*}
    hold.
    \item[(iv)] For all $\lambda, \nu \in \rho(A_0)$ the difference $\mathcal{T}(\lambda) - \mathcal{T}(\nu)$ is compact. 
    \item[(v)] The relation
    \begin{equation*}
      (\varphi, \mathcal{T}(\lambda) \psi)
          = (\mathcal{T}(\overline{\lambda})' \varphi, \psi)
    \end{equation*} 
    holds for all $\varphi \in H^{-1/2}(\Sigma)$ and $\psi \in H^{1/2}(\Sigma)$.
  \end{itemize}
\end{lem}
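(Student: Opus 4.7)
The five claims split naturally into a routine algebraic identity~(iii), a holomorphy-by-composition argument~(ii), a regularity argument via a resolvent identity~(iv), a Green's-identity computation for the duality~(v), and a Gårding estimate~(i). I plan to treat them in the order (iii)--(ii)--(iv)--(v)--(i), so that the symmetry $\mathcal{R}(\overline{\lambda})^{*}=\mathcal{R}(\lambda)$, which comes out of the same method used for~(v), can be reused in~(i). Item~(iii) is immediate: the jump $\gamma(\textup{DL}(\lambda)\varphi)_{\textup{e}}-\gamma(\textup{DL}(\lambda)\varphi)_{\textup{i}}=\varphi$ from Lemma~\ref{lemma_double_layer_potential}(ii) and the defining identity $\gamma(\textup{DL}(\lambda)\varphi)_{\textup{i}}+\gamma(\textup{DL}(\lambda)\varphi)_{\textup{e}}=\mathcal{T}(\lambda)\varphi$ form a $2\times 2$ linear system which is solved by addition and subtraction. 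Item~(ii) combines Lemma~\ref{lemma_double_layer_potential}(iii), which gives $\textup{DL}(\lambda)$ holomorphic as a map $H^{1/2}(\Sigma)\to H^1_\mathcal{P}(\mathbb{R}^n\setminus\Sigma)$, with the boundedness of the two-sided Dirichlet trace~\eqref{Dirichlet_trace} and the conormal derivative~\eqref{conormal_derivative_extension}, so both $\mathcal{T}(\lambda)$ and $\mathcal{R}(\lambda)=-\mathcal{B}_\nu \textup{DL}(\lambda)$ inherit the holomorphy.

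For~(iv) I reuse the resolvent identity~\eqref{double_layer_resolvent_identity},
\begin{equation*}
\textup{DL}(\lambda)-\textup{DL}(\nu)=(\lambda-\nu)(A_0-\lambda)^{-1}(A_0-\nu)^{-1}\mathcal{B}_\nu^{*}.
\end{equation*}
Since $\mathcal{B}_\nu^{*}:L^2(\Sigma)\to H^{-2}(\mathbb{R}^n)$ and $(A_0-\lambda)^{-1}(A_0-\nu)^{-1}:H^{-2}(\mathbb{R}^n)\to H^2(\mathbb{R}^n)$ by Proposition~\ref{proposition_resolvent}, the right-hand side takes values in $H^2(\mathbb{R}^n)$; in particular its two one-sided traces coincide and lie in $H^{3/2}(\Sigma)$. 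Consequently $\mathcal{T}(\lambda)-\mathcal{T}(\nu)=2\gamma(\textup{DL}(\lambda)-\textup{DL}(\nu))$ factors through $H^{1/2}(\Sigma)\hookrightarrow L^2(\Sigma)\to H^{3/2}(\Sigma)\hookrightarrow H^{1/2}(\Sigma)$, where the last embedding is compact.

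For~(v) I apply the symmetric form of Green's identity~\eqref{Green_extended} to $u:=\textup{SL}(\overline{\lambda})\varphi$ and $v:=\textup{DL}(\lambda)\psi$ in each of $\Omega_{\textup{i}}$ and $\Omega_{\textup{e}}$ (with outward normals $\nu$ and $-\nu$, respectively). Because $(\mathcal{P}-\overline{\lambda})u\in\ker(A_0-\overline{\lambda})$ and $(\mathcal{P}-\lambda)v\in\ker(A_0-\lambda)$ by~\eqref{range_SL} and~\eqref{range_DL}, while $u\perp\ker(A_0-\overline{\lambda})$ and $v\perp\ker(A_0-\lambda)$ by definition of the paramatrix~\eqref{def_R_lambda}, and because eigenvalues of the self-adjoint $A_0$ are real, a short check gives $(\mathcal{P}u,v)_{L^2(\mathbb{R}^n)}=(u,\mathcal{P}v)_{L^2(\mathbb{R}^n)}$. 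Summing the two integration-by-parts identities the interior terms $\Phi_{\Omega_{\textup{i}/\textup{e}}}[u,v]$ cancel, and continuity of $\gamma u$ and of $\mathcal{B}_\nu v$ across $\Sigma$ reduces the boundary contributions to $(\mathcal{B}_\nu u_{\textup{i}},\gamma v_{\textup{i}})=(\mathcal{B}_\nu u_{\textup{e}},\gamma v_{\textup{e}})$. Substituting $\mathcal{B}_\nu u_{\textup{i}/\textup{e}}=\tfrac12(\pm\varphi+\mathcal{T}(\overline{\lambda})'\varphi)$ from Lemma~\ref{lemma_single_layer_bdd_int_op}(iv) and $\gamma v_{\textup{i}/\textup{e}}=\tfrac12(\mp\psi+\mathcal{T}(\lambda)\psi)$ from~(iii) and expanding, the cross terms $(\mathcal{T}(\overline{\lambda})'\varphi,\mathcal{T}(\lambda)\psi)$ and $(\varphi,\psi)$ cancel, leaving $(\varphi,\mathcal{T}(\lambda)\psi)=(\mathcal{T}(\overline{\lambda})'\varphi,\psi)$.

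Finally, for~(i) I apply Green's identity to $v:=\textup{DL}(\lambda)\varphi$ in $\Omega_{\textup{i}}$ and $\Omega_{\textup{e}}$ and insert $\mathcal{B}_\nu v_{\textup{i}}=\mathcal{B}_\nu v_{\textup{e}}=-\mathcal{R}(\lambda)\varphi$ together with $\gamma v_{\textup{e}}-\gamma v_{\textup{i}}=\varphi$, which yields
\begin{equation*}
(\varphi,\mathcal{R}(\lambda)\varphi)=\Phi_{\mathbb{R}^n\setminus\Sigma}[v,v]-\lambda\,\|v\|_{L^2(\mathbb{R}^n)}^{2}
\end{equation*}
(the case $\lambda\in\sigma_\textup{disc}(A_0)$ producing only a harmless finite-rank correction handled exactly as in~(v)). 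Strong ellipticity of $\mathcal{P}$ provides $\textup{Re}\,\Phi_{\mathbb{R}^n\setminus\Sigma}[v,v]\geq c_0\|v\|_{H^1(\mathbb{R}^n\setminus\Sigma)}^{2}-C\|v\|_{L^2(\mathbb{R}^n)}^{2}$, and $\|\varphi\|_{H^{1/2}(\Sigma)}\leq C'\|v\|_{H^1(\mathbb{R}^n\setminus\Sigma)}$ follows from the trace theorem, so taking real parts produces the Gårding inequality once the $\|v\|_{L^2(\mathbb{R}^n)}^{2}$-term is absorbed into a compact perturbation. Fredholm index zero then follows from the Gårding inequality together with the symmetry $\mathcal{R}(\overline{\lambda})^{*}=\mathcal{R}(\lambda)$, which is obtained by the same Green's-identity argument as~(v). \emph{The main obstacle in~(i) is precisely this compactness}: since $\textup{DL}(\lambda)\varphi$ is supported on all of $\mathbb{R}^n$ the bare Rellich embedding is unavailable, and one has to exploit decay of the Green's function $G(\lambda;x,y)$ at infinity, e.g.\ through a Kolmogorov--Riesz argument applied to $\textup{DL}(\lambda)=\mathcal{G}(\lambda)\mathcal{B}_\nu^{*}$, or to appeal to the classical Gårding estimate for hypersingular operators of strongly elliptic problems in~\cite[Thm.~7.8]{M00}.
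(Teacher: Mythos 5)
Your items (ii) and (iii) coincide with the paper's proof, and your item (iv) is the paper's resolvent-identity argument up to a cosmetic difference: the paper puts the compact step at the front, factoring $\mathcal{T}(\lambda)-\mathcal{T}(\nu)$ through the compact embedding $H^{1/2}(\Sigma)\hookrightarrow L^2(\Sigma)$ followed by the bounded map $L^2(\Sigma)\to H^{1/2}(\Sigma)$. Note that your intermediate space $H^{3/2}(\Sigma)$ is not available on a Lipschitz boundary (the trace \eqref{Dirichlet_trace} only reaches $H^{1}(\Sigma)$, and the paper defines $H^s(\partial\Omega)$ only for $|s|\le 1$), but replacing it by $H^{1/2}(\Sigma)$ or $H^1(\Sigma)$ leaves the argument intact. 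The genuine divergence is in (i) and (v), which the paper disposes of by citation --- (v) because the operator $T^*$ of \cite[Chapter~7]{M00} coincides with $\mathcal{T}(\overline{\lambda})'$, and (i) by \cite[Theorem~7.8]{M00} --- whereas you derive both from Green's identity. Your computation for (v) is correct and self-contained (the cancellation of the $\ker(A_0-\lambda)$ contributions for $\lambda\in\sigma_{\textup{disc}}(A_0)$ works as you indicate, since $u,v\perp\ker(A_0-\lambda)$ and the eigenvalues of $A_0$ are real), and it makes transparent why the transpose appears at $\overline{\lambda}$. For (i), the obstacle you flag is not actually an obstacle: $\textup{DL}(\lambda):H^{1/2}(\Sigma)\to L^2(\mathbb{R}^n)$ is already compact, because it factors through the compact embedding $H^{1/2}(\Sigma)\hookrightarrow L^2(\Sigma)$ of the \emph{compact} boundary followed by the bounded map \eqref{def_double_layer_potential}; hence $\varphi\mapsto \|\textup{DL}(\lambda)\varphi\|^2_{L^2(\mathbb{R}^n)}$ is generated by a compact operator from $H^{1/2}(\Sigma)$ to $H^{-1/2}(\Sigma)$, and no decay estimate or Kolmogorov--Riesz argument is needed. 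With that observation your direct proof of the G\r{a}rding inequality closes; in any case your fallback to \cite[Theorem~7.8]{M00} is precisely what the paper does. (Index zero then follows from the G\r{a}rding inequality alone, via Lax--Milgram plus compact Fredholm perturbation; the symmetry $\mathcal{R}(\overline{\lambda})^*=\mathcal{R}(\lambda)$ is not needed for that step.)
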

\begin{proof}
  Item~(i) follows immediately from \cite[Theorem~7.8]{M00}.   
  Assertion~(ii) is a consequence of Lemma~\ref{lemma_double_layer_potential}~(iii) and the mapping properties of $\gamma$ and $\mathcal{B}_\nu$ in~\eqref{Dirichlet_trace} and~\eqref{conormal_derivative_extension}. Next, the claim of item~(iii) follows directly from Lemma~\ref{lemma_double_layer_potential}~(ii) and the definition of $\mathcal{T}(\lambda)$.
  
  To show statement~(iv) assume that $\lambda \neq \nu \in \rho(A_0)$. As in~\eqref{double_layer_resolvent_identity} we see that $\text{DL}(\lambda) - \text{DL}(\nu): L^2(\Sigma) \rightarrow H^2(\mathbb{R}^n)$ is bounded. Since $H^{2}(\mathbb{R}^n)$ is boundedly embedded in $H^1(\mathbb{R}^n)$, we deduce with the mapping properties of $\gamma$ from~\eqref{Dirichlet_trace} that
  \begin{equation*}
    \mathcal{T}(\lambda) - \mathcal{T}(\nu) = (\nu - \lambda) \gamma (A_0-\nu)^{-1} (A_0 - \lambda)^{-1} \mathcal{B}_\nu^*
  \end{equation*}
  is bounded from $L^2(\Sigma)$ to $H^{1/2}(\Sigma)$. Since $H^{1/2}(\Sigma)$ is compactly embedded in $L^2(\Sigma)$, we conclude eventually that $\mathcal{T}(\lambda) - \mathcal{T}(\nu)$ is compact in $H^{1/2}(\Sigma)$.
  
  Finally, statement~(v) is shown in \cite[Chapter~7]{M00}, since the operator $T^*$ in \cite[Chapter~7]{M00} coincides with $\mathcal{T}(\overline{\lambda})'$.
\end{proof}

\subsection{Characterization of discrete eigenvalues of $A_0$} \label{section_ev_A_0}

In this section we show how the discrete eigenvalues of $A_0$ can be characterized with the help of the boundary integral operators $\mathcal{S}(\lambda), \mathcal{T}(\lambda), \mathcal{T}(\lambda)'$, and $\mathcal{R}(\lambda)$. For that purpose we follow closely considerations from \cite{BR15}, but we adapt the arguments to obtain a formulation on more general hypersurfaces $\Sigma$ which is also more convenient for numerical considerations.

We define for $\lambda \in \rho(A_0)$ the operator 
\begin{equation} \label{def_A_lambda}
  \begin{split}
    \mathcal{A}(\lambda)&: H^{-1/2}(\Sigma) \times H^{1/2}(\Sigma) \rightarrow H^{1/2}(\Sigma) \times H^{-1/2}(\Sigma), \\
    \mathcal{A}(\lambda) \begin{pmatrix} \varphi \\ \psi \end{pmatrix} &= \begin{pmatrix} \gamma \big( \text{SL}(\lambda) \varphi + \text{DL}(\lambda) \psi \big)_\text{i} \\ -\mathcal{B}_\nu \big( \text{SL}(\lambda) \varphi + \text{DL}(\lambda) \psi \big)_\text{e} \end{pmatrix}.
  \end{split}
\end{equation}
Due to the mapping properties of $\gamma$ from~\eqref{Dirichlet_trace} and $\mathcal{B}_\nu$ from~\eqref{conormal_derivative_extension} we get with Lemma~\ref{lemma_single_layer_potential}~(i) and Lemma~\ref{lemma_double_layer_potential}~(i) that $\mathcal{A}(\lambda)$ is well-defined and bounded.
With Lemma~\ref{lemma_single_layer_bdd_int_op}~(iv) and Lemma~\ref{lemma_hypersing_bdd_int_op}~(iii) we see that $\mathcal{A}(\lambda)$ can be written as the block operator matrix
\begin{equation} \label{A_lambda_block}
  \mathcal{A}(\lambda) = \begin{pmatrix} \mathcal{S}(\lambda) & \tfrac{1}{2} (-I + \mathcal{T}(\lambda)) \\ \tfrac{1}{2} (I - \mathcal{T}(\lambda)') & \mathcal{R}(\lambda) \end{pmatrix}.
\end{equation}
Some basic properties of $\mathcal{A}(\lambda)$ are collected in the following lemma:

\begin{lem} \label{lemma_A_lambda}
  Let $\mathcal{A}(\lambda)$, $\lambda \in \rho(A_0)$, be defined by~\eqref{def_A_lambda}. Then the following is true:
  \begin{itemize}
    \item[(i)] The map $\rho(A_0) \ni \lambda \mapsto \mathcal{A}(\lambda)$ is holomorphic.
    \item[(ii)] There exists a compact operator $\mathcal{K}(\lambda)$ and a constant $c(\lambda) > 0$ such that
    \begin{equation*}
      \left|\left( (\mathcal{A}(\lambda) + \mathcal{K}(\lambda)) \begin{pmatrix} \varphi \\ \psi \end{pmatrix}, \begin{pmatrix} \varphi \\ \psi \end{pmatrix} \right) \right| \geq c(\lambda) \big( \| \varphi \|_{H^{-1/2}(\Sigma)}^2 + \| \psi \|_{H^{1/2}(\Sigma)}^2 \big)
    \end{equation*}
    holds for all $\varphi \in H^{-1/2}(\Sigma)$ and $\psi \in H^{1/2}(\Sigma)$, where the duality product is the one for the pairing $H^{1/2}(\Sigma) \times H^{-1/2}(\Sigma)$ and $H^{-1/2}(\Sigma) \times H^{1/2}(\Sigma)$.
  \end{itemize}
\end{lem}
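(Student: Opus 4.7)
The plan is to exploit the block-operator representation~\eqref{A_lambda_block} together with the properties of the individual boundary integral operators $\mathcal{S}(\lambda), \mathcal{T}(\lambda), \mathcal{T}(\lambda)', \mathcal{R}(\lambda)$ gathered in Lemmas~\ref{lemma_single_layer_bdd_int_op} and~\ref{lemma_hypersing_bdd_int_op}.

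For assertion~(i), every entry of the matrix~\eqref{A_lambda_block} is holomorphic on $\rho(A_0)$ by Lemma~\ref{lemma_single_layer_bdd_int_op}(iii) and Lemma~\ref{lemma_hypersing_bdd_int_op}(ii), and the identity operator is trivially holomorphic. Since sums of holomorphic operator-valued maps are holomorphic, it will follow that $\mathcal{A}(\cdot)$ is holomorphic in $\mathcal{B}(H^{-1/2}(\Sigma) \times H^{1/2}(\Sigma),\, H^{1/2}(\Sigma) \times H^{-1/2}(\Sigma))$.

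For assertion~(ii), the idea is to fix a real reference point $\lambda_0 \in \rho(A_0) \cap \mathbb{R}$, which exists since the form $\Phi_{\mathbb{R}^n}$ is bounded from below (Lemma~\ref{lemma_free_Op}), and to absorb the $\lambda$-dependence of the off-diagonal entries into the compact perturbation
\begin{equation*}
\mathcal{K}(\lambda) := \begin{pmatrix} \mathcal{C}_S(\lambda) & -\tfrac{1}{2}(\mathcal{T}(\lambda) - \mathcal{T}(\lambda_0)) \\ \tfrac{1}{2}(\mathcal{T}(\lambda)' - \mathcal{T}(\lambda_0)') & \mathcal{C}_R(\lambda) \end{pmatrix},
\end{equation*}
where $\mathcal{C}_S(\lambda), \mathcal{C}_R(\lambda)$ are the compact operators furnished by the Gårding inequalities of Lemmas~\ref{lemma_single_layer_bdd_int_op}(ii) and~\ref{lemma_hypersing_bdd_int_op}(i). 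The diagonal blocks are compact by construction, the $(1,2)$-block by Lemma~\ref{lemma_hypersing_bdd_int_op}(iv), and the $(2,1)$-block by the adjoint analog of~(iv) obtained by combining it with the pairing identity~(v). With this choice $\mathcal{A}(\lambda) + \mathcal{K}(\lambda)$ has diagonal $\mathcal{S}(\lambda) + \mathcal{C}_S(\lambda)$, $\mathcal{R}(\lambda) + \mathcal{C}_R(\lambda)$ and off-diagonal frozen at $\lambda_0$.

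It remains to evaluate the sesquilinear form on $(\varphi,\psi)^{\top}$ and extract its real part. On the diagonal the Gårding inequalities give directly $\geq c_S(\lambda)\|\varphi\|^2_{H^{-1/2}(\Sigma)} + c_R(\lambda)\|\psi\|^2_{H^{1/2}(\Sigma)}$ (after using $\text{Re}\,(f,g) = \text{Re}\,(g,f)$ to match the pairing order). On the off-diagonal the $\pm \tfrac{1}{2} I$ terms produce $\tfrac{1}{2}\bigl[(\varphi,\psi) - (\psi,\varphi)\bigr] = i\,\text{Im}(\varphi,\psi)$ via $(g,f) = \overline{(f,g)}$, while Lemma~\ref{lemma_hypersing_bdd_int_op}(v) applied with $\overline{\lambda_0} = \lambda_0$ rewrites the $\mathcal{T}(\lambda_0),\mathcal{T}(\lambda_0)'$ cross-terms as $-i\,\text{Im}(\mathcal{T}(\lambda_0)'\varphi,\psi)$; both contributions are purely imaginary and therefore drop out of the real part. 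Hence $\text{Re}\bigl((\mathcal{A}(\lambda) + \mathcal{K}(\lambda))(\varphi,\psi)^{\top}, (\varphi,\psi)^{\top}\bigr) \geq c(\lambda)\bigl(\|\varphi\|^2_{H^{-1/2}(\Sigma)} + \|\psi\|^2_{H^{1/2}(\Sigma)}\bigr)$, and the claim on the modulus follows from $|z| \geq |\text{Re}\,z|$. The main obstacle will be precisely this off-diagonal cancellation: one has to keep careful track of the two distinct dual pairings $H^{-1/2} \times H^{1/2}$ and $H^{1/2} \times H^{-1/2}$ used in the sesquilinear form, and the reality of $\lambda_0$ in combination with the identity~(v) is what allows the $\mathcal{T},\mathcal{T}'$ cross-terms to collapse into a purely imaginary quantity that does not destroy coercivity.
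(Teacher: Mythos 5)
Your proof is correct and takes essentially the same route as the paper: part~(i) is the same observation on the holomorphy of the block entries, and in part~(ii) both arguments expand the quadratic form, apply the G\r{a}rding inequalities of Lemmas~\ref{lemma_single_layer_bdd_int_op}~(ii) and~\ref{lemma_hypersing_bdd_int_op}~(i) on the diagonal, and use Lemma~\ref{lemma_hypersing_bdd_int_op}~(iv)--(v) to show that the off-diagonal contributes only a purely imaginary quantity up to a compact perturbation. The only cosmetic difference is that you freeze the off-diagonal at a real reference point $\lambda_0$, whereas the paper compares $\mathcal{T}(\lambda)$ directly with $\mathcal{T}(\overline{\lambda})$ and absorbs the compact difference $\mathcal{T}(\lambda)-\mathcal{T}(\overline{\lambda})$ into $\mathcal{K}(\lambda)$.
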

\begin{proof}
  Assertion~(i) follows from Lemma~\ref{lemma_single_layer_bdd_int_op}~(iii) and Lemma~\ref{lemma_hypersing_bdd_int_op}~(ii), as $\mathcal{S}(\lambda)$, $\mathcal{T}(\lambda)$, $\mathcal{T}(\lambda)',$ and $\mathcal{R}(\lambda)$ are holomorphic. To prove item~(ii) we compute
  \begin{equation*}
    \begin{split}
      \bigg( \mathcal{A}(\lambda)& \begin{pmatrix} \varphi \\ \psi \end{pmatrix}, \begin{pmatrix} \varphi \\ \psi \end{pmatrix} \bigg) = \left( \begin{pmatrix} \mathcal{S}(\lambda) & \tfrac{1}{2} (-I + \mathcal{T}(\lambda)) \\ \tfrac{1}{2} (I - \mathcal{T}(\lambda)') & \mathcal{R}(\lambda) \end{pmatrix} \begin{pmatrix} \varphi \\ \psi \end{pmatrix}, \begin{pmatrix} \varphi \\ \psi \end{pmatrix} \right) \\
      &= (\mathcal{S}(\lambda) \varphi, \varphi) + (\mathcal{R}(\lambda) \psi, \psi) + \frac{1}{2} \big( (\varphi, \psi) - (\psi, \varphi) \big) \\
      &\qquad + \frac{1}{2} \big( (\mathcal{T}(\lambda) \psi, \varphi) - (\varphi, \mathcal{T}(\lambda) \psi) \big) + \frac{1}{2} \big(  (\varphi, \mathcal{T}(\lambda) \psi) - (\mathcal{T}(\lambda)' \varphi, \psi)  \big).
    \end{split}
  \end{equation*}
  With Lemma~\ref{lemma_hypersing_bdd_int_op}~(v) we have
  \begin{equation*}
    \big(\varphi, \mathcal{T}(\lambda) \psi\big) - \big(\mathcal{T}(\lambda)' \varphi, \psi\big) = \big(\varphi, (\mathcal{T}(\lambda) - \mathcal{T}(\overline{\lambda}) )\psi\big) 
  \end{equation*}
  and the operator $\mathcal{T}(\lambda) - \mathcal{T}(\overline{\lambda})$ is compact by Lemma~\ref{lemma_hypersing_bdd_int_op}~(iv). Therefore, we get with a compact operator $\mathcal{K}(\lambda)$
  \begin{equation*}
    \begin{split}
      \text{Re}\, \bigg( \mathcal{A}(\lambda) \begin{pmatrix} \varphi \\ \psi \end{pmatrix}, \begin{pmatrix} \varphi \\ \psi \end{pmatrix} \bigg) 
      = \text{Re}\, \big( (\mathcal{S}(\lambda) \varphi, \varphi) + (\mathcal{R}(\lambda) \psi, \psi) + \big(\varphi, (\mathcal{T}(\lambda) - \mathcal{T}(\overline{\lambda}) )\psi \big)  \big)& \\
      \geq c(\lambda) \big( \| \varphi \|_{H^{-1/2}(\Sigma)}^2 + \| \psi \|_{H^{1/2}(\Sigma)}^2 \big) + \text{Re}\, \left( \mathcal{K}(\lambda) \begin{pmatrix} \varphi \\ \psi \end{pmatrix}, \begin{pmatrix} \varphi \\ \psi \end{pmatrix} \right)&,
    \end{split}
  \end{equation*}
  which implies because of $|z| \geq \text{Re}\, z$ for $z \in \mathbb{C}$ the claimed result.
\end{proof}

In the following theorem we characterize the discrete eigenvalues of $A_0$ with the help of the operator-valued function $\mathcal{A}$. For that we define for a number $\lambda_0 \in \sigma_\text{disc}(A_0) \cup \rho(A_0) = \mathbb{C} \setminus \sigma_\text{ess}(A_0)$, for which there exists an $\varepsilon > 0$ with $B(\lambda_0, \varepsilon) \setminus \{ \lambda_0 \} \subset \rho(A_0)$, the map
\begin{equation} \label{def_residual}
  R_{\mathcal{A}(\lambda_0)} := \lim_{\lambda \rightarrow \lambda_0} (\lambda - \lambda_0) \mathcal{A}(\lambda).
\end{equation}
The proof of the following theorem follows closely ideas from \cite[Theorem~3.2]{BR15}, but the operator $\mathcal{A}(\lambda)$ appearing in our formulation is easier accessible for numerical applications as the map $M(\lambda)$ in \cite{BR15} since it consists of explicitly computable integral operators.

\begin{thm} \label{theorem_spectrum_A_0}
  A number $\lambda_0$ belongs to the discrete spectrum of $A_0$ if and only if $\lambda_0$ is a pole of $\mathcal{A}(\lambda)$. Moreover,
  \begin{equation} \label{equation_eigenspace}
    \ran R_{\mathcal{A}(\lambda_0)} = \big\{ (\gamma f, \mathcal{B}_\nu f)^\top: f \in \ker (A_0  -\lambda_0) \big\}
  \end{equation}
  holds.
\end{thm}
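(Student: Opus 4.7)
The strategy is to combine the spectral Laurent expansion of $(A_0-\lambda)^{-1}$ at an isolated eigenvalue with the representations $\textup{SL}(\lambda)=\mathcal{G}(\lambda)\gamma^*$ and $\textup{DL}(\lambda)=\mathcal{G}(\lambda)\mathcal{B}_\nu^*$. One direction is immediate: if $\lambda_0\in\rho(A_0)$ then Lemma~\ref{lemma_A_lambda}(i) makes $\mathcal{A}(\cdot)$ holomorphic at $\lambda_0$, so any pole of $\mathcal{A}$ lying in $\sigma_\textup{disc}(A_0)\cup\rho(A_0)$ must be a discrete eigenvalue.

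For the converse I would fix $\lambda_0\in\sigma_\textup{disc}(A_0)$ with orthonormal eigenbasis $\{e_1,\dots,e_N\}$ of $\ker(A_0-\lambda_0)$. In a punctured neighbourhood of $\lambda_0$ one has $P_\lambda=I$ and thus $\mathcal{G}(\lambda)=(A_0-\lambda)^{-1}$; self-adjointness of $A_0$ yields the Laurent expansion
\begin{equation*}
  (A_0-\lambda)^{-1} = -\frac{\widehat{P}_{\lambda_0}}{\lambda-\lambda_0} + H(\lambda),
\end{equation*}
with $H(\cdot)$ holomorphic at $\lambda_0$ and $\widehat{P}_{\lambda_0}u=\sum_k (u,e_k)_{L^2}e_k$. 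By elliptic regularity each $e_k\in C^\infty$, so $\widehat{P}_{\lambda_0}$ extends continuously to $H^{-2}(\mathbb{R}^n)$ and acts on the distributions in question by $\widehat{P}_{\lambda_0}(\gamma^*\varphi+\mathcal{B}_\nu^*\psi)=\sum_k[(\varphi,\gamma e_k)+(\psi,\mathcal{B}_\nu e_k)]\,e_k$; by Proposition~\ref{proposition_resolvent} the whole expansion is valid in $\mathcal{B}(H^{-2}(\mathbb{R}^n),H^2(\mathbb{R}^n))$.

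Writing $f_{\varphi,\psi}:=\widehat{P}_{\lambda_0}(\gamma^*\varphi+\mathcal{B}_\nu^*\psi)\in\ker(A_0-\lambda_0)\subset H^2(\mathbb{R}^n)$, substituting the identity $\textup{SL}(\lambda)\varphi+\textup{DL}(\lambda)\psi=(A_0-\lambda)^{-1}(\gamma^*\varphi+\mathcal{B}_\nu^*\psi)$ into definition~\eqref{def_A_lambda} and taking the continuous boundary traces yields, after noting that for an $H^2$-function the interior and exterior traces coincide,
\begin{equation*}
  R_{\mathcal{A}(\lambda_0)}\!\begin{pmatrix}\varphi\\\psi\end{pmatrix} = \begin{pmatrix}-\gamma f_{\varphi,\psi}\\ \mathcal{B}_\nu f_{\varphi,\psi}\end{pmatrix}.
\end{equation*}
In particular $\lambda_0$ is a simple pole of $\mathcal{A}$, and $\ran R_{\mathcal{A}(\lambda_0)}$ is contained in the subspace of Cauchy data of eigenfunctions asserted in~\eqref{equation_eigenspace} (up to the overall sign convention built into~\eqref{def_A_lambda}).

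The remaining and most delicate step is the reverse inclusion in~\eqref{equation_eigenspace}, equivalent to showing that $(\varphi,\psi)\mapsto f_{\varphi,\psi}$ surjects onto $\ker(A_0-\lambda_0)$; by duality this amounts to linear independence of the Cauchy data $\{(\gamma e_k,\mathcal{B}_\nu e_k)\}_{k=1}^N$ in $H^{1/2}(\Sigma)\times H^{-1/2}(\Sigma)$. A nontrivial linear dependence would furnish $f=\sum c_k e_k\in H^2(\mathbb{R}^n)$ solving $(\mathcal{P}-\lambda_0)f=0$ on $\Omega_\textup{i}\cup\Omega_\textup{e}$ with $\gamma f=\mathcal{B}_\nu f=0$ on $\Sigma$. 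A Holmgren/Aronszajn-type unique continuation argument, valid because $\mathcal{P}$ is strongly elliptic with $C^\infty_b$ coefficients, then forces $f\equiv 0$ on $\Omega_\textup{i}$; propagating the zero set across $\Sigma$ and using that $f\in L^2(\mathbb{R}^n)$ on the unbounded connected component $\Omega_\textup{e}$ gives $f\equiv 0$ globally, so every $c_k=0$. Establishing this unique continuation across the merely Lipschitz interface $\Sigma$ for the general operator $\mathcal{P}$ is the principal technical obstacle; once in hand, the range identification in~\eqref{equation_eigenspace} follows.
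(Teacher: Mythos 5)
Your proposal is essentially the paper's argument, reorganized. The paper fixes an auxiliary point $\mu \in \mathbb{C}\setminus\mathbb{R}$, first proves $\ker(A_0-\lambda_0)=\widehat{P}_{\lambda_0}\big[\mathrm{SL}(\mu)\varphi+\mathrm{DL}(\mu)\psi\big]$ by a duality/orthogonality argument, and then extracts the residue through a chain of resolvent identities; you instead expand $(A_0-\lambda)^{-1}$ directly in a Laurent series at $\lambda_0$ and read off $R_{\mathcal{A}(\lambda_0)}(\varphi,\psi)^\top$ as the Cauchy data of $\widehat{P}_{\lambda_0}(\gamma^*\varphi+\mathcal{B}_\nu^*\psi)$. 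Both routes reduce the reverse inclusion in~\eqref{equation_eigenspace} to the surjectivity of $(\varphi,\psi)\mapsto f_{\varphi,\psi}$, i.e.\ to the linear independence of the Cauchy data $\{(\gamma e_k,\mathcal{B}_\nu e_k)\}$. Your version is the more economical one and makes the simplicity of the pole explicit; the sign mismatch you note in the second component is present in the paper's own computation as well and is immaterial for a statement about ranges. One point you should not wave at: the regular part $H(\lambda)$ must be holomorphic at $\lambda_0$ in $\mathcal{B}(H^{-2}(\mathbb{R}^n),H^{2}(\mathbb{R}^n))$, whereas Proposition~\ref{proposition_resolvent} only asserts holomorphy on $\rho(A_0)$; this is obtained by repeating the Neumann-series argument of that proposition for $A_0$ restricted to $\ran P_{\lambda_0}$.

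The step you single out as the principal technical obstacle is in fact not one, and the paper settles it by citing \cite[Prop.~2.5]{BR12}. No unique continuation ``across'' the Lipschitz interface is required. If $f\in H^2(\mathbb{R}^n)$ satisfies $(\mathcal{P}-\lambda_0)f=0$ with $\gamma f=\mathcal{B}_\nu f=0$ on $\Sigma$, extend $f\upharpoonright\Omega_{\mathrm{i}}$ by zero to $\Omega_{\mathrm{e}}$: the extension has matching Dirichlet and conormal traces from both sides, hence belongs to $H^2(\mathbb{R}^n)$ by~\eqref{condition_H2} and solves $(\mathcal{P}-\lambda_0)u=0$ on all of $\mathbb{R}^n$ while vanishing on the open set $\Omega_{\mathrm{e}}$. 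Weak unique continuation for the strongly elliptic operator $\mathcal{P}$ with $C_b^\infty$ coefficients, applied in the connected open set $\mathbb{R}^n$ (no interface is involved at this stage), yields $f=0$ on $\Omega_{\mathrm{i}}$, and the symmetric extension gives $f=0$ on $\Omega_{\mathrm{e}}$. With that ingredient supplied, your proof is complete and matches the paper's.
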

\begin{proof}
  Let $\lambda_0 \notin \sigma_\text{ess}(A_0)$. It suffices to show that~\eqref{equation_eigenspace} is true. Let $\mu \in \mathbb{C} \setminus \mathbb{R}$ be fixed and let $\widehat{P}_{\lambda_0}$ be the orthogonal projection in $L^2(\mathbb{R}^n)$ onto $\ker(A_0 - \lambda_0)$. We claim first that
  \begin{equation} \label{equation_kernel_A_0}
    \ker (A_0 - \lambda_0) = \big\{ \widehat{P}_{\lambda_0} [\text{SL}(\mu) \varphi + \text{DL}(\mu) \psi]: \varphi \in H^{-1/2}(\Sigma), \psi \in H^{1/2}(\Sigma) \}.
  \end{equation}
  To show this assume that $f \in \ker(A_0 - \lambda_0)$ is such that
  \begin{equation*}
    0 = \big( f, \widehat{P}_{\lambda_0} [\text{SL}(\mu) \varphi + \text{DL}(\mu) \psi] \big)_{L^2(\mathbb{R}^n)} = \big( f, \text{SL}(\mu) \varphi + \text{DL}(\mu) \psi \big)_{L^2(\mathbb{R}^n)}
  \end{equation*}
  holds for all $\varphi \in H^{1/2}(\Sigma)$ and $\psi \in H^{-1/2}(\Sigma)$. Since $f \in \ker(A_0 - \lambda_0)$, we have $(A_0-\overline{\mu})^{-1} f = (\lambda_0 - \overline{\mu})^{-1} f$ and thus, the definitions of $\text{SL}(\mu)$ and $\text{DL}(\mu)$ lead to
  \begin{equation*}
    \begin{split}
      0 &= \big( f, (A_0 - \mu)^{-1} \gamma^* \varphi + (A_0 - \mu)^{-1} \mathcal{B}_\nu^* \psi \big)_{L^2(\mathbb{R}^n)} \\
      &= \big( \gamma(A_0 - \overline{\mu})^{-1} f, \varphi \big) + \big( \mathcal{B}_\nu (A_0 - \overline{\mu})^{-1} f, \psi \big) \\
      &= \frac{1}{\lambda_0 - \overline{\mu}} \big[ ( \gamma f, \varphi ) + ( \mathcal{B}_\nu f, \psi ) \big].
    \end{split}
  \end{equation*}
  Since this is true for all $\varphi \in H^{1/2}(\Sigma)$ and $\psi \in H^{-1/2}(\Sigma)$, we conclude that $\gamma f = \mathcal{B}_\nu f = 0$. It follows from \cite[Proposition~2.5]{BR12} (this result and its proof are also true for unbounded domains) that $f = 0$. Since for $\lambda_0 \notin \sigma_\text{ess}(A_0)$ the set $\ker(A_0 - \lambda_0)$ is finite-dimensional, \eqref{equation_kernel_A_0} is shown.
  
  We are now prepared to prove~\eqref{equation_eigenspace}. By the spectral theorem the resolvent of $A_0$ can be written in a small neighborhood of $\lambda_0$ as 
  \begin{equation*}
    (A_0 - \mu)^{-1} = \frac{1}{\lambda_0 - \mu} \widehat{P}_{\lambda_0} + \mathcal{F}(\mu),
  \end{equation*}
  where $\mathcal{F}(\mu)$ is a locally bounded and continuous operator in $\mu$. Hence, we conclude that $R_{\mathcal{A}(\lambda_0)}$ can be a nontrivial operator, only if $\widehat{P}_{\lambda_0}$ is nontrivial, and that 
  \begin{equation*}
    \ran R_{\mathcal{A}(\lambda_0)} \subset \big\{ (\gamma f, \mathcal{B}_\nu f)^\top: f \in \ker (A_0  -\lambda_0) \big\}.
  \end{equation*}
  To show the other inclusion in~\eqref{equation_eigenspace}, let $f \in \ker(A_0 - \lambda_0)$, fix $\mu \in \mathbb{C} \setminus \mathbb{R}$, and choose $\varphi \in H^{-1/2}(\Sigma)$ and $\psi \in H^{1/2}(\Sigma)$ such that $f = \widehat{P}_{\lambda_0} [\text{SL}(\mu) \varphi + \text{DL}(\mu)\psi]$; such a choice is always possible by~\eqref{equation_kernel_A_0}. Note that according to the spectral theorem we have $\widehat{P}_{\lambda_0} g = \lim_{\lambda \rightarrow \lambda_0} (\lambda_0-\lambda) (A_0-\lambda)^{-1} g$, where the limit is the one in $L^2(\mathbb{R}^n)$. Hence, we find
  \begin{equation*}
    \begin{split}
      \begin{pmatrix} \gamma f \\ \mathcal{B}_\nu f \end{pmatrix} 
      &= \begin{pmatrix} \gamma \\ \mathcal{B}_\nu \end{pmatrix} (A_0 - \mu)^{-1} (A_0-\mu) \widehat{P}_{\lambda_0} [\text{SL}(\mu) \varphi + \text{DL}(\mu)\psi] \\
      &= (\lambda_0 - \mu) \begin{pmatrix} \gamma \\ \mathcal{B}_\nu \end{pmatrix} (A_0 - \mu)^{-1}  \widehat{P}_{\lambda_0} [\text{SL}(\mu) \varphi + \text{DL}(\mu)\psi] \\
      &= (\lambda_0 - \mu) \begin{pmatrix} \gamma \\ \mathcal{B}_\nu \end{pmatrix} (A_0 - \mu)^{-1}  \lim_{\lambda \rightarrow \lambda_0} (\lambda_0-\lambda) (A_0-\lambda)^{-1} [\text{SL}(\mu) \varphi + \text{DL}(\mu)\psi].
    \end{split}
  \end{equation*}
  Note that the mapping
  \begin{equation*}
    \begin{pmatrix} \gamma \\ \mathcal{B}_\nu \end{pmatrix} (A_0 - \mu)^{-1}: L^2(\mathbb{R}^n) \rightarrow H^{1/2}(\Sigma) \times H^{-1/2}(\Sigma) 
  \end{equation*}
  is continuous. Hence, we conclude
  \begin{equation*}
    \begin{split}
      &\begin{pmatrix} \gamma f \\ \mathcal{B}_\nu f \end{pmatrix} 
      = \lim_{\lambda \rightarrow \lambda_0} (\lambda_0-\lambda) (\lambda_0 - \mu) \begin{pmatrix} \gamma \\ \mathcal{B}_\nu \end{pmatrix} (A_0 - \mu)^{-1} (A_0-\lambda)^{-1} [\text{SL}(\mu) \varphi + \text{DL}(\mu)\psi] \\
      &\qquad= \lim_{\lambda \rightarrow \lambda_0} (\lambda_0-\lambda) (\lambda_0 - \mu) \begin{pmatrix} \gamma \\ \mathcal{B}_\nu \end{pmatrix} (A_0 - \mu)^{-1} (A_0-\lambda)^{-1} (A_0 - \mu)^{-1} [\gamma^* \varphi + \mathcal{B}_\nu^* \psi].
    \end{split}
  \end{equation*}
  Applying two times the resolvent identity, we find first for $g \in L^2(\mathbb{R}^n)$ that
  \begin{equation*}
    \begin{split}
      (A_0 - \mu)^{-1} &(A_0-\lambda)^{-1} (A_0 - \mu)^{-1} g = \frac{1}{\mu - \lambda} [(A_0 - \mu)^{-1} - (A_0 - \lambda)^{-1}] (A_0 - \mu)^{-1} g \\
      &= \frac{1}{\mu - \lambda} (A_0 - \mu)^{-2} g - \frac{1}{(\mu - \lambda)^2} [(A_0 - \mu)^{-1} - (A_0 - \lambda)^{-1}] g.
    \end{split}
  \end{equation*}
  With a continuity argument this extends to all $g \in H^{-2}(\mathbb{R}^n)$. Using this, we find finally
  \begin{equation*}
    \begin{split}
      \begin{pmatrix} \gamma f \\ \mathcal{B}_\nu f \end{pmatrix} \!
      &=\!\! \lim_{\lambda \rightarrow \lambda_0} (\lambda_0-\lambda) (\lambda_0 - \mu) \! \begin{pmatrix} \gamma \\ \mathcal{B}_\nu \end{pmatrix} \!(A_0 - \mu)^{-1} (A_0-\lambda)^{-1} (A_0 - \mu)^{-1} [\gamma^* \varphi \! + \! \mathcal{B}_\nu^* \psi] \\
      &= \lim_{\lambda \rightarrow \lambda_0} \frac{(\lambda_0 - \lambda)(\lambda_0-\mu)}{(\lambda - \mu)^2} \begin{pmatrix} \gamma \\ \mathcal{B}_\nu \end{pmatrix} [(A_0-\lambda)^{-1}  \gamma^* \varphi + (A_0-\lambda)^{-1} \mathcal{B}_\nu^* \psi] \\
      &= \lim_{\lambda \rightarrow \lambda_0} \frac{(\lambda_0 - \lambda)(\lambda_0-\mu)}{(\lambda - \mu)^2} \mathcal{A}(\lambda) \begin{pmatrix} \varphi \\ \psi \end{pmatrix} 
          = \frac{1}{\lambda_0 - \mu} R_{\mathcal{A}(\lambda_0)} \begin{pmatrix} \varphi \\ \psi \end{pmatrix},
    \end{split}
  \end{equation*}
  which shows that also the second inclusion in~\eqref{equation_eigenspace} is true. This finishes the proof of this theorem.
\end{proof}

\section{Elliptic differential operators with $\delta$-potentials supported on compact Lipschitz smooth surfaces}
\label{section_delta}

This section is devoted to the study of the spectral properties of the differential operator which is formally given by $A_\alpha := \mathcal{P} + \alpha \delta_\Sigma$. First, we introduce $A_\alpha$ in Section~\ref{section_delta_op_analysis} as an operator in $L^2(\mathbb{R}^n)$ and show its self-adjointness; in this procedure we also obtain in Proposition~\ref{proposition_Birman_Schwinger_delta} the Birman-Schwinger principle to characterize the discrete eigenvalues of $A_\alpha$ via boundary integral equations. Then, in Section~\ref{SubSection:ApproxDelta} we discuss how these boundary integral equations can be solved numerically by boundary element methods. Finally, in Section~\ref{SubSection:NumExDelta} we show some numerical examples.

\subsection{Definition and self-adjointness of $A_\alpha$} \label{section_delta_op_analysis}

As usual, $\Omega_\text{i} \subset \mathbb{R}^n$ is a bounded Lipschitz domain with boundary $\Sigma := \partial \Omega_\text{i}$, $\Omega_\text{e} := \mathbb{R}^n \setminus \overline{\Omega_\text{i}}$, and $\nu$ denotes the unit normal to $\Omega_\text{i}$. Recall the definition of the elliptic partial differential expression $\mathcal{P}$ from~\eqref{def_P}, the Sobolev space $H^1_\mathcal{P}(\Omega_{\text{i}/\text{e}})$ from~\eqref{def_H_P}, and the weak conormal derivative $\mathcal{B}_\nu$ from~\eqref{def_B_nu} and~\eqref{conormal_derivative_extension}. For a real valued function $\alpha \in L^\infty(\Sigma)$ we define in $L^2(\mathbb{R}^n)$ the partial differential operator $A_\alpha$ by
\begin{equation} \label{def_A_alpha}
  \begin{split}
    A_\alpha f &:= \mathcal{P} f_\text{i} \oplus \mathcal{P} f_\text{e}, \\
    \dom A_\alpha &:= \big\{ f = f_\text{i} \oplus f_\text{e} \in H^1_\mathcal{P}(\Omega_\text{i}) \oplus H^1_\mathcal{P}(\Omega_\text{e}): \gamma f_\text{i} = \gamma f_\text{e}, \, \mathcal{B}_\nu f_\text{e} - \mathcal{B}_\nu f_\text{i} = \alpha \gamma f \big\}.
  \end{split}
\end{equation}
With the help of~\eqref{Green_extended} it is not difficult to show that $A_\alpha$ is symmetric in $L^2(\mathbb{R}^n)$:

\begin{lem} \label{lemma_symmetric_delta}
  Let $\alpha \in L^\infty(\Sigma)$ be real valued. Then the operator $A_\alpha$ defined by~\eqref{def_A_alpha} is symmetric in $L^2(\mathbb{R}^n)$.
\end{lem}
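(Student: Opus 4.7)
The plan is straightforward: apply Green's identity~\eqref{Green_extended} separately on $\Omega_\text{i}$ and $\Omega_\text{e}$, use the jump condition defining $\dom A_\alpha$ to rewrite the resulting boundary term, and exploit the Hermitian symmetry of $\Phi_\Omega$ together with the fact that $\alpha$ is real.

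First I would observe that $A_\alpha$ is densely defined, since $C_0^\infty(\mathbb{R}^n \setminus \overline{\Sigma}) \subset \dom A_\alpha$ (functions with vanishing traces satisfy both conditions trivially) and this set is dense in $L^2(\mathbb{R}^n)$. Then, for $f, g \in \dom A_\alpha$ I would apply~\eqref{Green_extended} on $\Omega_\text{i}$:
\begin{equation*}
  (\mathcal{P} f_\text{i}, g_\text{i})_{L^2(\Omega_\text{i})} = \Phi_{\Omega_\text{i}}[f_\text{i}, g_\text{i}] - (\mathcal{B}_\nu f_\text{i}, \gamma g_\text{i}),
\end{equation*}
and on $\Omega_\text{e}$, carefully noting that the outer normal of $\Omega_\text{e}$ is $-\nu$, so the conormal derivative for $\Omega_\text{e}$ is $-\mathcal{B}_\nu$:
\begin{equation*}
  (\mathcal{P} f_\text{e}, g_\text{e})_{L^2(\Omega_\text{e})} = \Phi_{\Omega_\text{e}}[f_\text{e}, g_\text{e}] + (\mathcal{B}_\nu f_\text{e}, \gamma g_\text{e}).
\end{equation*}

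Using the continuity of the Dirichlet trace $\gamma f_\text{i} = \gamma f_\text{e} =: \gamma f$ (and analogously for $g$), adding the two identities yields
\begin{equation*}
  (A_\alpha f, g)_{L^2(\mathbb{R}^n)} = \Phi_{\Omega_\text{i}}[f_\text{i}, g_\text{i}] + \Phi_{\Omega_\text{e}}[f_\text{e}, g_\text{e}] + (\mathcal{B}_\nu f_\text{e} - \mathcal{B}_\nu f_\text{i}, \gamma g).
\end{equation*}
Invoking the jump condition $\mathcal{B}_\nu f_\text{e} - \mathcal{B}_\nu f_\text{i} = \alpha \gamma f$ from $\dom A_\alpha$ replaces the boundary term with $(\alpha \gamma f, \gamma g)_{L^2(\Sigma)}$.

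The symmetry then follows: because $a_{jk} = \overline{a_{kj}}$ and $a$ is real, a direct inspection of~\eqref{def_Phi_Omega} (noting that the pair of magnetic terms $a_j \partial_j f \overline{g} + f\overline{a_j \partial_j g}$ is Hermitian by construction) shows $\Phi_\Omega[f, g] = \overline{\Phi_\Omega[g, f]}$; moreover, since $\alpha$ is real, $(\alpha \gamma f, \gamma g)_{L^2(\Sigma)} = \overline{(\alpha \gamma g, \gamma f)_{L^2(\Sigma)}}$. Applying the same computation to $(A_\alpha g, f)_{L^2(\mathbb{R}^n)}$ and conjugating therefore gives $(A_\alpha f, g)_{L^2(\mathbb{R}^n)} = (f, A_\alpha g)_{L^2(\mathbb{R}^n)}$, as required. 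The only thing to be careful about is the sign convention for the conormal derivative on the exterior domain; once that is correctly tracked, the proof is essentially a one-line application of Green's identity combined with the $\delta$-jump condition.
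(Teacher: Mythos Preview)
Your proof is correct and follows essentially the same route as the paper: apply Green's identity~\eqref{Green_extended} on $\Omega_\text{i}$ and $\Omega_\text{e}$ with the appropriate sign for the exterior normal, then use the jump condition and the reality of $\alpha$. The only cosmetic difference is that the paper verifies $(A_\alpha f, f)_{L^2(\mathbb{R}^n)} \in \mathbb{R}$ for all $f \in \dom A_\alpha$ (which yields symmetry by polarization), whereas you compute $(A_\alpha f, g)$ for two elements and compare directly; your explicit remark on density is a small bonus the paper omits.
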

\begin{proof}
  We show that $(A_\alpha f, f)_{L^2(\mathbb{R}^n)} \in \mathbb{R}$ for all $f \in \dom A_\alpha$. Let $f \in \dom A_\alpha$ be fixed. 
  Using~\eqref{Green_extended} in $\Omega_\text{i}$ and $\Omega_\text{e}$ and that the normal $\nu$ is pointing outside of $\Omega_\text{i}$ and inside of $\Omega_\text{e}$ we get
  \begin{equation*}
    \begin{split}
      (A_\alpha f, f)_{L^2(\mathbb{R}^n)} &= (\mathcal{P} f_\text{i}, f_\text{i})_{L^2(\Omega_\text{i})} + (\mathcal{P} f_\text{e}, f_\text{e})_{L^2(\Omega_\text{e})} \\
      &= \Phi_{\Omega_\text{i}}[f_\text{i}, f_\text{i}] - (\mathcal{B}_\nu f_\text{i}, \gamma f_\text{i})  + \Phi_{\Omega_\text{e}}[f_\text{e}, f_\text{e}] + ( \mathcal{B}_\nu f_\text{e}, \gamma f_\text{e}).
    \end{split}
  \end{equation*}
  Since $f \in \dom A_\alpha$ we have $\gamma f_\text{i} = \gamma f_\text{e}$. This implies, in particular, $f \in H^1(\mathbb{R}^n)$ and hence $\Phi_{\Omega_\text{i}}[f_\text{i}, f_\text{i}] + \Phi_{\Omega_\text{e}}[f_\text{i}, f_\text{e}] = \Phi_{\mathbb{R}^n}[f, f]$. With the help of the transmission condition for $f \in \dom A_\alpha$ along $\Sigma$ we conclude
  \begin{equation*}
    \begin{split}
      (A_\alpha f, f)_{L^2(\mathbb{R}^n)} &= \Phi_{\mathbb{R}^n}[f, f] + (\mathcal{B}_\nu f_\text{e} - \mathcal{B}_\nu f_\text{i}, \gamma f ) 
      =\Phi_{\mathbb{R}^n}[f, f] + (\alpha \gamma f, \gamma f).
    \end{split}
  \end{equation*}
  Since the sesquilinear form $\Phi_{\mathbb{R}^n}$ is symmetric and $\alpha$ is real valued, the latter number is real and therefore, the claim is shown.
\end{proof}

In the following proposition we show how the discrete eigenvalues of $A_\alpha$ can be characterized with the help of boundary integral operators. First, we determine the eigenfunctions in $\ker(A_\alpha - \lambda) \ominus \ker(A_0-\lambda)$ with the Birman-Schwinger principle for $A_\alpha$, where the linear eigenvalue problem for the unbounded partial differential operator $A_\alpha$ is translated to the nonlinear eigenvalue problem for a family of boundary integral operators which are related to the single layer boundary integral operator $\mathcal{S}(\lambda)$. The eigenfunctions of $A_\alpha$ in $\ker(A_\alpha - \lambda) \cap \ker(A_0-\lambda)$ are characterized with the help of Theorem~\ref{theorem_spectrum_A_0}. To formulate the result below recall for $\lambda \in \rho(A_0) \cup \sigma_\text{disc}(A_0)$ the definition of the single layer potential $\text{SL}(\lambda)$ from~\eqref{def_single_layer_potential},  the set $\mathcal{M}_\lambda$ from~\eqref{def_M_lambda},  the single layer boundary integral operator $\mathcal{S}(\lambda)$ from~\eqref{def_single_layer_bdd_int_op}, $\mathcal{S}_0(\lambda) := \mathcal{S}(\lambda) \upharpoonright L^2(\Sigma)$, and $R_{\mathcal{A}(\lambda_0)}$ from~\eqref{def_residual}. The following result allows us later in Section~\ref{SubSection:ApproxDelta} to apply boundary element methods to compute all discrete eigenvalues of $A_\alpha$ numerically.

\begin{prop} \label{proposition_Birman_Schwinger_delta}
  Let $\alpha \in L^\infty(\Sigma)$ be real valued and let $A_\alpha$ be defined by~\eqref{def_A_alpha}. Then the following is true for any $\lambda \in \rho(A_0) \cup \sigma_\textup{disc}(A_0)$:
  \begin{itemize}
    \item[(i)] $\ker(A_\alpha - \lambda) \ominus \ker(A_0-\lambda) \neq \{ 0 \}$ if and only if there exists $0 \neq \varphi \in \mathcal{M}_\lambda \cap L^2(\Sigma)$ such that $(I + \alpha \mathcal{S}_0(\lambda)) \varphi = 0$. Moreover,
    \begin{equation} \label{kernel_Birman_Schwinger_delta}
      \ker (A_\alpha - \lambda) \ominus \ker(A_0-\lambda) = \big\{ \textup{SL}(\lambda) \varphi: \varphi \in \mathcal{M}_\lambda \cap L^2(\Sigma), (I + \alpha \mathcal{S}_0(\lambda)) \varphi  =0 \big\}.
    \end{equation}
    \item[(ii)] If $\lambda \in \rho(A_0)$, then $\lambda \in \sigma_\textup{p}(A_\alpha)$ if and only if $-1 \in \sigma_\textup{p}(\alpha \mathcal{S}_0(\lambda))$.
    \item[(iii)] $\ker(A_\alpha - \lambda) \cap \ker(A_0-\lambda) \neq \{ 0 \}$ if and only if there exists $(\varphi, \psi)^\top \in \ran R_{\mathcal{A}(\lambda_0)}$ such that $\alpha \varphi = 0$.
    \item[(iv)] If $\lambda \notin \sigma_\textup{p}(A_\alpha) \cup \sigma(A_0)$, then $I + \alpha \mathcal{S}_0(\lambda)$ admits a bounded and everywhere defined inverse in $L^2(\Sigma)$.
  \end{itemize}
\end{prop}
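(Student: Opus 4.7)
The plan is to handle each of the four statements in turn, with (i) being the substantive Birman--Schwinger step and (ii)--(iv) following as consequences.

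For (i), I would start with the forward direction: given a nonzero $f \in \ker(A_\alpha-\lambda)\ominus\ker(A_0-\lambda)$, the defining relations of $A_\alpha$ yield that $f\in H^1(\mathbb{R}^n)$ (from $\gamma f_\textup{i}=\gamma f_\textup{e}$) and $(\mathcal{P}-\lambda)f=0$ on $\mathbb{R}^n\setminus\Sigma$. Lemma~\ref{lemma_single_layer_potential}(i) then produces $\varphi\in\mathcal{M}_\lambda$ and $g\in\ker(A_0-\lambda)$ with $f=\textup{SL}(\lambda)\varphi+g$. Since $\ran\textup{SL}(\lambda)\subset\ran P_\lambda\perp\ker(A_0-\lambda)$ and $f\perp\ker(A_0-\lambda)$, the component $g$ vanishes, so $f=\textup{SL}(\lambda)\varphi$. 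Applying the jump formula from Lemma~\ref{lemma_single_layer_potential}(ii) together with the transmission condition $\mathcal{B}_\nu f_\textup{e}-\mathcal{B}_\nu f_\textup{i}=\alpha\gamma f$ gives $-\varphi=\alpha\gamma f=\alpha\mathcal{S}(\lambda)\varphi$, hence $(I+\alpha\mathcal{S}(\lambda))\varphi=0$. At this point $\varphi\in H^{-1/2}(\Sigma)$ a priori, but the identity $\varphi=-\alpha\mathcal{S}(\lambda)\varphi$ combined with $\alpha\in L^\infty(\Sigma)$ and $\mathcal{S}(\lambda)\varphi\in H^{1/2}(\Sigma)\subset L^2(\Sigma)$ bootstraps $\varphi$ into $L^2(\Sigma)$, so $\mathcal{S}(\lambda)\varphi=\mathcal{S}_0(\lambda)\varphi$. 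For the converse, given $\varphi\in\mathcal{M}_\lambda\cap L^2(\Sigma)$ with $(I+\alpha\mathcal{S}_0(\lambda))\varphi=0$, set $f:=\textup{SL}(\lambda)\varphi$; Lemma~\ref{lemma_single_layer_potential}(i)--(ii) shows $f\in H^1_\mathcal{P}(\Omega_\textup{i})\oplus H^1_\mathcal{P}(\Omega_\textup{e})$, satisfies $\gamma f_\textup{i}=\gamma f_\textup{e}$, and $\mathcal{B}_\nu f_\textup{e}-\mathcal{B}_\nu f_\textup{i}=-\varphi=\alpha\mathcal{S}_0(\lambda)\varphi=\alpha\gamma f$, giving $f\in\ker(A_\alpha-\lambda)$. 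Nontriviality of $f$ follows from injectivity of $\textup{SL}(\lambda)$ on $\mathcal{M}_\lambda$, which is a consequence of the decomposition in Lemma~\ref{lemma_single_layer_potential}(i) together with the orthogonality of $\ran\textup{SL}(\lambda)$ to $\ker(A_0-\lambda)$.

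Statement (ii) is an immediate specialization of (i): for $\lambda\in\rho(A_0)$ we have $\ker(A_0-\lambda)=\{0\}$ and $\mathcal{M}_\lambda=H^{-1/2}(\Sigma)$, so $\lambda\in\sigma_\textup{p}(A_\alpha)$ if and only if $(I+\alpha\mathcal{S}_0(\lambda))$ has a nontrivial kernel in $L^2(\Sigma)$, equivalently $-1\in\sigma_\textup{p}(\alpha\mathcal{S}_0(\lambda))$.

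For (iii), a function $f\in\ker(A_0-\lambda)$ belongs to $\dom A_\alpha$ precisely when the condition $\mathcal{B}_\nu f_\textup{e}-\mathcal{B}_\nu f_\textup{i}=\alpha\gamma f$ is compatible with $f\in H^2(\mathbb{R}^n)$; since $H^2$-regularity forces $\mathcal{B}_\nu f_\textup{i}=\mathcal{B}_\nu f_\textup{e}$ by~\eqref{condition_H2}, the extra requirement reduces to $\alpha\gamma f=0$. Invoking Theorem~\ref{theorem_spectrum_A_0}, every element of $\{(\gamma f,\mathcal{B}_\nu f)^\top:f\in\ker(A_0-\lambda)\}$ is in $\ran R_{\mathcal{A}(\lambda_0)}$ and conversely, so the nontriviality of $\ker(A_\alpha-\lambda)\cap\ker(A_0-\lambda)$ translates exactly to the existence of $(\varphi,\psi)^\top\in\ran R_{\mathcal{A}(\lambda_0)}$ with $\alpha\varphi=0$.

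Finally, (iv) is a Fredholm-type argument: since $\lambda\in\rho(A_0)$, Lemma~\ref{lemma_single_layer_bdd_int_op}(i) tells us $\mathcal{S}_0(\lambda)$ maps $L^2(\Sigma)$ compactly into itself, so $\alpha\mathcal{S}_0(\lambda)$ is compact in $L^2(\Sigma)$ and $I+\alpha\mathcal{S}_0(\lambda)$ is Fredholm of index zero. By (ii), the hypothesis $\lambda\notin\sigma_\textup{p}(A_\alpha)$ forces injectivity, and hence bijectivity, of $I+\alpha\mathcal{S}_0(\lambda)$; bounded invertibility follows from the open mapping theorem. The main obstacle I anticipate is the bootstrap in (i) that promotes the density $\varphi$ from $H^{-1/2}(\Sigma)$ to $L^2(\Sigma)$ via the relation $\varphi=-\alpha\mathcal{S}(\lambda)\varphi$, together with the careful bookkeeping of the direct-sum decomposition in Lemma~\ref{lemma_single_layer_potential}(i) to conclude that the $\ker(A_0-\lambda)$-component of $f$ vanishes.
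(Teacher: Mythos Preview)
Your proposal is correct and follows essentially the same approach as the paper. The only cosmetic differences are that the paper invokes Lemma~\ref{lemma_single_layer_potential}(i) directly to write $f=\textup{SL}(\lambda)\varphi$ (leaving implicit the orthogonality argument you spell out to kill the $\ker(A_0-\lambda)$ component), and for nontriviality of $f$ in the converse of~(i) the paper cites the jump relation in Lemma~\ref{lemma_single_layer_potential}(ii) rather than phrasing it as injectivity of $\textup{SL}(\lambda)$ on $\mathcal{M}_\lambda$---but these are the same argument.
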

\begin{proof}
  (i) Assume first that $\ker(A_\alpha - \lambda) \ominus \ker(A_0-\lambda) \neq \{ 0 \}$ and let $f \in \ker(A_\alpha - \lambda) \ominus \ker(A_0-\lambda)$. Then by Lemma~\ref{lemma_single_layer_potential}~(i) there exists $\varphi \in \mathcal{M}_\lambda$ such that $f = \text{SL}(\lambda) \varphi$. Since $f \in \dom A_\alpha$ one has with Lemma~\ref{lemma_single_layer_potential}~(ii)
  \begin{equation*}
    \alpha \gamma f = \mathcal{B}_\nu f_\text{e} - \mathcal{B}_\nu f_\text{i} = \mathcal{B}_\nu (\text{SL}(\lambda) \varphi)_\text{e} - \mathcal{B}_\nu (\text{SL}(\lambda) \varphi)_\text{i} = -\varphi.
  \end{equation*}
  In particular, we deduce $\varphi \in L^2(\Sigma)$ and with $\gamma f = \mathcal{S}(\lambda) \varphi = \mathcal{S}_0(\lambda) \varphi$ this can be rewritten as $-\varphi = \alpha \mathcal{S}_0(\lambda) \varphi$. Moreover, the above considerations show
  \begin{equation} \label{kernel_Birman_Schwinger_delta1}
    \ker(A_\alpha - \lambda) \ominus \ker(A_0-\lambda) \subset \big\{ \textup{SL}(\lambda) \varphi: \varphi \in \mathcal{M}_\lambda, (I + \alpha \mathcal{S}_0(\lambda)) \varphi  =0 \big\}.
  \end{equation}
  
  Conversely, assume that there exists $0 \neq \varphi \in \mathcal{M}_\lambda \cap L^2(\Sigma)$ such that $(I + \alpha \mathcal{S}_0(\lambda)) \varphi = 0$. Then $f := \text{SL}(\lambda) \varphi \in H^1_\mathcal{P}(\mathbb{R}^n \setminus \Sigma) \cap H^1(\mathbb{R}^n)$ and it follows from Lemma~\ref{lemma_single_layer_potential}~(ii) that $f$ is nontrivial. Using the jump properties of $\text{SL}(\lambda) \varphi$ from Lemma~\ref{lemma_single_layer_potential}~(ii) we conclude further 
  \begin{equation*}
    \mathcal{B}_\nu f_\text{e} - \mathcal{B}_\nu f_\text{i} = - \varphi = \alpha \mathcal{S}_0(\lambda) \varphi = \alpha \gamma f,
  \end{equation*}
  where it was used that $\varphi$ belongs to the kernel of $I + \alpha \mathcal{S}_0(\lambda)$. Hence, $f \in \dom A_\alpha$. With Lemma~\ref{lemma_single_layer_potential}~(i) we conclude, as $\varphi \in \mathcal{M}_\lambda$, that
  \begin{equation*}
    (A_\alpha - \lambda) f = (\mathcal{P} - \lambda) (\text{SL}(\lambda) \varphi)_\text{i} \oplus (\mathcal{P} - \lambda) (\text{SL}(\lambda) \varphi)_\text{e}  =0,
  \end{equation*}
  which shows $\lambda \in \sigma_\textup{p}(A_\alpha)$ and
  \begin{equation} \label{kernel_Birman_Schwinger_delta2}
    \big\{ \textup{SL}(\lambda) \varphi: \varphi \in \mathcal{M}_\lambda, (I + \alpha \mathcal{S}_0(\lambda)) \varphi  =0 \big\} \subset \ker(A_\alpha - \lambda).
  \end{equation}
  Note that~\eqref{kernel_Birman_Schwinger_delta1} and~\eqref{kernel_Birman_Schwinger_delta2} give~\eqref{kernel_Birman_Schwinger_delta}. Hence, all claims in item~(i) are proved.
  
  Assertion~(ii) is a simple consequence of item~(i), as for $\lambda \notin \sigma(A_0)$ we have $\ker(A_0-\lambda)=\{ 0 \}$ and $\mathcal{M}_\lambda = H^{-1/2}(\Sigma)$.
  
  Statement~(iii) follows from Theorem~\ref{theorem_spectrum_A_0}. Note that $f \in \dom A_\alpha \cap \dom A_0$ if and only if $f \in H^2(\mathbb{R}^n)$ and $\alpha \gamma f = \mathcal{B}_\nu f_\text{e} - \mathcal{B}_\nu f_\text{i} = 0$. With Theorem~\ref{theorem_spectrum_A_0} it follows that $f \in \ker(A_\alpha - \lambda) \cap \ker(A_0 - \lambda)$ if and only if there exists $(\varphi, \psi)^\top = (\gamma f, \mathcal{B}_\nu f)^\top \in \ran R_{\mathcal{A}(\lambda)}$ such that $\alpha \varphi = 0$.
  
  (iv) Since $\mathcal{S}_0(\lambda)$ is compact in $L^2(\Sigma)$ by Lemma~\ref{lemma_single_layer_bdd_int_op}~(i), it follows  from Fredholm's alternative that $I + \alpha \mathcal{S}_0(\lambda)$ is bijective in $L^2(\Sigma)$ and admits a bounded inverse, if $0 \notin \sigma_\textup{p}(I + \alpha \mathcal{S}_0(\lambda))$. According to item~(ii) this is fulfilled, if $\lambda \notin \sigma_\textup{p}(A_\alpha) \cup \sigma(A_0)$.
\end{proof}

Now we are prepared to show the self-adjointness of the operator $A_\alpha$. In the proof of this result we show also a Krein type resolvent formula, which allows us to verify that the essential spectrum of $A_\alpha$ coincides with the essential spectrum of the unperturbed operator $A_0$. We remark that the resolvent formula in~\eqref{krein_delta} is well defined, as $I + \alpha \mathcal{S}_0(\lambda)$ is boundedly invertible in $L^2(\Sigma)$ for $\lambda \in \rho(A_0) \cap \rho(A_\alpha)$ by Proposition~\ref{proposition_Birman_Schwinger_delta}~(iv).

\begin{prop} \label{proposition_self_adjoint_delta}
  Let $\alpha \in L^\infty(\Sigma)$ be real valued, let the operators $A_0$, $\textup{SL}(\lambda)$, and $\mathcal{S}(\lambda)$, $\lambda \in \rho(A_0)$, be given by~\eqref{def_free_Op}, \eqref{def_single_layer_potential}, and~\eqref{def_single_layer_bdd_int_op}, respectively, and let $\mathcal{S}_0(\lambda) = \mathcal{S}(\lambda) \upharpoonright L^2(\Sigma)$. Then the operator $A_\alpha$ defined by~\eqref{def_A_alpha} is self-adjoint in $L^2(\mathbb{R}^n)$ and the following is true:
  \begin{itemize}
    \item[(i)] For $\lambda \in \rho(A_0) \cap \rho(A_\alpha)$ the resolvent of $A_\alpha$ is given by
    \begin{equation} \label{krein_delta}
      (A_\alpha - \lambda)^{-1} = (A_0 - \lambda)^{-1} - \textup{SL}(\lambda) \big( I + \alpha \mathcal{S}_0(\lambda) \big)^{-1} \alpha \gamma(A_0 - \lambda)^{-1}.
    \end{equation}
    \item[(ii)] $\sigma_\textup{ess}(A_\alpha) = \sigma_\textup{ess}(A_0)$.
  \end{itemize}
\end{prop}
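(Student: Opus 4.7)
The plan is to combine the symmetry established in Lemma~\ref{lemma_symmetric_delta} with a direct verification that the right-hand side of~\eqref{krein_delta} is a bounded right inverse of $A_\alpha-\lambda$ for at least one nonreal $\lambda$; together these yield self-adjointness and, simultaneously, item~(i). Since $A_\alpha$ is symmetric it has no nonreal eigenvalues, so for any $\lambda\in\mathbb{C}\setminus\mathbb{R}$ one has $\lambda\in\rho(A_0)\setminus\sigma_\text{p}(A_\alpha)$, and Proposition~\ref{proposition_Birman_Schwinger_delta}(iv) supplies a bounded inverse $M(\lambda):=(I+\alpha\mathcal{S}_0(\lambda))^{-1}$ in $L^2(\Sigma)$. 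Given $f\in L^2(\mathbb{R}^n)$, I would set $u:=(A_0-\lambda)^{-1}f\in H^2(\mathbb{R}^n)$, $\varphi:=M(\lambda)\alpha\gamma u\in L^2(\Sigma)$, and $v:=u-\text{SL}(\lambda)\varphi$, which is precisely the action of the proposed $R(\lambda)$ on $f$.

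Verifying that $v\in\dom A_\alpha$ is then bookkeeping with the mapping and jump identities collected in Section~\ref{section_surface_potentials}. Lemma~\ref{lemma_single_layer_potential}(i) and $u\in H^2(\mathbb{R}^n)$ give $v_\text{i/e}\in H^1_\mathcal{P}(\Omega_\text{i/e})$, while Lemma~\ref{lemma_single_layer_potential}(ii) ensures $\gamma v_\text{i}=\gamma v_\text{e}$ and $\mathcal{B}_\nu v_\text{e}-\mathcal{B}_\nu v_\text{i}=\varphi$. The defining identity $(I+\alpha\mathcal{S}_0(\lambda))\varphi=\alpha\gamma u$ rewrites as $\alpha\gamma v=\alpha\gamma u-\alpha\mathcal{S}_0(\lambda)\varphi=\varphi$, so the transmission condition in~\eqref{def_A_alpha} holds. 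Finally, $(\mathcal{P}-\lambda)u=f$ on $\mathbb{R}^n$ and, since $\lambda\in\rho(A_0)$ implies $\mathcal{M}_\lambda=H^{-1/2}(\Sigma)$, Lemma~\ref{lemma_single_layer_potential}(i) gives $(\mathcal{P}-\lambda)\text{SL}(\lambda)\varphi=0$ on $\mathbb{R}^n\setminus\Sigma$, so $(A_\alpha-\lambda)v=f$. This proves $\ran(A_\alpha-\lambda)=L^2(\mathbb{R}^n)$ for every nonreal $\lambda$, hence $A_\alpha$ is self-adjoint; the same computation repeated at any $\lambda\in\rho(A_0)\cap\rho(A_\alpha)$ yields~\eqref{krein_delta}.

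For (ii) I would use~\eqref{krein_delta} to write, at some $\lambda\in\rho(A_0)\cap\rho(A_\alpha)$,
\begin{equation*}
(A_\alpha-\lambda)^{-1}-(A_0-\lambda)^{-1}=-\text{SL}(\lambda)\bigl(I+\alpha\mathcal{S}_0(\lambda)\bigr)^{-1}\alpha\gamma(A_0-\lambda)^{-1},
\end{equation*}
and show that this difference is compact on $L^2(\mathbb{R}^n)$. The factorization $L^2(\mathbb{R}^n)\to H^1(\mathbb{R}^n)\to H^{1/2}(\Sigma)\hookrightarrow L^2(\Sigma)$ (resolvent of $A_0$, Dirichlet trace, compact embedding on the compact hypersurface $\Sigma$) makes $\gamma(A_0-\lambda)^{-1}\colon L^2(\mathbb{R}^n)\to L^2(\Sigma)$ compact; post-composition with the bounded operators $\alpha$, $M(\lambda)$, and $\text{SL}(\lambda)\colon L^2(\Sigma)\to L^2(\mathbb{R}^n)$ preserves compactness. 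Weyl's theorem on invariance of the essential spectrum under compact perturbations of the resolvent then gives $\sigma_\text{ess}(A_\alpha)=\sigma_\text{ess}(A_0)$. The only genuinely subtle point in the whole argument is the ordering of the self-adjointness step: the invertibility of $I+\alpha\mathcal{S}_0(\lambda)$ is obtained from Proposition~\ref{proposition_Birman_Schwinger_delta}(iv) only after using symmetry to exclude nonreal eigenvalues of $A_\alpha$, which is why one must work first with a nonreal $\lambda$ and only afterwards extend the resolvent identity to all $\lambda\in\rho(A_0)\cap\rho(A_\alpha)$.
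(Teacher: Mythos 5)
Your proposal is correct and follows essentially the same route as the paper: a direct verification that the right-hand side of~\eqref{krein_delta} maps $f$ into $\dom A_\alpha$ with $(A_\alpha-\lambda)g=f$ (using Lemma~\ref{lemma_single_layer_potential} and Proposition~\ref{proposition_Birman_Schwinger_delta}~(iv)), which simultaneously gives the range condition for self-adjointness and the Krein formula, followed by the same compactness factorization through the trace and the compact embedding $H^{1/2}(\Sigma)\hookrightarrow L^2(\Sigma)$ plus Weyl's theorem for~(ii). Your explicit remark on the ordering (symmetry excludes nonreal eigenvalues before Proposition~\ref{proposition_Birman_Schwinger_delta}~(iv) is invoked) is exactly the logic implicit in the paper's choice of $\lambda\in\mathbb{C}\setminus(\sigma(A_0)\cup\sigma_\textup{p}(A_\alpha))$.
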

\begin{proof}
  In order to prove that $A_\alpha$ is self-adjoint, we show that $\ran (A_\alpha - \lambda) = L^2(\mathbb{R}^n)$ for $\lambda \in \mathbb{C} \setminus (\sigma(A_0) \cup \sigma_\textup{p}(A_\alpha))$. Let $f \in L^2(\mathbb{R}^n)$ be fixed and define
  \begin{equation*} 
    g := (A_0 - \lambda)^{-1} f - \textup{SL}(\lambda) \big( I + \alpha \mathcal{S}_0(\lambda) \big)^{-1} \alpha \gamma(A_0 - \lambda)^{-1} f.
  \end{equation*}
  Note that $g$ is well defined, as $I + \alpha \mathcal{S}_0(\lambda)$ admits a bounded inverse in $L^2(\Sigma)$ for $\lambda \notin \sigma(A_0) \cup \sigma_\textup{p}(A_\alpha)$ by Proposition~\ref{proposition_Birman_Schwinger_delta}~(iv).
  We are going to show that $g \in \dom A_\alpha$ and $(A_\alpha - \lambda) g = f$. This shows then $\ran (A_\alpha - \lambda) = L^2(\mathbb{R}^n)$ and also~\eqref{krein_delta}.
  
  Since $(A_0 - \lambda)^{-1} f \in H^2(\mathbb{R}^n)$ by Proposition~\ref{proposition_resolvent}, we conclude $\gamma (A_0 - \lambda)^{-1} f \in L^2(\Sigma)$ and further from Proposition~\ref{proposition_Birman_Schwinger_delta}~(ii) and Lemma~\ref{lemma_single_layer_potential} that
  \begin{equation*}
    \textup{SL}(\lambda) \big( I + \alpha \mathcal{S}_0(\lambda) \big)^{-1} \alpha \gamma(A_0 - \lambda)^{-1} f \in H^1_\mathcal{P}(\mathbb{R}^n \setminus \Sigma) \cap H^1(\mathbb{R}^n).
  \end{equation*}
  Therefore, also $g \in H^1_\mathcal{P}(\mathbb{R}^n \setminus \Sigma) \cap H^1(\mathbb{R}^n)$. Moreover, we have by Lemma~\ref{lemma_single_layer_potential}~(ii)
  \begin{equation*}
    \begin{split}
      \mathcal{B}_\nu g_\text{e} - \mathcal{B}_\nu g_\text{i} - \alpha \gamma g &= \big( I + \alpha \mathcal{S}_0(\lambda) \big)^{-1} \alpha \gamma(A_0 - \lambda)^{-1} f - \alpha \gamma (A_0 - \lambda)^{-1} f \\
      &\qquad + \alpha \mathcal{S}_0(\lambda) \big( I + \alpha \mathcal{S}_0(\lambda) \big)^{-1} \alpha \gamma(A_0 - \lambda)^{-1} f = 0,
    \end{split}
  \end{equation*}
  which shows $g \in \dom A_\alpha$. Next, we have with $\varphi := ( I + \alpha \mathcal{S}_0(\lambda) )^{-1} \alpha \gamma(A_0 - \lambda)^{-1} f$
  \begin{equation*}
    \begin{split}
      (A_\alpha - \lambda) g &= (\mathcal{P} - \lambda) (A_0-\lambda)^{-1} f - (\mathcal{P}-\lambda) (\text{SL}(\lambda) \varphi)_\text{i} \oplus (\mathcal{P}-\lambda) (\text{SL}(\lambda) \varphi)_\text{e} = f,
    \end{split}
  \end{equation*}
  where~\eqref{range_single_layer_potential} for $\lambda \in \rho(A_0)$ was used in the last step.
  With the previous considerations we deduce now the self-adjointness of $A_\alpha$ and~\eqref{krein_delta}.
  
  It remains to show assertion~(ii). Let $\lambda \in \mathbb{C} \setminus \mathbb{R}$ be fixed. First, due to the mapping properties of the resolvent of $A_0$ from Proposition~\ref{proposition_resolvent} and the mapping properties of $\gamma$ from~\eqref{Dirichlet_trace} the operator
  \begin{equation*}
    \gamma (A_0-\lambda)^{-1}: L^2(\mathbb{R}^n) \rightarrow H^{3/2}(\Sigma) \hookrightarrow H^{1/2}(\Sigma)
  \end{equation*}
  is bounded. Since $H^{1/2}(\Sigma)$ is compactly embedded in $L^2(\Sigma)$ this and Proposition~\ref{proposition_Birman_Schwinger_delta}~(iv) yield that
  \begin{equation*}
    \big( I + \alpha \mathcal{S}_0(\lambda) \big)^{-1} \alpha \gamma(A_0 - \lambda)^{-1}: L^2(\mathbb{R}^n) \rightarrow L^2(\Sigma)
  \end{equation*}
  is compact. As $L^2(\Sigma)$ is boundedly embedded in $H^{-1/2}(\Sigma)$ and $\text{SL}(\lambda): H^{-1/2}(\Sigma) \rightarrow L^2(\mathbb{R}^n)$ is bounded, we conclude that
  \begin{equation*}
    (A_\alpha - \lambda)^{-1} - (A_0 - \lambda)^{-1} = - \textup{SL}(\lambda) \big( I + \alpha \mathcal{S}_0(\lambda) \big)^{-1} \alpha \gamma(A_0 - \lambda)^{-1}
  \end{equation*}
  is compact in $L^2(\mathbb{R}^n)$. Therefore, with the Weyl theorem we get $\sigma_\text{ess}(A_\alpha) = \sigma_\text{ess}(A_0)$.
\end{proof}

By combining the results from Proposition~\ref{proposition_Birman_Schwinger_delta} and Proposition~\ref{proposition_self_adjoint_delta} we can prove now the following proposition about the inverse of the Birman-Schwinger operator $I + \alpha \mathcal{S}_0(\lambda)$, which will be of great importance for the numerical calculation of the discrete eigenvalues of $A_\alpha$ via boundary element methods.

\begin{prop} \label{proposition_Birman_Schwinger_kernel_delta}
  Let $\alpha \in L^\infty(\Sigma)$ be real valued and let $A_\alpha$ be defined by~\eqref{def_A_alpha}. Then the map 
  \begin{equation*}
    \rho(A_\alpha) \cap \rho(A_0) \ni \lambda \mapsto \big(I + \alpha \mathcal{S}_0(\lambda) \big)^{-1}
  \end{equation*}
  can be extended to a holomorphic operator-valued function, which is holomorphic in $\rho(A_\alpha)$ with respect to the toplogy in $\mathcal{B}(L^2(\Sigma))$. Moreover, for $\lambda_0 \notin \sigma_\textup{ess}(A_\alpha) = \sigma_\textup{ess}(A_0)$ one has $\ker(A_\alpha - \lambda_0) \ominus \ker(A_0-\lambda_0) \neq \{ 0 \}$ if and only if $(I + \alpha \mathcal{S}_0(\lambda) )^{-1}$ has a pole at $\lambda_0$ and
  \begin{equation} \label{kernel_inverse_delta}
    \ker (A_\alpha - \lambda_0) \ominus \ker (A_0-\lambda_0) = \big\{ \textup{SL}(\lambda_0) \varphi: \lim_{\lambda \rightarrow \lambda_0} (\lambda - \lambda_0) (I + \alpha \mathcal{S}_0(\lambda))^{-1} \varphi \neq 0 \big\}.
  \end{equation}
\end{prop}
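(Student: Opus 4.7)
The plan is to express the Birman--Schwinger inverse directly in terms of the resolvent of $A_\alpha$, which by Proposition~\ref{proposition_self_adjoint_delta} is already holomorphic throughout $\rho(A_\alpha)$. Starting from Krein's formula~\eqref{krein_delta} applied to $\gamma^*\varphi$ with $\varphi\in L^2(\Sigma)$, and using $(A_0-\lambda)^{-1}\gamma^*=\textup{SL}(\lambda)$ together with $\gamma(A_0-\lambda)^{-1}\gamma^*=\mathcal{S}_0(\lambda)$, a short algebraic rearrangement first produces the intertwining identity
\begin{equation*}
(A_\alpha-\lambda)^{-1}\gamma^*\varphi=\textup{SL}(\lambda)(I+\alpha\mathcal{S}_0(\lambda))^{-1}\varphi,\qquad \lambda\in\rho(A_0)\cap\rho(A_\alpha),
\end{equation*}
and then, after applying $\alpha\gamma$ and exploiting $\alpha\mathcal{S}_0(\lambda)(I+\alpha\mathcal{S}_0(\lambda))^{-1}=I-(I+\alpha\mathcal{S}_0(\lambda))^{-1}$, the key identity
\begin{equation*}
(I+\alpha\mathcal{S}_0(\lambda))^{-1}=I-\alpha\gamma(A_\alpha-\lambda)^{-1}\gamma^*.
\end{equation*}

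To obtain the extension, I would verify that the right-hand side is holomorphic on all of $\rho(A_\alpha)$ in the $\mathcal{B}(L^2(\Sigma))$-topology. Because $\dom A_\alpha\subset H^1(\mathbb{R}^n)$, the resolvent $(A_\alpha-\lambda)^{-1}$ maps $L^2(\mathbb{R}^n)$ into $H^1(\mathbb{R}^n)$ and depends holomorphically on $\lambda\in\rho(A_\alpha)$; duality combined with a resolvent-identity argument as in the proof of Proposition~\ref{proposition_resolvent} yields a holomorphic extension $(A_\alpha-\lambda)^{-1}:H^{-1}(\mathbb{R}^n)\to H^1(\mathbb{R}^n)$ on $\rho(A_\alpha)$. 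Composing with $\gamma^*:L^2(\Sigma)\hookrightarrow H^{-1/2}(\Sigma)\to H^{-1}(\mathbb{R}^n)$, with $\gamma:H^1(\mathbb{R}^n)\to H^{1/2}(\Sigma)\hookrightarrow L^2(\Sigma)$, and with multiplication by $\alpha\in L^\infty(\Sigma)$, makes $\alpha\gamma(A_\alpha-\lambda)^{-1}\gamma^*\in\mathcal{B}(L^2(\Sigma))$ holomorphic on $\rho(A_\alpha)$. Unique analytic continuation then identifies $I-\alpha\gamma(A_\alpha-\lambda)^{-1}\gamma^*$ with the sought-after extension of $(I+\alpha\mathcal{S}_0(\lambda))^{-1}$ throughout $\rho(A_\alpha)$.

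For the pole characterization, I would analyze the Laurent expansion at an isolated point $\lambda_0\notin\sigma_\textup{ess}(A_\alpha)$. By the spectral theorem, near $\lambda_0$ one has $(A_\alpha-\lambda)^{-1}=-(\lambda-\lambda_0)^{-1}\widehat{P}^\alpha_{\lambda_0}+\mathcal{F}_\alpha(\lambda)$, where $\widehat{P}^\alpha_{\lambda_0}$ denotes the orthogonal projection onto $\ker(A_\alpha-\lambda_0)$ and $\mathcal{F}_\alpha$ is holomorphic at $\lambda_0$. Consequently
\begin{equation*}
\lim_{\lambda\to\lambda_0}(\lambda-\lambda_0)(I+\alpha\mathcal{S}_0(\lambda))^{-1}\varphi=\alpha\gamma\widehat{P}^\alpha_{\lambda_0}\gamma^*\varphi=\sum_{k}(\varphi,\gamma e_k)\,\alpha\gamma e_k
\end{equation*}
for any orthonormal basis $\{e_k\}$ of $\ker(A_\alpha-\lambda_0)$. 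This residue vanishes identically iff $\alpha\gamma e_k=0$ for every $k$; but combined with $e_k\in\dom A_\alpha$ and the transmission conditions in~\eqref{def_A_alpha}, together with~\eqref{condition_H2}, this forces $e_k\in H^2(\mathbb{R}^n)=\dom A_0$, hence $e_k\in\ker(A_0-\lambda_0)$. Therefore the pole disappears precisely when $\ker(A_\alpha-\lambda_0)\subseteq\ker(A_0-\lambda_0)$, i.e.\ when $\ker(A_\alpha-\lambda_0)\ominus\ker(A_0-\lambda_0)=\{0\}$.

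To finish with~\eqref{kernel_inverse_delta}, passing to the limit in the intertwining identity of the first paragraph yields
\begin{equation*}
\textup{SL}(\lambda_0)\lim_{\lambda\to\lambda_0}(\lambda-\lambda_0)(I+\alpha\mathcal{S}_0(\lambda))^{-1}\varphi=-\widehat{P}^\alpha_{\lambda_0}\gamma^*\varphi\in\ker(A_\alpha-\lambda_0),
\end{equation*}
so the inclusion $\subseteq$ of~\eqref{kernel_inverse_delta} follows from the discussion of the residue above, while the reverse inclusion is obtained by combining Proposition~\ref{proposition_Birman_Schwinger_delta}(i) (every element of $\ker(A_\alpha-\lambda_0)\ominus\ker(A_0-\lambda_0)$ arises as $\textup{SL}(\lambda_0)\psi$ with $\psi\in\mathcal{M}_{\lambda_0}\cap L^2(\Sigma)$ and $(I+\alpha\mathcal{S}_0(\lambda_0))\psi=0$) with the fact that $\alpha\gamma e_k'\neq0$ for any such nontrivial eigenfunction $e_k'$, so that a suitable $\varphi\in L^2(\Sigma)$ producing $\widehat{P}^\alpha_{\lambda_0}\gamma^*\varphi=-e_k'$ exists. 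I expect the main obstacle to be the careful justification of the holomorphic extension $(A_\alpha-\lambda)^{-1}:H^{-1}(\mathbb{R}^n)\to H^1(\mathbb{R}^n)$ on $\rho(A_\alpha)$, in particular showing that the composition $\gamma(A_\alpha-\lambda)^{-1}\gamma^*$ inherits holomorphy in the $L^2(\Sigma)$-operator norm across the points of $\sigma_\textup{disc}(A_0)\cap\rho(A_\alpha)$ where $\mathcal{S}_0(\lambda)$ itself develops a finite-rank singularity.
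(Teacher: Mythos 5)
Your route is essentially the paper's: Krein's formula \eqref{krein_delta} yields $(A_\alpha-\lambda)^{-1}\gamma^*=\textup{SL}(\lambda)\big(I+\alpha\mathcal{S}(\lambda)\big)^{-1}$ on $\rho(A_0)\cap\rho(A_\alpha)$, one then reads the Birman--Schwinger inverse off a boundary-sandwiched resolvent of $A_\alpha$ (the paper applies the jump $[\mathcal{B}_\nu]_\Sigma$ and obtains $(I+\alpha\mathcal{S}(\lambda))^{-1}=[\mathcal{B}_\nu]_\Sigma(A_\alpha-\lambda)^{-1}\gamma^*$ directly, whereas you apply $\alpha\gamma$ and obtain $I$ minus the inverse --- both are fine), extends holomorphically to $\rho(A_\alpha)$ via the resolvent identity, and characterizes the poles through Proposition~\ref{proposition_Birman_Schwinger_delta}~(i); your explicit residue computation with the spectral projection $\widehat P^{\alpha}_{\lambda_0}$ is in fact a more detailed version of the paper's terse final step.

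The one step that does not close as written is exactly the one you flag. Duality gives $(A_\alpha-\lambda)^{-1}\colon H^{-1}(\mathbb{R}^n)\to L^2(\mathbb{R}^n)$, and the resolvent identity shows that \emph{differences} of resolvents map $H^{-1}(\mathbb{R}^n)$ into $H^1(\mathbb{R}^n)$, but this does not yield a bounded holomorphic $(A_\alpha-\lambda)^{-1}\colon H^{-1}(\mathbb{R}^n)\to H^1(\mathbb{R}^n)$ for each individual $\lambda$ without an anchor point (which would require the quadratic-form realization of $A_\alpha$, never set up in the paper). The paper avoids the naive composition altogether: for $\lambda_0\in\rho(A_\alpha)$ and $\lambda\in\rho(A_\alpha)\cap\rho(A_0)$ it expresses the sandwiched operator at $\lambda_0$ as the one at $\lambda$ --- already known to be bounded, being the Birman--Schwinger inverse there --- plus $(\lambda_0-\lambda)$ times a correction that factors through $L^2(\mathbb{R}^n)$ with each factor separately bounded; the identical device works verbatim for $\alpha\gamma(A_\alpha-\cdot)^{-1}\gamma^*$. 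A second, smaller point: in your final display you replace $\textup{SL}(\lambda)$ by $\textup{SL}(\lambda_0)$ under the limit, but for $\lambda_0\in\sigma_{\textup{disc}}(A_0)$ the map $\lambda\mapsto\textup{SL}(\lambda)=(A_0-\lambda)^{-1}\gamma^*$ itself has a pole at $\lambda_0$ and does not converge to the paramatrix-based $\textup{SL}(\lambda_0)=\mathcal{G}(\lambda_0)\gamma^*$, so this interchange needs an extra justification (it can be repaired because the relevant residues lie in $\mathcal{M}_{\lambda_0}$). Both issues are repairable and do not change the overall strategy.
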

\begin{proof}
  The proof is split into 4 steps.
  
  {\it Step 1:} Define the map 
  \begin{equation*}
    [\mathcal{B}_\nu]_\Sigma: H^1_\mathcal{P}(\mathbb{R}^n \setminus \Sigma) \rightarrow H^{-1/2}(\Sigma), \quad
    [\mathcal{B}_\nu]_\Sigma f := \mathcal{B}_\nu f_\text{i} - \mathcal{B}_\nu f_\text{e},
  \end{equation*}
  and let $\lambda \in \rho(A_\alpha) \cap \rho(A_0)$ be fixed. We show that
  \begin{equation} \label{Birman_Schwinger_inverse_delta}
    \big(I + \alpha \mathcal{S}(\lambda) \big)^{-1} = [\mathcal{B}_\nu]_\Sigma (A_\alpha - \lambda)^{-1} \gamma^*.
  \end{equation}
Note that $(I+\alpha\mathcal{S}(\lambda))^{-1}$ is well defined by the same reasons as in Proposition~\ref{proposition_Birman_Schwinger_delta}~(iv), as $\alpha\mathcal{S}(\lambda)\in\mathcal{B}(H^{-1/2}(\Sigma),L^2(\Sigma))$ is compact in $H^{-1/2}(\Sigma)$.   In particular, this implies that $[\mathcal{B}_\nu]_\Sigma (A_\alpha - \lambda)^{-1} \gamma^* \in \mathcal{B}(H^{-1/2}(\Sigma))$. To show~\eqref{Birman_Schwinger_inverse_delta} we note first that~\eqref{krein_delta} implies
  \begin{equation*}
    \gamma(A_\alpha - \overline{\lambda})^{-1} = \gamma(A_0-\overline{\lambda})^{-1} - \mathcal{S}(\overline{\lambda}) \big(I + \alpha \mathcal{S}(\overline{\lambda}) \big)^{-1} \gamma (A_0-\overline{\lambda})^{-1},
  \end{equation*}
  which implies, after taking the dual,
  \begin{equation*}
    (A_\alpha - \lambda)^{-1} \gamma^* = \text{SL}(\lambda) - \text{SL}(\lambda) \alpha \big(I + \mathcal{S}(\lambda) \alpha  \big)^{-1} \mathcal{S}(\lambda).
  \end{equation*}
  Using
  \begin{equation*}
    \begin{split}
    \alpha &\big(I + \mathcal{S}(\lambda) \alpha  \big)^{-1} - \big(I + \alpha \mathcal{S}(\lambda) \big)^{-1} \alpha
        \\
    &= \big(I + \alpha \mathcal{S}(\lambda) \big)^{-1} \big[ \big(I + \alpha \mathcal{S}(\lambda) \big) \alpha - \alpha \big(I + \mathcal{S}(\lambda) \alpha  \big) \big] \big(I + \mathcal{S}(\lambda) \alpha  \big)^{-1} = 0,
    \end{split}
  \end{equation*}
  we can simplify the last expression to
  \begin{equation*}
    \begin{split}
      (A_\alpha - \lambda)^{-1} \gamma^* &= \text{SL}(\lambda) - \text{SL}(\lambda) \big(I + \alpha \mathcal{S}(\lambda) \big)^{-1} \alpha \mathcal{S}(\lambda) \\
      &= \text{SL}(\lambda) \big(I + \alpha \mathcal{S}(\lambda) \big)^{-1}  \big[ \big(I + \alpha \mathcal{S}(\lambda) \big) - \alpha \mathcal{S}(\lambda) \big] \\
      &= \text{SL}(\lambda) \big(I + \alpha \mathcal{S}(\lambda) \big)^{-1}.
    \end{split}
  \end{equation*}
  In particular, by Lemma~\ref{lemma_single_layer_potential} the right hand side belongs to $\mathcal{B}(H^{-1/2}(\Sigma), H^1_\mathcal{P}(\mathbb{R}^n \setminus \Sigma))$ and thus, the same must be true for $(A_\alpha - \lambda)^{-1} \gamma^*$. Therefore, we are allowed to apply $[\mathcal{B}_\nu]_\Sigma$ and the last formula shows, with the help of Lemma~\ref{lemma_single_layer_potential}~(ii), the relation~\eqref{Birman_Schwinger_inverse_delta}.

  {\it Step 2:} We show that $[\mathcal{B}_\nu]_\Sigma (A_\alpha - \lambda)^{-1} \gamma^* \in \mathcal{B}(H^{-1/2}(\Sigma))$ for any $\lambda \in \rho(A_\alpha)$ and that $\rho(A_\alpha) \ni \lambda \mapsto [\mathcal{B}_\nu]_\Sigma (A_\alpha - \lambda)^{-1} \gamma^*$ is holomorphic in $\mathcal{B}(H^{-1/2}(\Sigma))$.
  
  First, we note that $\dom A_\alpha \subset H^1(\mathbb{R}^n) \cap H^1_\mathcal{P}(\mathbb{R}^n \setminus \Sigma)$ implies that
  \begin{equation*}
    (A_\alpha - \lambda)^{-1} \in \mathcal{B}(L^2(\mathbb{R}^n), H^1(\mathbb{R}^n)) \quad \text{and} \quad 
    (A_\alpha - \lambda)^{-1} \in \mathcal{B}(L^2(\mathbb{R}^n), H^1_\mathcal{P}(\mathbb{R}^n \setminus \Sigma)),
  \end{equation*}
  see~\eqref{mapping_properties_resolvent} for a similar argument. Hence, by duality also 
  \begin{equation*}
    (A_\alpha - \lambda)^{-1} \in \mathcal{B}(H^{-1}(\mathbb{R}^n), L^2(\mathbb{R}^n)).
  \end{equation*}
  With the resolvent identity this implies for any $\lambda_0 \in \rho(A_\alpha)$ and $\lambda \in \rho(A_\alpha) \cap \rho(A_0)$, in a similar way as in the proof of Proposition~\ref{proposition_resolvent}, first that
  \begin{equation*}
    \begin{split}
      [\mathcal{B}_\nu]_\Sigma (A_\alpha - \lambda_0)^{-1} \gamma^* &- [\mathcal{B}_\nu]_\Sigma (A_\alpha - \lambda)^{-1} \gamma^* \\
      &= (\lambda_0 - \lambda) [\mathcal{B}_\nu]_\Sigma (A_\alpha - \lambda_0)^{-1} (A_\alpha - \lambda)^{-1} \gamma^*,
    \end{split}
  \end{equation*}
  which yields first with~\eqref{Birman_Schwinger_inverse_delta} that $[\mathcal{B}_\nu]_\Sigma (A_\alpha - \lambda_0)^{-1} \gamma^* \in \mathcal{B}(H^{-1/2}(\Sigma))$ and in a second step, that $[\mathcal{B}_\nu]_\Sigma (A_\alpha - \lambda_0)^{-1} \gamma^*$ is holomorphic in $\mathcal{B}(H^{-1/2}(\Sigma))$, which shows the claim of this step.

  {\it Step 3:} With the help of~\eqref{Birman_Schwinger_inverse_delta} and the result from {\it Step 2} we know that $(I + \alpha \mathcal{S}(\lambda))^{-1}$ can be extended to a holomorphic map in $\mathcal{B}(H^{-1/2}(\Sigma))$ for $\lambda \in \rho(A_\alpha)$. By duality we deduce that $(I + \alpha \mathcal{S}(\lambda))^{-1}$ is holomorphic in $\mathcal{B}(H^{1/2}(\Sigma))$ for $\lambda \in \rho(A_\alpha)$. Finally, by interpolation we conclude that $(I + \alpha \mathcal{S}(\lambda))^{-1}$ is also holomorphic in $\mathcal{B}(L^2(\Sigma))$ for $\lambda \in \rho(A_\alpha)$.
  
  {\it Step 4:} Finally, it follows from Proposition~\ref{proposition_Birman_Schwinger_delta}~(i) that $\ker(A_\alpha - \lambda_0) \ominus \ker(A_0-\lambda_0) \neq \{ 0 \}$ if and only if there exists $\varphi \in \mathcal{M}_{\lambda_0}$ such that $(I + \alpha \mathcal{S}_0(\lambda_0)) \varphi = 0$, i.e. if and only if $\lambda \mapsto (I + \alpha \mathcal{S}_0(\lambda))^{-1}$ has a pole at $\lambda_0$. This shows immediately \eqref{kernel_inverse_delta}.
\end{proof}

\newcommand{\triang}{\mathcal{T}}

\subsection{Numerical approximation of discrete eigenvalues of $A_\alpha$}\label{SubSection:ApproxDelta}
For the numerical approximation of the discrete eigenvalues of $A_\alpha$ and the corresponding eigenfunctions we consider boundary element methods. 
These require the knowledge of an explicit integral representation of the paramatrix ${\mathcal G}(\lambda)$ of ${\mathcal P}-\lambda$ or at least a good approximation of the boundary integral operator $\mathcal{S}_0(\lambda)$.
This is for example the case when ${\mathcal P}$ has constant coefficients.
We restrict ourselves to three-dimensional domains $\Omega_\text{i} \subset \mathbb{R}^3$ in order to keep the presentation simple.  The presented procedure  and the obtained convergence results can be straightforwardly transfered  to domains with general space dimensions. The discrete eigenvalues of $A_\alpha$ split into the eigenvalues of the nonlinear eigenvalue problem 
\begin{equation}\label{Eq:EigContinuous}
 (I + \alpha \mathcal{S}_0(\lambda)) \varphi = 0
\end{equation}
in $\rho(A_0)$ and into distinct discrete eigenvalues of $A_0$, which can be characterized on the one hand as poles of the operator-valued functions ${\mathcal A}(\cdot)$ having the property specified in Proposition~\ref{proposition_Birman_Schwinger_delta}~(iii) and  on the other hand as poles of $[I+\alpha{\mathcal S}_0(\cdot)]^{-1}$ lying in $\sigma_\text{disc}(A_0)$.

In the following we will first consider the case that there are no discrete eigenvalues of $A_0$, that means that all discrete eigenvalues can be characterized as eigenvalues of the nonlinear eigenvalue problem~\eqref{Eq:EigContinuous}. This is for example the case when ${\mathcal P}$ has constant or periodic coefficients.  Afterwards the general case will be treated. 
For both cases we will present  convergence results of the boundary element approximations of the discrete eigenvalues of $A_\alpha$. 
In the first situation a complete numerical analysis is provided, whereas in the general case for the approximation of the eigenvalues in $\sigma_\text{disc}(A_0)$ the convergence theory of Section~\ref{SectionGalerkinApprox} can not be applied.   
In addition  we will  address the numerical solution of the discretized problems which  results in the determination of the poles of  matrix-valued functions.  For that the so-called contour integral method is suggested~\cite{Beyn} which is a reliable method for finding all poles of a meromorphic matrix-valued function  inside a given contour in the complex plane.      

\subsubsection{Approximation of discrete eigenvalues of $A_\alpha$ for the case $\sigma_\textnormal{disc}(A_0)=\varnothing$}\label{SubsubsectionApproxDeltaEinfach} If $\sigma_\text{disc}(A_0)=\varnothing$, then, by  Proposition~\ref{proposition_Birman_Schwinger_delta}~(ii), $\lambda_0\in\C\setminus\sigma_\text{ess}(A_\alpha)$ is a discrete eigenvalue  of $A_\alpha$ if and only if it is an eigenvalue of the nonlinear eigenvalue problem~\eqref{Eq:EigContinuous}.  Any conforming Galerkin method for the approximation of the eigenvalue problem~\eqref{Eq:EigContinuous} is according to the abstract results in Section~\ref{SectionGalerkinApprox} a convergent method since $\rho(A_0)\ni\lambda\mapsto(I + \alpha \mathcal{S}_0(\lambda))$ is by Lemma~\ref{lemma_single_layer_bdd_int_op}~(iii) holomorphic in ${\mathcal B}(L^2(\Sigma))$ and $(I + \alpha \mathcal{S}_0(\lambda))$ satisfies for $\lambda\in\rho(A_0)$   G\r{a}rding's  inequality of the form~\eqref{Inequality:Garding} because $\alpha \mathcal{S}_0(\lambda):L^2(\Sigma)\rightarrow L^2(\Sigma)$ is  compact, see Lemma~\ref{lemma_single_layer_bdd_int_op}~(i).

For the presentation of the boundary element method for the approximation of the discrete eigenvalues of $A_\alpha$  we want to consider first the case that $\Omega_\text{i}\subset\mathbb{R}^3$ is a bounded polyhedral Lipschitz domain. The general case is commented in Remark~\ref{Remark:GeneralLipschitzDomain}.  Let $(\triang_{N})_{N\in\N}$ be a sequence of quasi-uniform triangulations of the boundary $\Sigma$ of $\Omega_\text{i}$, see e.~g.~\cite[Chapter~4]{SauterSchwab} or \cite[Chapter~10]{Steinbach}, such that 
\begin{equation}\label{Eq:PropertiesOfTriangulations}
 \triang_N =\{\tau^N_1,\ldots,\tau^N_{n(N)}\}\quad\text{and}\quad \Sigma=\bigcup_{j=1}^{n(N)}\tau_j^N,
\end{equation}
where we assume that for the mesh-sizes $h(N)$ of the triangulations $\triang_N$ the relation $h(N)\rightarrow 0$ holds as $N\rightarrow\infty$. 
We choose the spaces of piecewise constant functions $S_0(\triang_N)$ with respect to the triangulations $\triang_N$ as spaces for the approximations of eigenfunctions of the eigenvalue problem~\eqref{Eq:EigContinuous}. For a finite-dimensional subspace $V\subset H^{s}(\Sigma)$, $s\in[0,1]$, we have the following approximation property of $S_0(\triang_N)$ with respect to $\|\cdot\|_{L^2(\Sigma)}$ \cite[Thm.~10.1]{Steinbach}:
\begin{equation}\label{Eq:ApproxPiecewiseConstant}
 \delta_{L^{2}(\Sigma)}(V, S_0(\triang_N))
=\sup_{\substack{v\in V\\
\|v\|_{L^2(\Sigma)}= 1}} \inf_{\varphi_N\in S_0(\triang_N)}\|v-\varphi_N\|_{L^2(\Sigma)}
 =\mathcal{O}(h(N)^{s}).
\end{equation}

The Galerkin approximation of the eigenvalue problem~\eqref{Eq:EigContinuous} reads as: find eigenpairs $(\lambda_N,\varphi_N)\in\C\times S_0(\triang_N)\setminus\{0\}$ such that
\begin{equation}\label{Eq:GalerkinEVPDelta}
\left((I + \alpha \mathcal{S}_0(\lambda_N)) \varphi_N,\psi_N\right) = 0\qquad\forall \psi_N\in S_0(\triang_N).
\end{equation}
All abstract convergence results from Theorem~\ref{Theorm:AbstrctConvergenceResults} can be applied to the approximation of the eigenvalue problem~\eqref{Eq:EigContinuous} by the Galerkin eigenvalue problem~\eqref{Eq:GalerkinEVPDelta}. 
In the following theorem we only state the asymptotic convergence order of the approximations of the eigenvalues and the corresponding eigenfunctions. 

\begin{thm}\label{Thm:ConvergenceDeltaInRhoAzero}
Let $D\subset\rho(A_0)$ be a compact  and    connected set in $\C$  such that $\partial D$ is a simple 
rectifiable curve. Suppose that $\lambda\in
\mathring{D}$ is the only eigenvalue of $I+\alpha{\mathcal S}_0(\cdot)$ in $D$ and that 
$\ker(I+\alpha{\mathcal S}_0(\lambda))\subset H^{s}(\Sigma)$ for some $s\in(0,1]$.
Then
there exist an
$N_0\in\N$ and a constant  $c>0$ such that for all $N\geq N_0$  we
have:
\begin{itemize}
\item[(i)] For all eigenvalues $\lambda_N$ of the Galerkin eigenvalue problem~\eqref{Eq:GalerkinEVPDelta} in $D$ 
\begin{equation}\label{Eq:ErrorApproxBoundary}
 \vert \lambda-\lambda_N\vert \leq
c (h(N))^{1+s}
\end{equation}
holds.
\item[(ii)] If  $(\lambda_N,u_N)$ is an eigenpair of 
\eqref{Eq:GalerkinEVPDelta} with $\lambda_N\in D$ and  
$\|\varphi_N\|_{L^2(\Sigma)}=1$, then 
\begin{equation*}
 \inf_{\varphi\in \ker(I+\alpha{\mathcal S}_0(\lambda))}\| \varphi-\varphi_N\|_{L^2(\Sigma)}
 \leq 
 c 
 \left(\vert \lambda_N-\lambda \vert + (h(N))^s \right).
\end{equation*}
\end{itemize}
\end{thm}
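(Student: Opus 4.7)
The plan is to reduce the claim to Theorem~\ref{Theorm:AbstrctConvergenceResults}(iii) applied to the operator-valued function $\mathcal{F}(\lambda) := I + \alpha\mathcal{S}_0(\lambda)$ on the Hilbert space $X = L^2(\Sigma)$ with approximation spaces $X_N := S_0(\triang_N)$. The holomorphy of $\mathcal{F}$ in $\mathcal{B}(L^2(\Sigma))$ follows from Lemma~\ref{lemma_single_layer_bdd_int_op}(iii) together with $\alpha \in L^\infty(\Sigma)$ acting as a bounded multiplier; G\r{a}rding's inequality is immediate since $I$ is coercive on $L^2(\Sigma)$ while $\alpha\mathcal{S}_0(\lambda)$ is compact on $L^2(\Sigma)$ by Lemma~\ref{lemma_single_layer_bdd_int_op}(i); and the density property~\eqref{Eq:ApproxSpace} for $X_N$ is standard for quasi-uniform triangulations.

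Next I would pin down the adjoint operator-valued function. The formal symmetry of $\mathcal{P}$ forces $G(\overline{\lambda}; y, x) = \overline{G(\lambda; x, y)}$, which translates into $\mathcal{S}_0(\lambda)^* = \mathcal{S}_0(\overline{\lambda})$ as operators on $L^2(\Sigma)$. Since $\alpha$ is real valued, the adjoint function is $\mathcal{F}^*(\lambda) = I + \mathcal{S}_0(\lambda)\alpha$, and a one-step bootstrap shows its kernel is automatically smoother: any $\psi \in \ker\mathcal{F}^*(\lambda)$ satisfies $\psi = -\mathcal{S}_0(\lambda)(\alpha\psi)$, and because $\alpha\psi \in L^2(\Sigma)$ while $\mathcal{S}_0(\lambda): L^2(\Sigma)\to H^1(\Sigma)$ is bounded by Lemma~\ref{lemma_single_layer_bdd_int_op}(i), we get $\ker\mathcal{F}^*(\lambda) \subset H^1(\Sigma)$. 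Combined with the hypothesis $\ker\mathcal{F}(\lambda) \subset H^s(\Sigma)$, the estimate~\eqref{Eq:ApproxPiecewiseConstant} then delivers the gap bounds
\begin{equation*}
\delta_{L^2(\Sigma)}(\ker\mathcal{F}(\lambda),X_N) = \mathcal{O}(h(N)^s), \qquad \delta_{L^2(\Sigma)}(\ker\mathcal{F}^*(\lambda),X_N) = \mathcal{O}(h(N)).
\end{equation*}

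The step I expect to be the main obstacle is showing that the maximal Jordan chain length at $\lambda$ equals $\ell = 1$, so that the generalized eigenspaces appearing in Theorem~\ref{Theorm:AbstrctConvergenceResults}(iii)(a) reduce to the kernels estimated above. My plan is to combine the self-adjointness of $A_\alpha$ from Proposition~\ref{proposition_self_adjoint_delta} with the Krein-type resolvent formula~\eqref{krein_delta}: this formula represents $(A_\alpha-\lambda)^{-1}$ as a part which is regular at $\lambda \in \rho(A_0)$ plus a product in which the only possible source of singularity is $[I + \alpha\mathcal{S}_0(\cdot)]^{-1}$, so the pole orders of $(A_\alpha-\lambda)^{-1}$ and of $[I + \alpha\mathcal{S}_0(\cdot)]^{-1}$ must agree at $\lambda$; since the resolvent of a self-adjoint operator has only simple poles, this forces $\ell=1$. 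The technical care is in arguing that the multiplications by $\gamma(A_0-\lambda)^{-1}$ and by $\text{SL}(\lambda)\alpha$ on the two sides of $[I + \alpha\mathcal{S}_0(\lambda)]^{-1}$ do not reduce the pole order, which can be extracted from the identification in Proposition~\ref{proposition_Birman_Schwinger_kernel_delta} together with the surjectivity of the trace on appropriate subspaces.

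With $\ell = 1$ and the two gap estimates in hand, Theorem~\ref{Theorm:AbstrctConvergenceResults}(iii)(a) directly gives
\begin{equation*}
|\lambda - \lambda_N| \leq c\,\delta_{L^2(\Sigma)}(\ker\mathcal{F}(\lambda),X_N)\,\delta_{L^2(\Sigma)}(\ker\mathcal{F}^*(\lambda),X_N) = \mathcal{O}(h(N)^{1+s}),
\end{equation*}
which yields (i), while (ii) follows from Theorem~\ref{Theorm:AbstrctConvergenceResults}(iii)(b) together with the primal gap bound $\delta_{L^2(\Sigma)}(\ker\mathcal{F}(\lambda),X_N) = \mathcal{O}(h(N)^s)$.
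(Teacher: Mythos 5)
Your proposal is correct and follows essentially the same route as the paper: apply Theorem~\ref{Theorm:AbstrctConvergenceResults}(iii) on $X=L^2(\Sigma)$ with $X_N=S_0(\triang_N)$, use the hypothesis for the primal kernel, obtain $\ker(I+\mathcal{S}_0(\lambda)\alpha)\subset H^1(\Sigma)$ from the bootstrap $\psi=-\mathcal{S}_0(\lambda)(\alpha\psi)$ and Lemma~\ref{lemma_single_layer_bdd_int_op}(i), and multiply the two gap estimates from~\eqref{Eq:ApproxPiecewiseConstant}. The only substantive difference is that you explicitly justify $\ell=1$ (via the simple pole of the self-adjoint resolvent, the Krein formula~\eqref{krein_delta} and Proposition~\ref{proposition_Birman_Schwinger_kernel_delta}), a step the paper's proof leaves implicit even though the stated rate $h(N)^{1+s}$ requires it; your argument is sound, modulo the standard Gohberg--Sigal identification of the maximal Jordan chain length with the pole order of the inverse.
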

\begin{proof}
The error estimates follow from the abstract convergence results in Theorem~\ref{Theorm:AbstrctConvergenceResults}, the approximation property \eqref{Eq:ApproxPiecewiseConstant} of $S_0(\triang_N)$, and the fact, that the eigenfunctions of the adjoint problem are more regular than those of $(I + \alpha \mathcal{S}_0(\cdot))$. To see the last claim, we note that a solution of the adjoint eigenproblem
\begin{equation*}
  (I + \alpha \mathcal{S}_0(\lambda))^*\varphi = (I + \mathcal{S}_0(\lambda)\alpha) \varphi = 0
\end{equation*}
belongs by Lemma~\ref{lemma_single_layer_bdd_int_op}~(i) to $H^1(\Sigma)$ and hence, by~\eqref{Eq:ApproxPiecewiseConstant}
\begin{equation*}
  \delta_{L^2(\Sigma)}\big(\ker((I + \alpha \mathcal{S}_0(\lambda))^*, S_0(\triang_N)\big) \leq c h(N)
\end{equation*}
holds. 
\end{proof}

\begin{remark}\label{Remark:GeneralLipschitzDomain}
If $\Omega$ is a bounded Lipschitz domain with a curved piecewise $C^2$-boundary the approximation of the boundary by a triangulation with flat triangles as described in~\cite[Chapter~8]{SauterSchwab} still guarantees convergence of the approximations of the eigenvalues and eigenfunctions with the same asymptotic convergence order as in Theorem~\ref{Thm:ConvergenceDeltaInRhoAzero}. 
This can be shown by using the results of the discretization of boundary integral operators for approximated boundaries \cite[Chapter~8]{SauterSchwab} and the abstract results of eigenvalue problem approximations \cite{Karma1,Karma2}.   
\end{remark}

The Galerkin eigenvalue problem~\eqref{Eq:GalerkinEVPDelta} results in a nonlinear matrix eigenvalue problem of size $n(N)\times n(N)$, which can be solved by the contour integral method~\cite{Beyn}. 
The contour integral method is 
a reliable method for the 
approximation of all eigenvalues of a holomorphic matrix-valued function $M(\cdot)$ which lie inside of a given contour in the 
complex plane, and for the approximation of  the corresponding  eigenvectors. 
The method is based on  the contour integration of the inverse function $M(\cdot)^{-1}$  and utilizes 
that the eigenvalues of the eigenvalue problem for $M(\cdot)$ are poles of  $M(\cdot)^{-1}$. 
By contour integration of the inverse $M(\cdot)^{-1}$ a reduction of the  holomorphic 
eigenvalue problem for  $M(\cdot)$ to an equivalent
linear eigenvalue problem is possible such that the eigenvalues of the linear 
eigenvalue problem
coincide with the eigenvalues of the nonlinear eigenvalue problem inside the
contour. For details of the implementation of the method we refer to~\cite{Beyn}.

\subsubsection{Approximation of discrete eigenvalues of $A_\alpha$ for the case $\sigma_\textnormal{disc}(A_0)\neq\varnothing$ }
\newcommand{\ms}{\mathcal{S}}
\newcommand{\ma}{\mathcal{A}}
If $\sigma_\text{disc}(A_0)\neq\varnothing$, then Proposition~\ref{proposition_Birman_Schwinger_delta} and Proposition~\ref{proposition_Birman_Schwinger_kernel_delta} show that the discrete eigenvalues of $A_\alpha$ are poles of $[I+\alpha \ms_0(\cdot)]^{-1}$ or the poles of $\ma(\cdot)$ satisfying the property specified in Proposition~\ref{proposition_Birman_Schwinger_delta}~(iii). The boundary element approximation of the discrete eigenvalues of $A_\alpha$ are based on these characterizations. 

First we want to consider the approximation of the poles of $(I+\alpha \ms_0(\cdot))^{-1}$. For  those poles of $(I+\alpha \ms_0(\cdot))^{-1}$ which lie in $\rho(A_0)$ the abstract convergence results of Section~2 can be applied with the same reasoning as in the case $\sigma_\text{disc}(A_0)=\varnothing$, since $(I+\alpha \ms_0(\cdot))$ is holomorphic in $\rho(A_0)$ and the poles of $(I+\alpha \ms_0(\cdot))^{-1}$ in $\rho(A_0)$ coincide with the eigenvalues of the eigenvalue problem for $(I+\alpha \ms_0(\cdot))$ in $\rho(A_0)$. If $\lambda_0$ is a pole of $(I+\alpha \ms_0(\cdot))^{-1}$ which lies in $\sigma_\text{disc}(A_0)$, then $(I+\alpha \ms_0(\cdot))$ is not holomorphic in $\lambda_0$ and therefore the convergence results of Section~2 are not  applicable for the boundary element approximation of $\lambda_0$. To the best of our knowledge a rigorous numerical analysis of the Galerkin approximation of such kind of poles of Fredholm operator-valued functions for which the inverse is not holomorphic at the poles   have not been considered so far in the literature. 
However, we expect similar convergence results also of such kind of poles. If this holds, then this kind of poles of $(I+\alpha \mathcal{S}_0(\lambda))^{-1}$, which is holomorphic in $\rho(A_\alpha)$ by Proposition~\ref{proposition_Birman_Schwinger_kernel_delta}, are appropriately represented as poles of the discretized problem and will be identified by the contour integral method. 

Finally, we want to discuss the approximation of the discrete eigenvalues of $A_\alpha$ which are not poles of $[I+\alpha \ms_0(\cdot)]^{-1}$. If $\lambda_0$ is such an eigenvalue, then, by Proposition~\ref{proposition_Birman_Schwinger_delta}~(iii), it is a pole of $\ma(\cdot)$ such that a pair $(\varphi,\psi)\in\ran R_{\mathcal{A}(\lambda_0)}$ defined by \eqref{def_residual} exists with $\alpha\varphi=0$ or equivalently that $(\lambda,\varphi,\psi)$, $(\varphi,\psi)\neq(0,0)$, satisfies 
\begin{equation}\label{Eq:EigContAinv}
 {\mathcal A}(\lambda)^{-1} 
\begin{pmatrix}
 \psi\\
\varphi 
\end{pmatrix}
  =\begin{pmatrix}
 0\\
0
\end{pmatrix}
\qquad\text{and}
\qquad\alpha\varphi=0.
\end{equation}
The characterization in \eqref{Eq:EigContAinv} can be used for the numerical approximation of the  discrete eigenvalues of $A_\alpha$ which are not poles of $[I+\alpha \ms_0(\cdot)]^{-1}$. 
For the boundary element approximation of the eigenvalue problem in~\eqref{Eq:EigContAinv} we need in addition to the space of piecewise constant functions $S_0(\triang_N)$  the space of piecewise linear functions $S_1(\triang_N)$. Formally, the Galerkin eigenvalue problem
\begin{equation}\label{Eq:EigContAinvDiscret}
\left( {\mathcal A}(\lambda)^{-1} 
\begin{pmatrix}
 \psi_N 
 \\
 \varphi_N
\end{pmatrix},
\begin{pmatrix}
 \widetilde \psi_N
 \\
 \widetilde\varphi_N
\end{pmatrix}
\right)
  =0
\qquad\text{for all }
\begin{pmatrix}
 \widetilde \psi_N
 \\
 \widetilde\varphi_N
\end{pmatrix}
\in S_1(\triang_N)\times S_0(\triang_N)
\end{equation}
is considered.
However, if the contour integral method is used for the computations of the eigenvalues of the Galerkin eigenvalue problem~\eqref{Eq:EigContAinvDiscret}, then  $\ma(\cdot)^{-1}$ has not to be computed, since the contour integral method operates on its inverse $\ma(\cdot)$. The abstract convergence results of Section~\ref{SectionGalerkinApprox} can be applied to the approximation of those eigenvalues $\lambda_0$ of the eigenvalue problem~\eqref{Eq:EigContAinv} for which $\ma(\cdot)^{-1}$ is holomorphic. In general it is possible  that $\lambda_0$ is a pole of  $\ma(\cdot)$ and of
$\ma(\cdot)^{-1}$. In this case, as mentioned before, a rigorous analysis of the Galerkin approximation has not been provided so far.

\subsection{Numerical examples}\label{SubSection:NumExDelta}
We present two numerical examples for  $\mathcal{P}=-\Delta$. In this case $A_0$ is the free Laplace operator and $\sigma(A_0) = \sigma_\text{ess}(A_0) = [0, \infty)$, and 
the fundamental solution for $\mathcal{P}-\lambda$ is given by $G(\lambda;x,y)=e^{i\sqrt{\lambda}\| x-y\|}(4\pi\|x-y\|)^{-1}$~\cite[Chapter~9]{M00}. In particular, the operator $A_0$ has no discrete eigenvalues and therefore the eigenvalues of $A_\alpha$ coincide  with the eigenvalues of the eigenvalue problem for $I+\alpha \ms_0(\cdot)$. The Galerkin eigenvalue problem~\eqref{Eq:GalerkinEVPDelta} is used for the computation of approximations of discrete eigenvalues of $A_\alpha$ and corresponding eigenfunctions. In all numerical experiments the open-source  library BEM++~\cite{BEMplusplus:2015} is employed for the computations of the boundary element 
matrices.

\subsubsection{Unit ball} As first numerical example we consider  as domain $\Omega_\text{i}$  the unit ball and a constant $\alpha$. The  eigenvalues of $A_{\alpha}$ for constant $\alpha$ have an analytical representation \cite[Theorem~3.2]{AGS87} which are used to show that in the numerical experiments the predicted convergence order~\eqref{Eq:ErrorApproxBoundary} is reflected. Let $l \in \mathbb{N}_0$ be such that $2 l + 1 < -\alpha$. Then $\lambda^{(l)}$
is an eigenvalue of $A_{\alpha}$ of multiplicity $2 l + 1$ if
  \begin{equation*}
    1 + \alpha I_{l+1/2}\big(\sqrt{-\lambda^{(l)}} \big) K_{l+1/2}\big(\sqrt{-\lambda^{(l)}} \big) =
0,
  \end{equation*}
  where $I_{l+1/2}$ and $K_{l+1/2}$ denote modified Bessel functions of order
$l+1/2$.
  Conversely, all  eigenvalues of $A_{\alpha}$ are of the above
form. 

For the numerical experiments we choose $\alpha=-6$.  
In Table~\ref{Table:Errors} the errors of the approximation of the eigenvalues of $A_\alpha$ with $\alpha=-6$
for three different mesh sizes $h$ are  given. For multiple eigenvalues $\lambda^{(l)}$, $l=1,2$,  we have used the mean value of the approximations, denoted by $\widehat{\lambda}^{(l)}_{h}$, for the computation of the error.
The experimental convergence order (eoc) reflects the predicted quadratic
convergence order~\eqref{Eq:ErrorApproxBoundary}. In
Figure~\ref{Fig:EigFunctionsPlane} plots of  computed eigenfunctions of
$A_{\alpha}$ in the $xy$-plane are given where for each exact eigenvalue
one approximated eigenfunction is selected.
\begin{table}[h]
\begin{center}
\begin{tabular}{l r r  r  r  r  r r r r r  r  r  r  r r r}
\toprule
$h$   &  & $ \frac{\left\vert\lambda^{(0)}_{h}-\lambda^{(0)}\right\vert}{\left\vert \lambda^{(0)}\right\vert} $ &eoc& &
$ \frac{\left\vert\widehat\lambda^{(1)}_{h}-\lambda^{(1)}\right\vert}{\left\vert \lambda^{(1)}\right\vert} $
 & eoc& & 
 $ \frac{\left\vert\widehat\lambda^{(2)}_{h}-\lambda^{(2)}\right\vert}{\left\vert \lambda^{(2)}\right\vert} $
 & eoc \\
\midrule 
0.2     & & 1.203e-2 & - &  & 2.837e-2 & -    & & 1.666e-1 & -\\
0.1  &  &2.473e-3 & 2.28& & 6.968e-3 & 2.02& &3.969e-2 & 2.07 \\
0.05 &  &4.344e-4& 2.48& & 1.781e-3 & 1.95 & &9.593e-3 & 2.06 \\
\bottomrule
\end{tabular} 
\end{center}
\caption{Error of the approximations of the eigenvalues of
$A_{\alpha}$, $\alpha=-6$, of the unit sphere for different mesh-sizes
$h$. }\label{Table:Errors}
\end{table}

 \begin{figure}[ht]
 \begin{center}
  \includegraphics[width=0.32\textwidth]{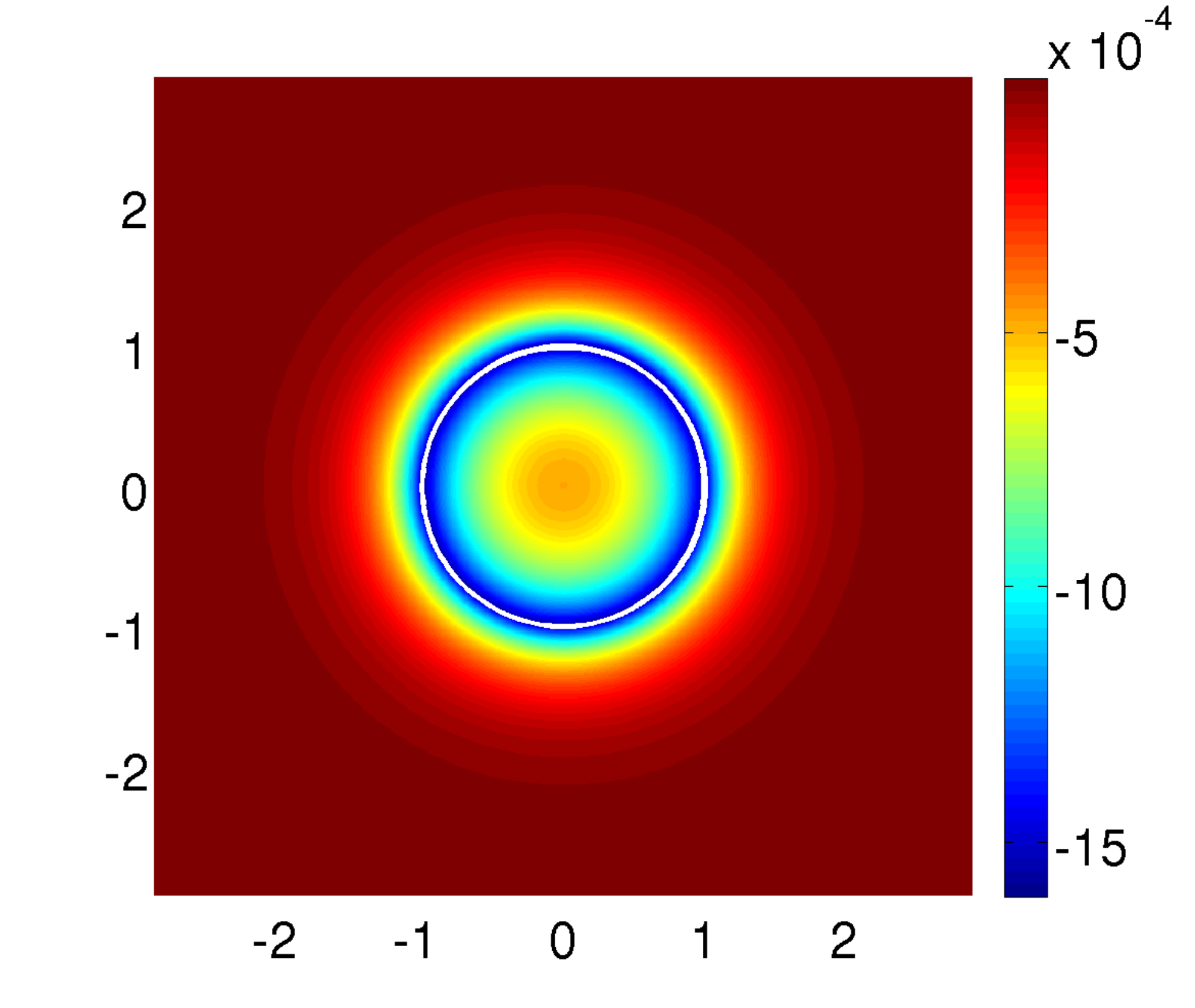} 
 \includegraphics[width=0.32\textwidth]{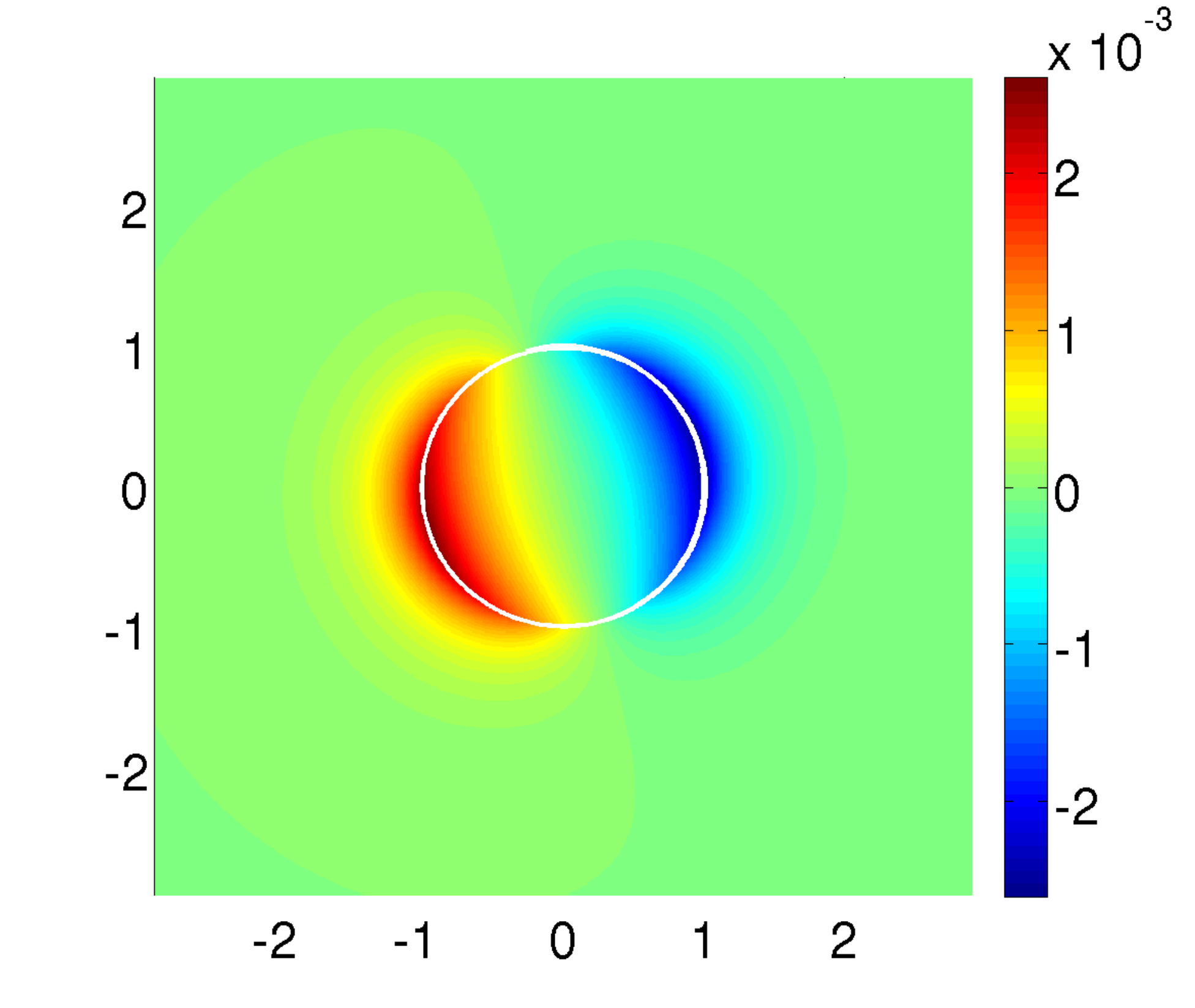}
  \includegraphics[width=0.32\textwidth]{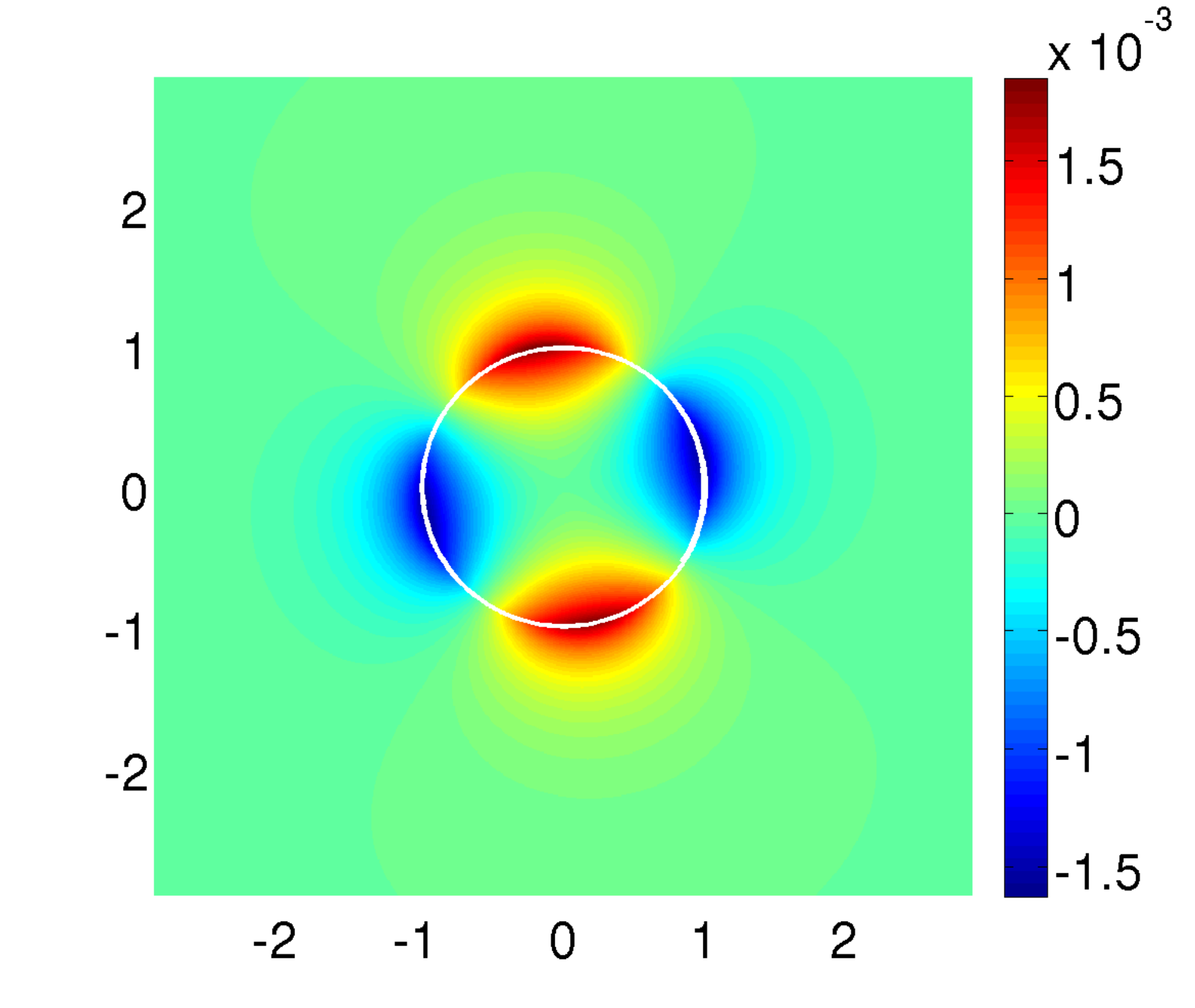}  
 \end{center}
 \caption{ Computed eigenfunctions of
 $A_{\alpha}$, $\alpha=-6$, in the
 $xy$-plane for the unit ball.}\label{Fig:EigFunctionsPlane}
 \end{figure}
 
\subsubsection{Screen} For the second numerical example we have chosen a $\delta$-potential supported on the non-closed surface $\Gamma:=[0,1]\times[0,1]\times\{0\}\subset\R^3$, which is referred to as screen. 
The interaction strength $\alpha$ is  defined by $\alpha=-15\chi_{\Gamma}$, where $\chi_{\Gamma}$ is the characteristic function on $\Gamma$ given as
\begin{equation*}
 \chi_{\Gamma}(x)
 :=
 \begin{cases}
 1,& \text{for }x\in  \Gamma,\\
 0, & \text{else }.
 \end{cases}
\end{equation*}
Such a problem fits in the described theory of this section. Take for example as domain $\Omega_\text{i}$ the unit cube, as we have done in our numerical experiments, then $\Gamma$ is identical with one of the faces of $\Sigma=\partial\Omega_\text{i}$.

In the numerical experiments we have chosen as  contour the ellipse 
$g(t)=c+a\cos(t)+i b \sin(t)$, $t\in [0,2\pi]$, with $c=-15.0$, $a=14.99$ and $b=0.01$. 
We have got four eigenvalues of the discretized eigenvalue problem inside the contour, namely $\lambda_h^{(1)}=  -43.02$, $\lambda_h^{(2)}=-23.93$, $\lambda_h^{(3)}=-23.88$, and $\lambda_h^{(3)}=-5.59$ for the mesh-size $h=0.0125$. Plots of the numerical approximations of the eigenfunctions in the $xy$-plane are given in Figure~\ref{Fig:EigFunctionsPlaneScreen}.
\begin{figure}[ht]
 \begin{center}
 \includegraphics[width=0.24\textwidth]{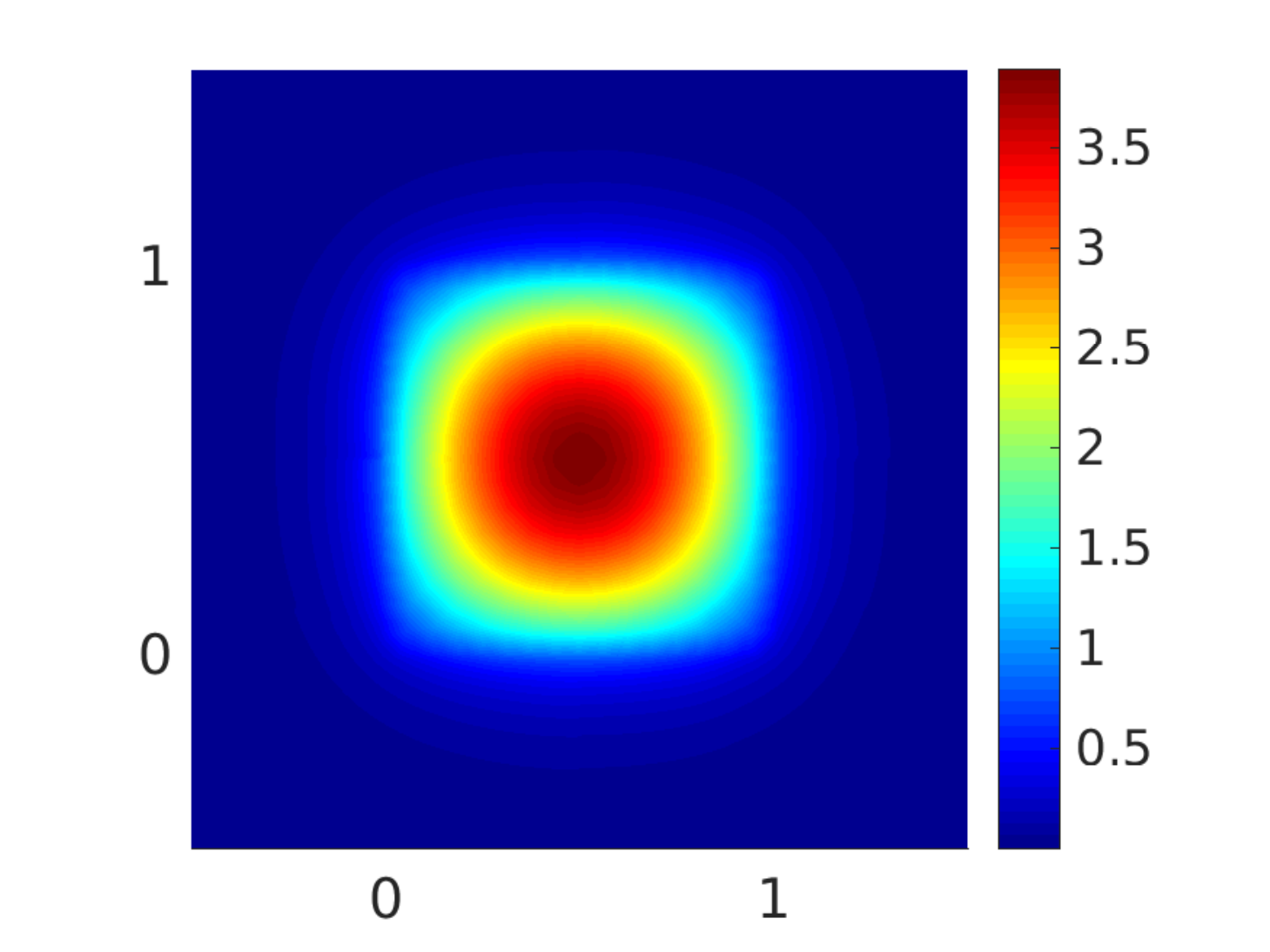}    
 \includegraphics[width=0.24\textwidth]{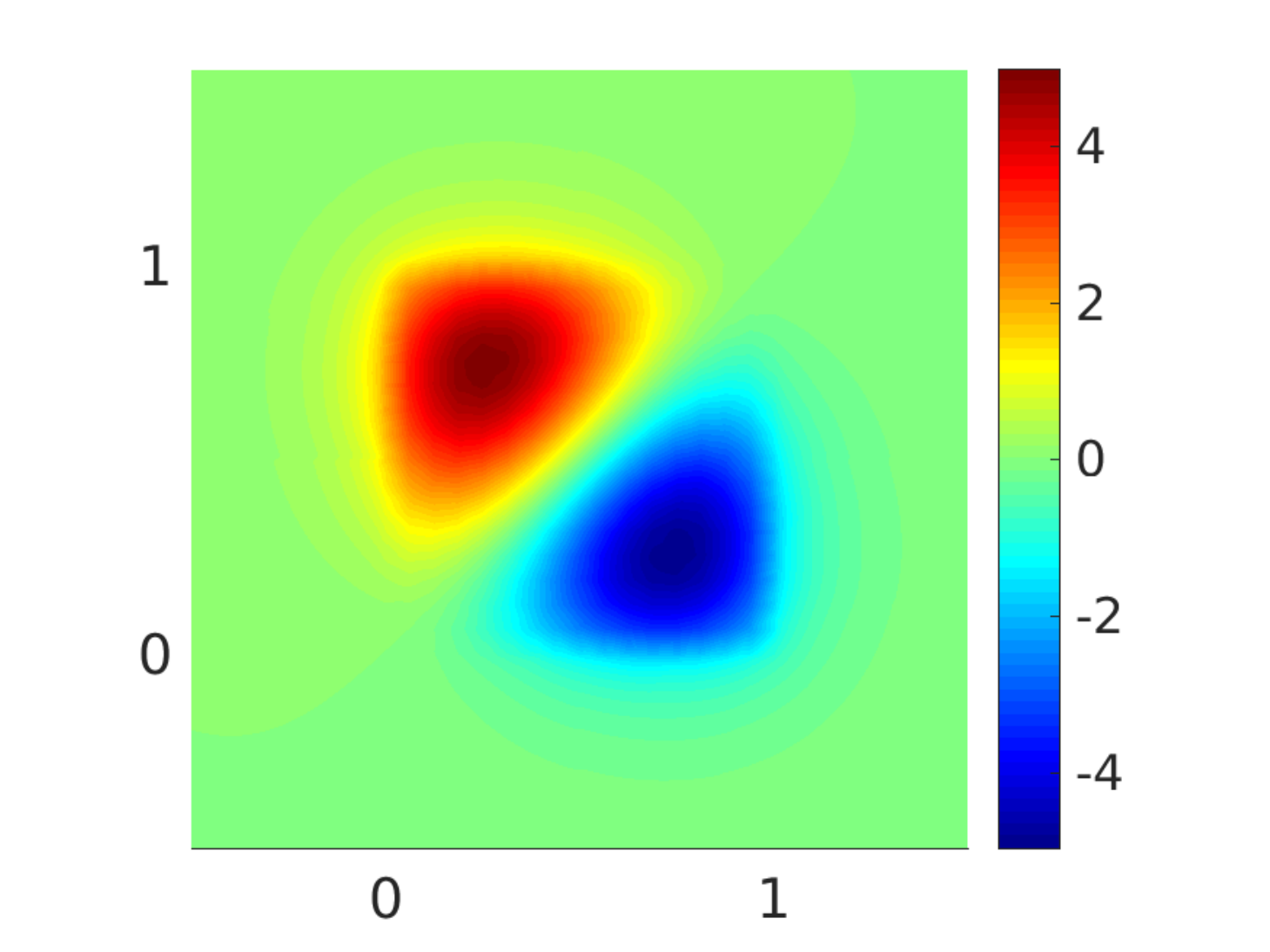}
  \includegraphics[width=0.24\textwidth]{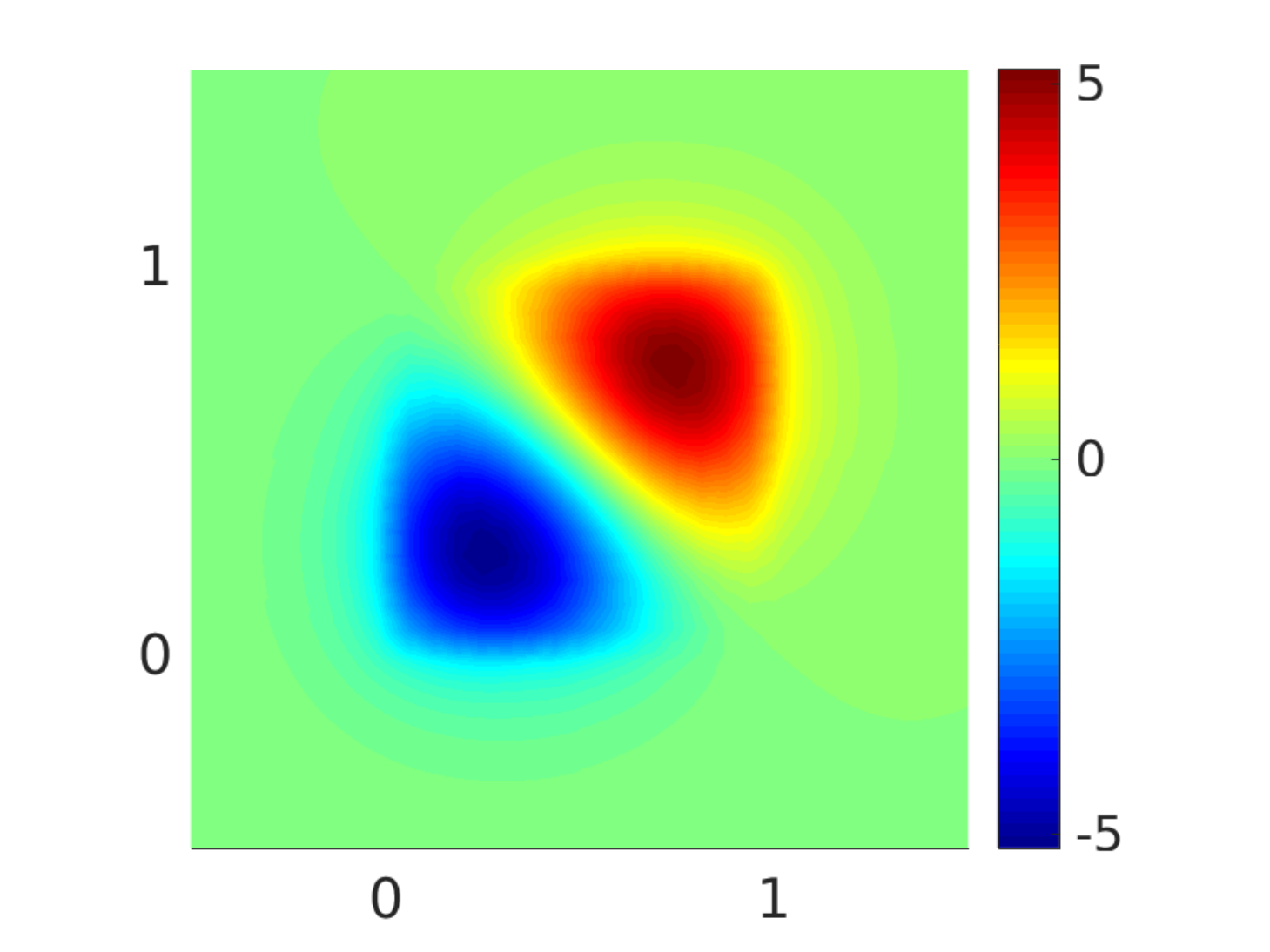}
 \includegraphics[width=0.24\textwidth]{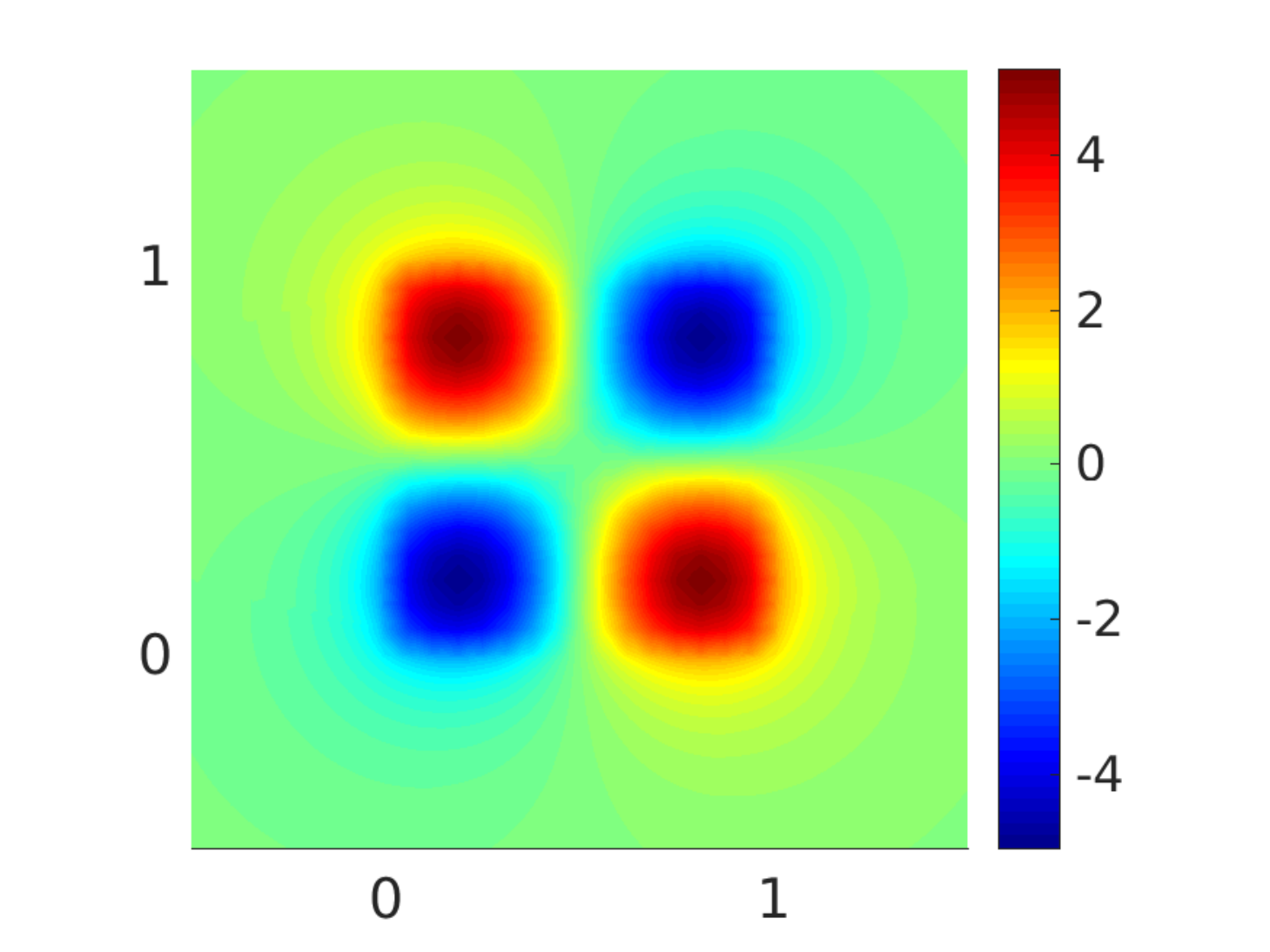}
 \end{center}
 \caption{Computed eigenfunctions of
 $A_{\alpha}$ in the
 $xy$-plane for $\alpha=-15\chi_{[0,1]\times[0,1]\times\{0\}}$.} \label{Fig:EigFunctionsPlaneScreen}
 \end{figure}

\section{Elliptic differential operators with $\delta'$-interactions supported on compact Lipschitz smooth surfaces}
\label{section_delta_prime}

In this section we study the spectral properties of the partial differential operator which corresponds to the formal expression $B_\beta := \mathcal{P} + \beta \langle \delta_\Sigma', \cdot \rangle \delta'$ in a mathematically rigorous way and study its spectral properties. The considerations are very similar as for $A_\alpha$ in Section~\ref{section_delta}. First, in Section~\ref{section_delta_prime_op_analysis} we show the self-adjointness of $B_\beta$ in $L^2(\mathbb{R}^n)$ and obtain the Birman-Schwinger principle to characterize the discrete eigenvalues of $B_\beta$ via boundary integral operators in Proposition~\ref{proposition_Birman_Schwinger_delta_prime}. Then, in Section~\ref{SubSectNumDeltaPrime} we discuss how these boundary integral equations can be solved numerically by boundary element methods. Finally, in Section~\ref{SubSecNumPrime} we show some numerical examples.

\subsection{Definition and self-adjointness of $B_\beta$} \label{section_delta_prime_op_analysis}

For a real valued function $\beta$ with $\beta^{-1} \in L^\infty(\Sigma)$ we define in $L^2(\mathbb{R}^n)$ the partial differential operator $B_\beta$ by
\begin{equation} \label{def_B_beta}
  \begin{split}
    B_\beta f &:= \mathcal{P} f_\text{i} \oplus \mathcal{P} f_\text{e}, \\
    \dom B_\beta &:= \big\{ f = f_\text{i} \oplus f_\text{e} \in H^1_\mathcal{P}(\Omega_\text{i}) \oplus H^1_\mathcal{P}(\Omega_\text{e}): \mathcal{B}_\nu f_\text{i} = \mathcal{B}_\nu f_\text{e}, \, \gamma f_\text{e} - \gamma f_\text{i} = \beta \mathcal{B}_\nu f \big\}.
  \end{split}
\end{equation}
With the help of~\eqref{Green_extended} it is not difficult to show that $B_\beta$ is symmetric in $L^2(\mathbb{R}^n)$:

\begin{lem} \label{lemma_symmetric_delta_prime}
  Let $\beta$ be a real valued function on $\Sigma$ with $\beta^{-1} \in L^\infty(\Sigma)$. Then the operator $B_\beta$ defined by~\eqref{def_B_beta} is symmetric in $L^2(\mathbb{R}^n)$.
\end{lem}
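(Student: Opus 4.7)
The plan is to mirror the proof of Lemma~\ref{lemma_symmetric_delta}, showing that $(B_\beta f, f)_{L^2(\mathbb{R}^n)} \in \mathbb{R}$ for every $f \in \dom B_\beta$. First, I would split the integral over $\mathbb{R}^n$ into integrals over $\Omega_\text{i}$ and $\Omega_\text{e}$ and apply the extended Green's identity~\eqref{Green_extended} on each side; recalling that $\nu$ points outward from $\Omega_\text{i}$ and hence inward into $\Omega_\text{e}$, this yields
\begin{equation*}
  (B_\beta f, f)_{L^2(\mathbb{R}^n)} = \Phi_{\Omega_\text{i}}[f_\text{i}, f_\text{i}] + \Phi_{\Omega_\text{e}}[f_\text{e}, f_\text{e}] - (\mathcal{B}_\nu f_\text{i}, \gamma f_\text{i}) + (\mathcal{B}_\nu f_\text{e}, \gamma f_\text{e}).
\end{equation*}

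The form contributions are individually real: by the assumptions $a_{j k} = \overline{a_{k j}}$ and $a$ real, the sesquilinear form $\Phi_\Omega$ from~\eqref{def_Phi_Omega} is symmetric on each Lipschitz set. In contrast to the $\delta$-case one cannot combine the two form terms into $\Phi_{\mathbb{R}^n}[f,f]$, because $f$ need not belong to $H^1(\mathbb{R}^n)$ under the $\delta'$-transmission conditions — but this is unnecessary, as each term is real on its own. It remains to analyze the boundary contribution.

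Using the first transmission condition $\mathcal{B}_\nu f_\text{i} = \mathcal{B}_\nu f_\text{e}$, which allows me to denote the common value by $\mathcal{B}_\nu f$, I combine the two boundary pairings via sesquilinearity:
\begin{equation*}
  (\mathcal{B}_\nu f_\text{e}, \gamma f_\text{e}) - (\mathcal{B}_\nu f_\text{i}, \gamma f_\text{i}) = (\mathcal{B}_\nu f, \gamma f_\text{e} - \gamma f_\text{i}) = (\mathcal{B}_\nu f, \beta \mathcal{B}_\nu f).
\end{equation*}
The remaining subtlety, and the only genuine obstacle, is making rigorous sense of this last pairing: a priori $\mathcal{B}_\nu f$ lies only in $H^{-1/2}(\Sigma)$, so the symbol $\beta \mathcal{B}_\nu f$ is not automatically defined. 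I would resolve this by observing that the jump condition $\gamma f_\text{e} - \gamma f_\text{i} = \beta \mathcal{B}_\nu f$ forces $\beta \mathcal{B}_\nu f \in H^{1/2}(\Sigma) \subset L^2(\Sigma)$; since $\beta^{-1}\in L^\infty(\Sigma)$, multiplication by $\beta^{-1}$ then yields $\mathcal{B}_\nu f = \beta^{-1}(\gamma f_\text{e} - \gamma f_\text{i}) \in L^2(\Sigma)$. Hence the duality pairing reduces to the $L^2(\Sigma)$-inner product and
\begin{equation*}
  (\mathcal{B}_\nu f, \beta \mathcal{B}_\nu f) = \int_\Sigma \beta\, |\mathcal{B}_\nu f|^2 \,\textup{d}\sigma \in \mathbb{R},
\end{equation*}
since $\beta$ is real-valued. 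Combining the three real contributions, $(B_\beta f, f)_{L^2(\mathbb{R}^n)} \in \mathbb{R}$, which proves the claim.
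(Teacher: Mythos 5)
Your proof is correct and follows essentially the same route as the paper's: apply Green's identity \eqref{Green_extended} on $\Omega_\text{i}$ and $\Omega_\text{e}$ separately, use the two transmission conditions to rewrite the boundary contribution as $(\mathcal{B}_\nu f, \beta \mathcal{B}_\nu f)$, and observe that all terms are real. Your additional observation that the jump condition forces $\mathcal{B}_\nu f = \beta^{-1}(\gamma f_\text{e}-\gamma f_\text{i}) \in L^2(\Sigma)$, so that the pairing is genuinely $\int_\Sigma \beta |\mathcal{B}_\nu f|^2\, \textup{d}\sigma$, is a welcome extra bit of rigor that the paper leaves implicit.
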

\begin{proof}
  We show that $(B_\beta f, f)_{L^2(\mathbb{R}^n)} \in \mathbb{R}$ for all $f \in \dom B_\beta$. Let $f \in \dom B_\beta$ be fixed. 
  Using~\eqref{Green_extended} in $\Omega_\text{i}$ and $\Omega_\text{e}$ and that the normal $\nu$ is pointing outside of $\Omega_\text{i}$ and inside of $\Omega_\text{e}$ we get
  \begin{equation*}
    \begin{split}
      (B_\beta f, f)_{L^2(\mathbb{R}^n)} &= (\mathcal{P} f_\text{i}, f_\text{i})_{L^2(\Omega_\text{i})} + (\mathcal{P} f_\text{e}, f_\text{e})_{L^2(\Omega_\text{e})} \\
      &= \Phi_{\Omega_\text{i}}[f_\text{i}, f_\text{i}] - (\mathcal{B}_\nu f_\text{i}, \gamma f_\text{i}) + \Phi_{\Omega_\text{e}}[f_\text{e}, f_\text{e}] + (\mathcal{B}_\nu f_\text{e}, \gamma f_\text{e}).
    \end{split}
  \end{equation*}
  Since $f \in \dom B_\beta$ we have $\mathcal{B}_\nu f_\text{i} = \mathcal{B}_\nu f_\text{e}$ and $\beta \mathcal{B}_\nu f = (\gamma f_\text{e} - \gamma f_\text{i})$. Therefore, we conclude
  \begin{equation*}
    \begin{split}
      (B_\beta f, f)_{L^2(\mathbb{R}^n)} &= \Phi_{\Omega_\text{i}}[f_\text{i}, f_\text{i}] + \Phi_{\Omega_\text{e}}[f_\text{e}, f_\text{e}] + (\mathcal{B}_\nu f, \gamma f_\text{e} - \gamma f_{\text{i}}) \\ 
      &= \Phi_{\Omega_\text{i}}[f_\text{i}, f_\text{i}] + \Phi_{\Omega_\text{e}}[f_\text{e}, f_\text{e}] + (\mathcal{B}_\nu f, \beta \mathcal{B}_\nu f).
    \end{split}
  \end{equation*}
  Since the sesquilinear forms $\Phi_{\Omega_{\text{i}/\text{e}}}$ are symmetric, the latter number is real and therefore, the claim is shown.
\end{proof}

The following proposition is the  counterpart of Proposition~\ref{proposition_Birman_Schwinger_delta} to characterize the discrete eigenvalues of $B_\beta$ via boundary integral operators. It is the theoretic basis to compute these eigenvalues with the help of boundary element methods in Section~\ref{SubSectNumDeltaPrime}.
To formulate the result below recall for $\lambda \in \rho(A_0) \cup \sigma_\text{disc}(A_0)$ the definition of the double layer potential $\text{DL}(\lambda)$ from~\eqref{def_double_layer_potential},  the set $\mathcal{N}_\lambda$ from~\eqref{def_N_lambda},  the hypersingular boundary integral operator $\mathcal{R}(\lambda)$ from~\eqref{def_hypersing_bdd_int_op}, and $R_{\mathcal{A}(\lambda_0)}$ from~\eqref{def_residual}. 

\begin{prop} \label{proposition_Birman_Schwinger_delta_prime}
  Let $\beta$ be a real valued function on $\Sigma$ with $\beta^{-1} \in L^\infty(\Sigma)$ and let $B_\beta$ be defined by~\eqref{def_B_beta}. Then the following is true for any $\lambda \in \rho(A_0) \cup \sigma_\textup{disc}(A_0)$:
  \begin{itemize}
    \item[(i)] $\ker(B_\beta - \lambda) \ominus \ker(A_0-\lambda) \neq \{ 0 \}$ if and only if there exists $0 \neq \varphi \in \mathcal{N}_\lambda$ such that $(\beta^{-1} + \mathcal{R}(\lambda)) \varphi = 0$. Moreover,
    \begin{equation} \label{kernel_Birman_Schwinger_delta_prime}
      \ker (B_\beta - \lambda) \ominus \ker(A_0-\lambda) = \big\{ \textup{DL}(\lambda) \varphi: \varphi \in \mathcal{N}_\lambda, (\beta^{-1} + \mathcal{R}(\lambda)) \varphi  =0 \big\}.
    \end{equation}
    \item[(ii)] If $\lambda \in \rho(A_0)$, then $\lambda \in \sigma_\textup{p}(B_\beta)$ if and only if $0 \in \sigma_\textup{p}(\beta^{-1} + \mathcal{R}(\lambda))$.
    \item[(iii)] $\ker(B_\beta - \lambda) \cap \ker(A_0-\lambda) \neq \{ 0 \}$ if and only if there exists $(\varphi, \psi)^\top \in \ran R_{\mathcal{A}(\lambda_0)}$ such that $\psi = 0$.
    \item[(iv)] If $\lambda \notin \sigma_\textup{p}(B_\beta) \cup \sigma(A_0)$, then $\beta^{-1} + \mathcal{R}(\lambda): H^{1/2}(\Sigma) \rightarrow H^{-1/2}(\Sigma)$ admits a bounded and everywhere defined inverse.
  \end{itemize}
\end{prop}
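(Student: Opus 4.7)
The plan is to mirror Proposition~\ref{proposition_Birman_Schwinger_delta} step by step, swapping the single layer potential $\textup{SL}(\lambda)$ and the boundary integral operator $\mathcal{S}_0(\lambda)$ for the double layer potential $\textup{DL}(\lambda)$ and the hypersingular operator $\mathcal{R}(\lambda)$. The key algebraic observation is that the jump relations for $\textup{DL}(\lambda)$ from Lemma~\ref{lemma_double_layer_potential}~(ii) (a jump in $\gamma$ equal to the density and no jump in $\mathcal{B}_\nu$) are exactly the ones matching the $\delta'$-transmission condition $\gamma f_\textup{e} - \gamma f_\textup{i} = \beta \mathcal{B}_\nu f$ built into $\dom B_\beta$.

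For the forward direction of~(i), I would start from $f \in \ker(B_\beta - \lambda) \ominus \ker(A_0-\lambda)$. Because $f \in \dom B_\beta$, the function belongs to $H^1_\mathcal{P}(\mathbb{R}^n \setminus \Sigma)$, fulfills $\mathcal{B}_\nu f_\textup{i} = \mathcal{B}_\nu f_\textup{e}$, and satisfies $(\mathcal{P} - \lambda) f = 0$ on $\mathbb{R}^n \setminus \Sigma$. Lemma~\ref{lemma_double_layer_potential}~(i) then decomposes $f = \textup{DL}(\lambda) \varphi + g$ for some $\varphi \in \mathcal{N}_\lambda$ and $g \in \ker(A_0-\lambda)$; since $\textup{DL}(\lambda) \varphi$ lies in the orthogonal complement of $\ker(A_0-\lambda)$ (as noted right after \eqref{def_double_layer_potential}), the orthogonality of $f$ to $\ker(A_0-\lambda)$ forces $g = 0$, so $f = \textup{DL}(\lambda) \varphi$. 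Lemma~\ref{lemma_double_layer_potential}~(ii) and the definition of $\mathcal{R}(\lambda)$ give $\gamma f_\textup{e} - \gamma f_\textup{i} = \varphi$ and $\mathcal{B}_\nu f = -\mathcal{R}(\lambda) \varphi$, so the transmission condition, rewritten in the equivalent form $\mathcal{B}_\nu f = \beta^{-1}(\gamma f_\textup{e} - \gamma f_\textup{i})$ (which avoids multiplying by the possibly unbounded $\beta$), becomes $(\beta^{-1} + \mathcal{R}(\lambda)) \varphi = 0$. For the reverse direction, starting from a nonzero $\varphi \in \mathcal{N}_\lambda$ solving the boundary integral equation, I set $f := \textup{DL}(\lambda) \varphi$ and check by the same jump identities and Lemma~\ref{lemma_double_layer_potential}~(i) that $f \in \dom B_\beta$ and $(B_\beta - \lambda) f = 0$; non-triviality follows from $\gamma f_\textup{e} - \gamma f_\textup{i} = \varphi \neq 0$.

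Item~(ii) then follows immediately since $\ker(A_0-\lambda) = \{0\}$ and $\mathcal{N}_\lambda = H^{1/2}(\Sigma)$ whenever $\lambda \in \rho(A_0)$. For item~(iii), an $f \in \ker(A_0-\lambda)$ belongs to $\dom B_\beta$ if and only if, in addition to the automatic matching $\gamma f_\textup{i} = \gamma f_\textup{e}$ and $\mathcal{B}_\nu f_\textup{i} = \mathcal{B}_\nu f_\textup{e}$ from $f \in H^2(\mathbb{R}^n)$, one has $\beta \mathcal{B}_\nu f = 0$, equivalently $\mathcal{B}_\nu f = 0$ since $\beta^{-1} \in L^\infty(\Sigma)$ forces $|\beta| \geq c > 0$ almost everywhere; Theorem~\ref{theorem_spectrum_A_0} then translates this into the requirement that the associated pair $(\gamma f, \mathcal{B}_\nu f)^\top \in \ran R_{\mathcal{A}(\lambda)}$ has second component zero.

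For item~(iv), $\mathcal{R}(\lambda): H^{1/2}(\Sigma) \to H^{-1/2}(\Sigma)$ is Fredholm of index zero by Lemma~\ref{lemma_hypersing_bdd_int_op}~(i). Multiplication by $\beta^{-1}$ is compact as a map $H^{1/2}(\Sigma) \to H^{-1/2}(\Sigma)$, because $\beta^{-1} \in L^\infty(\Sigma)$ acts boundedly on $L^2(\Sigma)$, while $H^{1/2}(\Sigma) \hookrightarrow L^2(\Sigma)$ is compact and $L^2(\Sigma) \hookrightarrow H^{-1/2}(\Sigma)$ is continuous. Hence $\beta^{-1} + \mathcal{R}(\lambda)$ is also Fredholm of index zero; by~(ii) its kernel is trivial under the hypothesis $\lambda \notin \sigma_\textup{p}(B_\beta) \cup \sigma(A_0)$, so it is bijective, and the inverse is bounded by the open mapping theorem. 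The main obstacle I anticipate is the careful bookkeeping in the forward direction of~(i), specifically verifying that the $\ker(A_0-\lambda)$-contribution in the decomposition from Lemma~\ref{lemma_double_layer_potential}~(i) is eliminated by the orthogonality and reconciling the transmission condition with the $\beta^{-1}$-formulation of the boundary integral equation; once this is done, the remaining items are straightforward adaptations of the $A_\alpha$-arguments.
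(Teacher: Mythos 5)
Your proposal is correct and follows essentially the same route as the paper: decompose via Lemma~\ref{lemma_double_layer_potential}, use the jump relations and the definition of $\mathcal{R}(\lambda)$ to translate the $\delta'$-transmission condition into $(\beta^{-1}+\mathcal{R}(\lambda))\varphi=0$, reduce (ii) and (iii) to (i) and Theorem~\ref{theorem_spectrum_A_0}, and obtain (iv) from the compactness of $\beta^{-1}:H^{1/2}(\Sigma)\to H^{-1/2}(\Sigma)$ together with the Fredholm property of $\mathcal{R}(\lambda)$ and the open mapping theorem. The only differences are cosmetic (you make explicit the orthogonality argument eliminating the $\ker(A_0-\lambda)$ component, which the paper leaves implicit in its citation of Lemma~\ref{lemma_double_layer_potential}~(i)).
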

\begin{proof}
  (i) Assume first that $\ker(B_\beta - \lambda) \ominus \ker(A_0-\lambda) \neq \{ 0 \}$ and let $f \in \ker(B_\beta - \lambda) \ominus \ker(A_0-\lambda) $. Then by Lemma~\ref{lemma_double_layer_potential}~(i) there exists $\varphi \in \mathcal{N}_\lambda$ such that $f = \text{DL}(\lambda) \varphi$. Since $f \in \dom B_\beta$ one has with Lemma~\ref{lemma_double_layer_potential}~(ii)
  \begin{equation*}
    \beta \mathcal{B}_\nu f = \gamma f_\text{e} - \gamma f_\text{i} = \gamma(\text{DL}(\lambda) \varphi)_\text{e} - \gamma(\text{DL}(\lambda) \varphi)_\text{i} = \varphi.
  \end{equation*}
  With $\mathcal{B}_\nu f = -\mathcal{R}(\lambda) \varphi$ this can be rewritten as $\beta^{-1} \varphi = -\mathcal{R}(\lambda) \varphi$. Hence, the above considerations show
  \begin{equation} \label{kernel_Birman_Schwinger_delta_prime1}
    \ker (B_\beta - \lambda) \ominus \ker(A_0-\lambda) \subset \big\{ \textup{DL}(\lambda) \varphi: \varphi \in \mathcal{N}_\lambda, (\beta^{-1} + \mathcal{R}(\lambda)) \varphi  =0 \big\}.
  \end{equation}
  
  Conversely, assume that there exists $\varphi \in \mathcal{N}_\lambda$ such that $(\beta^{-1} + \mathcal{R}(\lambda))\varphi=0$. Then $f := \text{DL}(\lambda) \varphi \in H^1_\mathcal{P}(\mathbb{R}^n \setminus \Sigma)$ is nontrivial by Lemma~\ref{lemma_double_layer_potential}~(ii). Using the jump properties of $\text{DL}(\lambda) \varphi$ from Lemma~\ref{lemma_double_layer_potential}~(ii) we conclude further $\mathcal{B}_\nu f_\text{i} = \mathcal{B}_\nu f_\text{e}$ and
  \begin{equation*}
    \gamma f_\text{e} - \gamma f_\text{i} = \gamma(\text{DL}(\lambda) \varphi)_\text{e} - \gamma(\text{DL}(\lambda) \varphi)_\text{i} = \varphi = -\beta \mathcal{R}(\lambda) \varphi = \beta \mathcal{B}_\nu f,
  \end{equation*}
  where $\beta^{-1}(I  + \beta \mathcal{R}(\lambda)) \varphi=0$ was used. Hence, $f \in \dom B_\beta$. With Lemma~\ref{lemma_double_layer_potential}~(i) we conclude with $\varphi \in \mathcal{N}_\lambda$ eventually
  \begin{equation*}
    (B_\beta - \lambda) f = (\mathcal{P} - \lambda) (\text{DL}(\lambda) \varphi)_\text{i} \oplus (\mathcal{P} - \lambda) (\text{DL}(\lambda) \varphi)_\text{e}  =0,
  \end{equation*}
  which shows $\lambda \in \sigma_\textup{p}(B_\beta)$ and
  \begin{equation} \label{kernel_Birman_Schwinger_delta_prime2}
     \big\{ \textup{DL}(\lambda) \varphi: \varphi \in \mathcal{N}_\lambda, (\beta^{-1} + \mathcal{R}(\lambda)) \varphi  =0 \big\} \subset \ker (B_\beta - \lambda) \ominus \ker(A_0-\lambda).
  \end{equation}
  Note that~\eqref{kernel_Birman_Schwinger_delta_prime1} and~\eqref{kernel_Birman_Schwinger_delta_prime2} give~\eqref{kernel_Birman_Schwinger_delta_prime}. Hence, all claims in item~(i) are proved.
  
  Assertion~(ii) is a simple consequence of item~(i), as for $\lambda \notin \sigma(A_0)$ we have $\ker(A_0-\lambda)=\{ 0 \}$ and $\mathcal{N}_\lambda = H^{1/2}(\Sigma)$.
  
  Statement~(iii) follows from Theorem~\ref{theorem_spectrum_A_0}. Note that $f \in \dom B_\beta \cap \dom A_0$ if and only if $f \in H^2(\mathbb{R}^n)$ and $\mathcal{B}_\nu f = \beta^{-1} (\gamma f_\text{e} - \gamma f_\text{i}) = 0$. With Theorem~\ref{theorem_spectrum_A_0} it follows that $f \in \ker(B_\beta - \lambda) \cap \ker(A_0 - \lambda)$ if and only if there exists $(\varphi, \psi)^\top = (\gamma f, \mathcal{B}_\nu f)^\top \in \ran R_{\mathcal{A}(\lambda_0)}$ such that $\psi = 0$.
  
  (iv) First, we note that the multiplication with the function $\beta^{-1} \in L^\infty(\Sigma)$ gives rise to a bounded operator from $H^{1/2}(\Sigma)$ to $L^2(\Sigma)$ and as $L^2(\Sigma)$ is compactly embedded in $H^{-1/2}(\Sigma)$, the operator $\beta^{-1}: H^{1/2}(\Sigma) \rightarrow H^{-1/2}(\Sigma)$ is compact. Therefore, we deduce from \cite[Theorem~2.26]{M00} that $\beta^{-1} + \mathcal{R}(\lambda): H^{1/2}(\Sigma) \rightarrow H^{-1/2}(\Sigma)$ is a Fredholm operator with index zero, as $\mathcal{R}(\lambda)$ is a Fredholm operator with index zero by Lemma~\ref{lemma_hypersing_bdd_int_op}~(ii). Since $\lambda$ is not an eigenvalue of $B_\beta$ by assumption, we deduce from~(ii) that $\beta^{-1} + \mathcal{R}(\lambda)$ is injective and hence, this operator is also surjective. Therefore, it follows from the open mapping theorem that $\beta^{-1} + \mathcal{R}(\lambda)$  has a bounded inverse from $H^{-1/2}(\Sigma)$ to $H^{1/2}(\Sigma)$.
\end{proof}

In the following proposition we show the self-adjointness of $B_\beta$ and a Krein type resolvent formula for this operator. We remark that the resolvent formula in~\eqref{krein_delta_prime} is well defined, as $\beta^{-1} + \mathcal{R}(\lambda): H^{1/2}(\Sigma) \rightarrow H^{-1/2}(\Sigma)$ is boundedly invertible for $\lambda \in \rho(A_0) \cap \rho(B_\beta)$ by Proposition~\ref{proposition_Birman_Schwinger_delta_prime}~(iv).

\begin{prop} \label{proposition_self-adjoint_delta_prime}
  Let $\beta$ be a real valued function on $\Sigma$ with $\beta^{-1} \in L^\infty(\Sigma)$ and let the operators $A_0$, $\textup{DL}(\lambda)$, and $\mathcal{R}(\lambda)$, $\lambda \in \rho(A_0)$, be given by~\eqref{def_free_Op}, \eqref{def_double_layer_potential}, and~\eqref{def_hypersing_bdd_int_op}, respectively. Then the operator $B_\beta$ defined by~\eqref{def_B_beta} is self-adjoint in $L^2(\mathbb{R}^n)$ and the following is true:
  \begin{itemize}
    \item[(i)] For $\lambda \in \rho(A_0) \cap \rho(B_\beta)$ the resolvent of $B_\beta$ is given by
    \begin{equation} \label{krein_delta_prime}
      (B_\beta - \lambda)^{-1} = (A_0 - \lambda)^{-1} + \textup{DL}(\lambda) \big( \beta^{-1} + \mathcal{R}(\lambda) \big)^{-1} \mathcal{B}_\nu (A_0 - \lambda)^{-1}.
    \end{equation}
    \item[(ii)] $\sigma_\textup{ess}(B_\beta) = \sigma_\textup{ess}(A_0)$.
  \end{itemize}
\end{prop}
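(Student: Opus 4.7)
The plan is to mirror the argument of Proposition~\ref{proposition_self_adjoint_delta}, replacing single layer objects by their double layer analogues and exploiting the Birman--Schwinger type invertibility from Proposition~\ref{proposition_Birman_Schwinger_delta_prime}~(iv). Since $B_\beta$ is already symmetric by Lemma~\ref{lemma_symmetric_delta_prime}, self-adjointness together with the resolvent identity~\eqref{krein_delta_prime} will follow as soon as we show that for every $\lambda\in\mathbb{C}\setminus(\sigma(A_0)\cup\sigma_\textup{p}(B_\beta))$ the operator defined by the right-hand side of~\eqref{krein_delta_prime} maps $L^2(\mathbb{R}^n)$ into $\dom B_\beta$ and acts as a right inverse of $B_\beta-\lambda$.

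Fix such a $\lambda$, take $f\in L^2(\mathbb{R}^n)$, and set
\begin{equation*}
  g := (A_0-\lambda)^{-1}f + \textup{DL}(\lambda)\bigl(\beta^{-1}+\mathcal{R}(\lambda)\bigr)^{-1}\mathcal{B}_\nu (A_0-\lambda)^{-1}f,\qquad \psi := \bigl(\beta^{-1}+\mathcal{R}(\lambda)\bigr)^{-1}\mathcal{B}_\nu (A_0-\lambda)^{-1}f.
\end{equation*}
By Proposition~\ref{proposition_resolvent} the first summand lies in $H^2(\mathbb{R}^n)$, so $\mathcal{B}_\nu(A_0-\lambda)^{-1}f\in H^{1/2}(\Sigma)\subset H^{-1/2}(\Sigma)$ and Proposition~\ref{proposition_Birman_Schwinger_delta_prime}~(iv) makes $\psi\in H^{1/2}(\Sigma)$ well-defined; Lemma~\ref{lemma_double_layer_potential}~(i) then puts $\textup{DL}(\lambda)\psi\in H^1_\mathcal{P}(\mathbb{R}^n\setminus\Sigma)$. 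The jump formulas of Lemma~\ref{lemma_double_layer_potential}~(ii) give immediately $\mathcal{B}_\nu g_\textup{i}=\mathcal{B}_\nu g_\textup{e}$ and $\gamma g_\textup{e}-\gamma g_\textup{i}=\psi$, while $\mathcal{B}_\nu\textup{DL}(\lambda)\psi=-\mathcal{R}(\lambda)\psi$ combined with the defining relation $(\beta^{-1}+\mathcal{R}(\lambda))\psi=\mathcal{B}_\nu(A_0-\lambda)^{-1}f$ yields $\mathcal{B}_\nu g=\beta^{-1}\psi$; hence $\beta\mathcal{B}_\nu g=\psi=\gamma g_\textup{e}-\gamma g_\textup{i}$ and $g\in\dom B_\beta$. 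Since $\lambda\in\rho(A_0)$ implies $\widehat{P}_\lambda=0$, formula~\eqref{range_DL} gives $(\mathcal{P}-\lambda)\textup{DL}(\lambda)\psi=0$ on $\mathbb{R}^n\setminus\Sigma$, whence $(B_\beta-\lambda)g=f$. This proves $\ran(B_\beta-\lambda)=L^2(\mathbb{R}^n)$ together with~\eqref{krein_delta_prime}.

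For assertion~(ii) I would apply the Weyl theorem to the difference $(B_\beta-\lambda)^{-1}-(A_0-\lambda)^{-1}$ for $\lambda\in\mathbb{C}\setminus\mathbb{R}$. The elliptic regularity of Proposition~\ref{proposition_resolvent} and the definition~\eqref{def_B_nu} of $\mathcal{B}_\nu$ give boundedness of $\mathcal{B}_\nu(A_0-\lambda)^{-1}\colon L^2(\mathbb{R}^n)\to H^{1/2}(\Sigma)$; composing with the compact embedding $H^{1/2}(\Sigma)\hookrightarrow H^{-1/2}(\Sigma)$ and the bounded operator $(\beta^{-1}+\mathcal{R}(\lambda))^{-1}\colon H^{-1/2}(\Sigma)\to H^{1/2}(\Sigma)$ yields a compact operator $L^2(\mathbb{R}^n)\to H^{1/2}(\Sigma)$; finally $\textup{DL}(\lambda)\colon H^{1/2}(\Sigma)\to L^2(\mathbb{R}^n)$ is bounded, so the whole composition is compact in $L^2(\mathbb{R}^n)$, and $\sigma_\textup{ess}(B_\beta)=\sigma_\textup{ess}(A_0)$ follows. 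The only genuinely delicate point in this plan is the gain of regularity from $H^{-1/2}(\Sigma)$ to $H^{1/2}(\Sigma)$ on both sides of the chain, which is precisely what the two compact embeddings together with Proposition~\ref{proposition_Birman_Schwinger_delta_prime}~(iv) are designed to supply; the rest of the argument is a straightforward bookkeeping of traces and jumps.
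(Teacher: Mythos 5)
Your proposal is correct and follows essentially the same route as the paper: the same ansatz $g$, the same use of the jump relations of $\textup{DL}(\lambda)$ and of Proposition~\ref{proposition_Birman_Schwinger_delta_prime}~(iv) to verify $g\in\dom B_\beta$ and $(B_\beta-\lambda)g=f$, and the same compactness-plus-Weyl argument for the essential spectrum. The only imprecision is your claim that $\mathcal{B}_\nu(A_0-\lambda)^{-1}f\in H^{1/2}(\Sigma)$ — on a Lipschitz boundary the normal $\nu$ is merely $L^\infty$, so one only gets $L^2(\Sigma)$ as in the paper; this is harmless, since $L^2(\Sigma)\hookrightarrow H^{-1/2}(\Sigma)$ is still compact and every step of your argument survives with $H^{1/2}(\Sigma)$ replaced by $L^2(\Sigma)$ at that spot.
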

\begin{proof}
  In order to show that $B_\beta$ is self-adjoint, we show that $\ran (B_\beta - \lambda) = L^2(\mathbb{R}^n)$ for $\lambda \in \mathbb{C} \setminus (\sigma(A_0) \cup \sigma_\textup{p}(B_\beta))$. Let $f \in L^2(\mathbb{R}^n)$ be fixed and define
  \begin{equation*} 
    g := (A_0 - \lambda)^{-1} f + \textup{DL}(\lambda) \big( \beta^{-1} + \mathcal{R}(\lambda) \big)^{-1} \mathcal{B}_\nu (A_0 - \lambda)^{-1} f.
  \end{equation*}
  Note that $g$ is well defined, as $\beta^{-1} + \mathcal{R}(\lambda): H^{1/2}(\Sigma) \rightarrow H^{-1/2}(\Sigma)$ admits a bounded inverse for $\lambda \notin \sigma(A_0) \cup \sigma_\textup{p}(B_\beta)$ by Proposition~\ref{proposition_Birman_Schwinger_delta_prime}~(iv).
  We are going to show that $g \in \dom B_\beta$ and $(B_\beta - \lambda) g = f$. This shows then $\ran (B_\beta - \lambda) = L^2(\mathbb{R}^n)$ and also~\eqref{krein_delta_prime}.
  
  Since $(A_0 - \lambda)^{-1} f \in H^2(\mathbb{R}^n)$ by Proposition~\ref{proposition_resolvent} implies $\mathcal{B}_\nu (A_0 - \lambda)^{-1} f \in L^2(\Sigma) \subset H^{-1/2}(\Sigma)$, we conclude from Proposition~\ref{proposition_Birman_Schwinger_delta_prime}~(iv) and Lemma~\ref{lemma_double_layer_potential} that
  \begin{equation*}
    \textup{DL}(\lambda) \big( \beta^{-1} + \mathcal{R}(\lambda) \big)^{-1} \mathcal{B}_\nu (A_0 - \lambda)^{-1} f \in H^1_\mathcal{P}(\mathbb{R}^n \setminus \Sigma).
  \end{equation*}
  Therefore, also $g \in H^1_\mathcal{P}(\mathbb{R}^n \setminus \Sigma)$. Moreover, we get with the help of Lemma~\ref{lemma_double_layer_potential}~(ii) that $\mathcal{B}_\nu g_\text{e} = \mathcal{B}_\nu g_\text{i}$.
  Applying once more Lemma~\ref{lemma_double_layer_potential}~(ii) we conclude
  \begin{equation*}
    \begin{split}
      \beta^{-1}(\gamma g_\text{e} - \gamma g_\text{i}) - \mathcal{B}_\nu g &= \beta^{-1} \big( \beta^{-1} + \mathcal{R}(\lambda) \big)^{-1} \mathcal{B}_\nu (A_0 - \lambda)^{-1} f - \mathcal{B}_\nu (A_0 - \lambda)^{-1} f \\
      &\qquad +  \mathcal{R}(\lambda) \big( \beta^{-1} + \mathcal{R}(\lambda) \big)^{-1} \mathcal{B}_\nu (A_0 - \lambda)^{-1} f = 0,
    \end{split}
  \end{equation*}
  which shows $g \in \dom B_\beta$. Next, we have with $\varphi := ( \beta^{-1} + \mathcal{R}(\lambda) )^{-1} \mathcal{B}_\nu (A_0 - \lambda)^{-1} f$
  \begin{equation*}
    \begin{split}
      (B_\beta - \lambda) g &= (\mathcal{P} - \lambda) (A_0-\lambda)^{-1} f + (\mathcal{P}-\lambda) (\text{DL}(\lambda) \varphi)_\text{i} \oplus (\mathcal{P}-\lambda) (\text{DL}(\lambda) \varphi)_\text{e} = f,
    \end{split}
  \end{equation*}
  where~\eqref{range_double_layer_potential} was used in the last step.
  With the previous considerations we deduce now the self-adjointness of $B_\beta$ and~\eqref{krein_delta_prime}.
  
  It remains to show assertion~(ii). Let $\lambda \in \mathbb{C} \setminus \mathbb{R}$ be fixed. Due to the mapping properties of the resolvent of $A_0$ from Proposition~\ref{proposition_resolvent} and the mapping properties of $\mathcal{B}_\nu$ from~\eqref{conormal_derivative_extension} the operator
  \begin{equation*}
    \mathcal{B}_\nu (A_0-\lambda)^{-1}: L^2(\mathbb{R}^n) \rightarrow L^2(\Sigma) \hookrightarrow H^{-1/2}(\Sigma)
  \end{equation*}
  is bounded. Hence, Proposition~\ref{proposition_Birman_Schwinger_delta_prime}~(iv) yields that
  \begin{equation*}
    \big( \beta^{-1} + \mathcal{R}(\lambda) \big)^{-1} \mathcal{B}_\nu (A_0 - \lambda)^{-1}: L^2(\mathbb{R}^n) \rightarrow H^{1/2}(\Sigma)
  \end{equation*}
  is bounded. As $H^{1/2}(\Sigma)$ is compactly embedded in $L^2(\Sigma)$ we conclude that the latter operator is compact from $L^2(\mathbb{R}^n)$ to $L^2(\Sigma)$. Since $\text{DL}(\lambda): L^2(\Sigma) \rightarrow L^2(\mathbb{R}^n)$ is bounded by~\eqref{def_double_layer_potential}, we find eventually that
  \begin{equation*}
    (B_\beta - \lambda)^{-1} - (A_0 - \lambda)^{-1} = \textup{DL}(\lambda) \big( \beta^{-1} + \mathcal{R}(\lambda) \big)^{-1} \mathcal{B}_\nu (A_0 - \lambda)^{-1}
  \end{equation*}
  is compact in $L^2(\mathbb{R}^n)$. Therefore, we get with the Weyl theorem $\sigma_\text{ess}(B_\beta) = \sigma_\text{ess}(A_0)$.
\end{proof}

The following result is the counterpart of Proposition~\ref{proposition_Birman_Schwinger_kernel_delta} on the inverse of the Birman-Schwinger operator $\beta^{-1} + \mathcal{R}(\lambda)$, which will be of great importance for the numerical calculation of the discrete eigenvalues of $B_\beta$ via boundary element methods.

\begin{prop} \label{proposition_Birman_Schwinger_kernel_delta_prime}
  Let $\beta$ be a real valued function with $\beta^{-1} \in L^\infty(\Sigma)$ and let $B_\beta$ be defined by~\eqref{def_B_beta}. Then the map 
  \begin{equation*}
    \rho(B_\beta) \cap \rho(A_0) \ni \lambda \mapsto \big(\beta^{-1} + \mathcal{R}(\lambda) \big)^{-1}
  \end{equation*}
  can be extended to a holomorphic operator-valued function, which is holomorphic in $\rho(B_\beta)$ with respect to the toplogy in $\mathcal{B}(H^{-1/2}(\Sigma), H^{1/2}(\Sigma))$. Moreover, for $\lambda_0 \notin \sigma_\textup{ess}(B_\beta) = \sigma_\textup{ess}(A_0)$ one has $\ker(B_\beta - \lambda_0) \ominus \ker(A_0-\lambda_0) \neq \{ 0 \}$ if and only if $(\beta^{-1} + \mathcal{R}(\lambda) )^{-1}$ has a pole at $\lambda_0$ and
  \begin{equation} \label{kernel_inverse_delta_prime}
    \ker (B_\beta - \lambda_0) \ominus \ker (A_0-\lambda_0) = \big\{ \textup{DL}(\lambda_0) \varphi: \lim_{\lambda \rightarrow \lambda_0} (\lambda - \lambda_0) (\beta^{-1} + \mathcal{R}(\lambda))^{-1} \varphi \neq 0 \big\}.
  \end{equation}
\end{prop}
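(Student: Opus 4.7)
The argument proceeds in complete parallel to Proposition~\ref{proposition_Birman_Schwinger_kernel_delta}, with the roles of $(\gamma,\mathcal{B}_\nu,\textup{SL},\mathcal{S},\alpha,[\mathcal{B}_\nu]_\Sigma)$ interchanged with $(\mathcal{B}_\nu,\gamma,\textup{DL},\mathcal{R},\beta^{-1},[\gamma]_\Sigma)$, where $[\gamma]_\Sigma f := \gamma f_\text{e}-\gamma f_\text{i}$ denotes the jump of the Dirichlet trace, bounded from $H^1_{\mathcal{P}}(\mathbb{R}^n\setminus\Sigma)$ to $H^{1/2}(\Sigma)$. The first step will be to derive, for $\lambda\in\rho(A_0)\cap\rho(B_\beta)$, the key identity
\[
  (\beta^{-1}+\mathcal{R}(\lambda))^{-1} = [\gamma]_\Sigma (B_\beta-\lambda)^{-1}[\gamma]_\Sigma^*.
\]
Applying $[\gamma]_\Sigma$ to the Krein formula~\eqref{krein_delta_prime}, I would note that $(A_0-\lambda)^{-1}$ maps $L^2(\mathbb{R}^n)$ into $H^2(\mathbb{R}^n)$ and hence is annihilated by $[\gamma]_\Sigma$, while the jump relation $[\gamma]_\Sigma\textup{DL}(\lambda)=I$ from Lemma~\ref{lemma_double_layer_potential}~(ii) reduces the second term to $(\beta^{-1}+\mathcal{R}(\lambda))^{-1}\mathcal{B}_\nu(A_0-\lambda)^{-1}$. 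Taking the adjoint (and exploiting $\mathcal{R}(\overline{\lambda})^*=\mathcal{R}(\lambda)$ together with the self-adjointness of $A_0$ and $B_\beta$) produces the dual relation $(B_\beta-\lambda)^{-1}[\gamma]_\Sigma^* = \textup{DL}(\lambda)(\beta^{-1}+\mathcal{R}(\lambda))^{-1}$, and a final application of $[\gamma]_\Sigma$ combined once more with $[\gamma]_\Sigma\textup{DL}(\lambda)=I$ yields the identity.

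In the second step I extend this identity to all of $\rho(B_\beta)$. The operator $[\gamma]_\Sigma(B_\beta-\overline{\lambda})^{-1}:L^2(\mathbb{R}^n)\to H^{1/2}(\Sigma)$ is bounded and holomorphic in $\lambda\in\rho(B_\beta)$, so its adjoint defines a bounded holomorphic operator $(B_\beta-\lambda)^{-1}[\gamma]_\Sigma^*:H^{-1/2}(\Sigma)\to L^2(\mathbb{R}^n)$. To upgrade the codomain to $H^1_{\mathcal{P}}(\mathbb{R}^n\setminus\Sigma)$ for every $\lambda\in\rho(B_\beta)$, and not only for those in $\rho(A_0)\cap\rho(B_\beta)$ where the Krein representation is directly available, I run the resolvent-identity bootstrap exactly as in Step~2 of the proof of Proposition~\ref{proposition_Birman_Schwinger_kernel_delta}: fix a reference $\mu\in\rho(A_0)\cap\rho(B_\beta)$ where $(B_\beta-\mu)^{-1}[\gamma]_\Sigma^*$ is already known by Step~1 to map into $H^1_{\mathcal{P}}(\mathbb{R}^n\setminus\Sigma)$, and propagate this along $(B_\beta-\lambda)^{-1}=(B_\beta-\mu)^{-1}+(\lambda-\mu)(B_\beta-\lambda)^{-1}(B_\beta-\mu)^{-1}$, using that $(B_\beta-\lambda)^{-1}:L^2(\mathbb{R}^n)\to H^1_{\mathcal{P}}(\mathbb{R}^n\setminus\Sigma)$ is bounded throughout $\rho(B_\beta)$. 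Composing with the bounded trace jump $[\gamma]_\Sigma:H^1_{\mathcal{P}}(\mathbb{R}^n\setminus\Sigma)\to H^{1/2}(\Sigma)$ then delivers the desired holomorphic extension of $(\beta^{-1}+\mathcal{R}(\lambda))^{-1}$ into $\mathcal{B}(H^{-1/2}(\Sigma),H^{1/2}(\Sigma))$.

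Finally, the kernel equivalence~\eqref{kernel_inverse_delta_prime} follows from Proposition~\ref{proposition_Birman_Schwinger_delta_prime}~(i), which describes $\ker(B_\beta-\lambda_0)\ominus\ker(A_0-\lambda_0)$ as $\{\textup{DL}(\lambda_0)\varphi:\varphi\in\mathcal{N}_{\lambda_0},\,(\beta^{-1}+\mathcal{R}(\lambda_0))\varphi=0\}$: such a nontrivial $\varphi$ exists precisely when the holomorphic extension has a pole at $\lambda_0$, and the range of the residue $\lim_{\lambda\to\lambda_0}(\lambda-\lambda_0)(\beta^{-1}+\mathcal{R}(\lambda))^{-1}$ recovers $\ker(\beta^{-1}+\mathcal{R}(\lambda_0))\cap\mathcal{N}_{\lambda_0}$. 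The main technical difficulty lies in Step~2: since $[\gamma]_\Sigma$ is bounded only on $H^1_{\mathcal{P}}(\mathbb{R}^n\setminus\Sigma)$ and not on $L^2(\mathbb{R}^n)$, the symbol $[\gamma]_\Sigma^*$ must be interpreted throughout as the abstract dual of the bounded operator $[\gamma]_\Sigma(B_\beta-\overline{\lambda})^{-1}$, and the codomain upgrade to $H^1_{\mathcal{P}}$ depends on delicately using that the $B_\beta$-resolvent maps $L^2(\mathbb{R}^n)$ into $H^1_{\mathcal{P}}(\mathbb{R}^n\setminus\Sigma)$ holomorphically, not merely into $L^2(\mathbb{R}^n)$.
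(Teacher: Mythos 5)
Your proposal is correct and follows essentially the same route as the paper's proof: the identity $(\beta^{-1}+\mathcal{R}(\lambda))^{-1}=[\gamma]_\Sigma(B_\beta-\lambda)^{-1}[\gamma]_\Sigma^*$ on $\rho(A_0)\cap\rho(B_\beta)$, the duality step giving $(B_\beta-\lambda)^{-1}[\gamma]_\Sigma^*=\textup{DL}(\lambda)(\beta^{-1}+\mathcal{R}(\lambda))^{-1}$, a resolvent-identity bootstrap to extend everything holomorphically to all of $\rho(B_\beta)$, and Proposition~\ref{proposition_Birman_Schwinger_delta_prime}~(i) for the kernel formula. The only (harmless) variation is in Step~1, where you apply $[\gamma]_\Sigma$ directly to the Krein formula and use $[\gamma]_\Sigma\textup{DL}(\lambda)=I$, whereas the paper first rewrites $[\gamma]_\Sigma=\beta\mathcal{B}_\nu$ on $\dom B_\beta$ and simplifies algebraically; both yield the same intermediate identity $[\gamma]_\Sigma(B_\beta-\lambda)^{-1}=(\beta^{-1}+\mathcal{R}(\lambda))^{-1}\mathcal{B}_\nu(A_0-\lambda)^{-1}$.
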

\begin{proof}
  The proof is similar as the proof of Proposition~\ref{proposition_Birman_Schwinger_kernel_delta} and split into 3 steps.
  
  {\it Step 1:} Define the map 
  \begin{equation*}
    [\gamma]_\Sigma: H^1_\mathcal{P}(\mathbb{R}^n \setminus \Sigma) \rightarrow H^{1/2}(\Sigma), \quad
    [\gamma]_\Sigma f := \gamma f_\text{e} - \gamma f_\text{i}.
  \end{equation*}
  Let $\lambda \in \rho(B_\beta) \cap \rho(A_0)$ be fixed. We show that
  \begin{equation} \label{Birman_Schwinger_inverse_delta_prime}
    \big(\beta^{-1} + \mathcal{R}(\lambda) \big)^{-1} = [\gamma]_\Sigma (B_\beta - \lambda)^{-1} [\gamma]_\Sigma^*.
  \end{equation}
  In particular, with Proposition~\ref{proposition_Birman_Schwinger_delta_prime}~(iv) this implies that $[\gamma]_\Sigma (B_\beta - \lambda)^{-1} [\gamma]_\Sigma$ belongs to $\mathcal{B}(H^{-1/2}(\Sigma), H^{1/2}(\Sigma))$. To show~\eqref{Birman_Schwinger_inverse_delta_prime} we note first that $[\gamma]_\Sigma f = \beta \mathcal{B}_\nu f$ holds for $f \in \dom B_\beta$ and hence~\eqref{krein_delta_prime} implies
  \begin{equation*}
    \begin{split}
      [\gamma]_\Sigma (B_\beta - \overline{\lambda})^{-1} &= \beta \mathcal{B}_\nu (B_\beta - \overline{\lambda})^{-1} \\
      &= \beta \mathcal{B}_\nu (A_0-\overline{\lambda})^{-1} - \beta \mathcal{R}(\overline{\lambda}) \big(\beta^{-1} + \mathcal{R}(\overline{\lambda}) \big)^{-1} \mathcal{B}_\nu (A_0-\overline{\lambda})^{-1} \\
      &= \beta \big[ \big(\beta^{-1} + \mathcal{R}(\overline{\lambda}) \big) - \mathcal{R}(\overline{\lambda}) \big] \big(\beta^{-1} + \mathcal{R}(\overline{\lambda}) \big)^{-1} \mathcal{B}_\nu (A_0-\overline{\lambda})^{-1} \\
      &= \big(\beta^{-1} + \mathcal{R}(\overline{\lambda}) \big)^{-1} \mathcal{B}_\nu (A_0-\overline{\lambda})^{-1},
    \end{split}
  \end{equation*}
  which implies, after taking the dual,
  \begin{equation*}
    \begin{split}
      (B_\beta - \lambda)^{-1} [\gamma]_\Sigma^* = \text{DL}(\lambda) \big(\beta^{-1} + \mathcal{R}(\lambda) \big)^{-1}.
    \end{split}
  \end{equation*}
  In particular, by Lemma~\ref{lemma_double_layer_potential} and Proposition~\ref{proposition_Birman_Schwinger_delta_prime}~(iv) the right hand side belongs to $\mathcal{B}(H^{-1/2}(\Sigma), H^1_\mathcal{P}(\mathbb{R}^n \setminus \Sigma))$ and thus the same must be true for $(B_\beta - \lambda)^{-1} [\gamma]_\Sigma^*$. Therefore, we are allowed to apply $[\gamma]_\Sigma$ and the last formula shows, with the help of Lemma~\ref{lemma_double_layer_potential}~(ii), the relation~\eqref{Birman_Schwinger_inverse_delta_prime}.

  {\it Step 2:} We show that $[\gamma]_\Sigma (B_\beta - \lambda)^{-1} [\gamma]_\Sigma^* \in \mathcal{B}(H^{-1/2}(\Sigma), H^{1/2}(\Sigma))$ for any $\lambda \in \rho(B_\beta)$ and that the mapping $\rho(B_\beta) \ni \lambda \mapsto [\gamma]_\Sigma (B_\beta - \lambda)^{-1} [\gamma]_\Sigma^*$ is holomorphic in $\mathcal{B}(H^{-1/2}(\Sigma), H^{1/2}(\Sigma))$.
  
  First, we note that $\dom B_\beta \subset H^1_\mathcal{P}(\mathbb{R}^n \setminus \Sigma)$ implies that
  \begin{equation*}
    (B_\beta - \lambda)^{-1} \in \mathcal{B}(L^2(\mathbb{R}^n), H^1_\mathcal{P}(\mathbb{R}^n \setminus \Sigma)),
  \end{equation*}
  see~\eqref{mapping_properties_resolvent} for a similar argument. Hence, by duality also 
  \begin{equation*}
    (B_\beta - \lambda)^{-1} \in \mathcal{B}((H^1_\mathcal{P}(\mathbb{R}^n\setminus \Sigma))^*, L^2(\mathbb{R}^n)).
  \end{equation*}
  With the resolvent identity this implies for any $\lambda_0 \in \rho(B_\beta)$ and $\lambda \in \rho(B_\beta) \cap \rho(A_0)$, in a similar way as in the proof of Proposition~\ref{proposition_resolvent}, that
  \begin{equation*}
    \begin{split}
      [\gamma]_\Sigma (B_\beta - \lambda_0)^{-1} [\gamma]_\Sigma^* &- [\gamma]_\Sigma (B_\beta - \lambda)^{-1} [\gamma]_\Sigma^* \\
      &= (\lambda_0 - \lambda) [\gamma]_\Sigma (B_\beta - \lambda_0)^{-1} (B_\beta - \lambda)^{-1} [\gamma]_\Sigma^*,
    \end{split}
  \end{equation*}
  which shows first with~\eqref{Birman_Schwinger_inverse_delta_prime} that $[\gamma]_\Sigma (B_\beta - \lambda_0)^{-1} [\gamma]_\Sigma^* \in \mathcal{B}(H^{-1/2}(\Sigma), H^{1/2}(\Sigma))$ and in a second step, that $[\gamma]_\Sigma (B_\beta - \lambda)^{-1} [\gamma]_\Sigma^*$ is holomorphic in $\mathcal{B}(H^{-1/2}(\Sigma), H^{1/2}(\Sigma))$, which shows the claim of this step.

  {\it Step 3:} Finally, it follows from Proposition~\ref{proposition_Birman_Schwinger_delta_prime}~(i) that $\ker(B_\beta - \lambda_0) \ominus \ker(A_0-\lambda_0) \neq \{ 0 \}$ if and only if there exists $\varphi \in \mathcal{N}_{\lambda_0}$ such that $(\beta^{-1} + \mathcal{R}(\lambda_0)) \varphi = 0$, i.e. if and only if $\lambda \mapsto (\beta^{-1} + \mathcal{R}(\lambda))^{-1}$ has a pole at $\lambda_0$. This shows immediately \eqref{kernel_inverse_delta_prime}.
\end{proof}

\subsection{Numerical approximation of discrete eigenvalues of $B_\beta$}\label{SubSectNumDeltaPrime}
\newcommand{\mr}{\mathcal{R}}
\newcommand{\mrf}{(\beta^{-1}+\mr(\cdot))}
\newcommand{\mrflambda}{\beta^{-1}+\mr(\lambda)}

The approximation of the discrete eigenvalues of $B_\beta$ by boundary element methods is based on the same principles as those for the discrete eigenvalues of $A_\alpha$  described in Section~\ref{SubSection:ApproxDelta}.
In order to apply boundary element methods for the approximation of the discrete eigenvalues of $B_\beta$ it is necessary to have an integral representation of the paramatrix  ${\mathcal G}(\lambda)$ of ${\mathcal P}-\lambda$ or at least a good approximation of the boundary integral operator $\mathcal{R}(\lambda)$.
We use the characterization of the discrete   eigenvalues of $B_\beta$ in terms of boundary integral operators  given in Proposition~\ref{proposition_Birman_Schwinger_delta_prime} and   Proposition~\ref{proposition_Birman_Schwinger_kernel_delta_prime}. The discrete eigenvalues split into  the eigenvalues of the nonlinear eigenvalue problem
\begin{equation}\label{Eq:EigContinuousDeltaPrimeBIE}
(\beta^{-1} +\mr(\lambda))\psi=0
\end{equation}
in $\rho(A_0)$ and into distinct discrete eigenvalues of $A_0$ which are either the poles of $\ma(\cdot)$ satisfying the properties specified in Proposition~\ref{proposition_Birman_Schwinger_delta_prime}~(iii) or poles of $\mrf^{-1} $ in $\sigma_\text{disc}(A_0)$.

In the following presentation of the boundary element method we want to consider first the case that $\sigma_\text{disc}(A_0)=\varnothing$ and then the general case.
If $\sigma_\text{disc}(A_0)=\varnothing$,  then the discrete eigenvalues of $B_\beta$ coincide 
with the eigenvalues of the nonlinear eigenvalue problem~\eqref{Eq:EigContinuousDeltaPrimeBIE} in $\rho(A_0)$ as shown in Proposition~\ref{proposition_Birman_Schwinger_delta_prime}~(ii). In this situation a complete  convergence analysis   is provided by the theory of Section~\ref{SectionGalerkinApprox}. For the general case the convergence of the approximations of the discrete eigenvalues of $B_\beta$ which lie in $\sigma_\text{disc}(A_0)$ is an open issue.

The discretized problems for the approximation of the discrete eigenvalues of $B_\beta$ which result  from the approximations of the boundary integral operators by boundary element methods are problems for the determination of poles of meromorphic matrix-valued functions. For this kind of problems we suggest the contour integral method~\cite{Beyn}, which was  discussed in Section~\ref{SubsubsectionApproxDeltaEinfach}.

\subsubsection{Approximation of discrete eigenvalues of $B_\alpha$ for the case $\sigma_\textnormal{disc}(A_0)=\varnothing$} 
For $\sigma_\text{disc}(A_0)=\varnothing$ the discrete eigenvalues of $B_\beta$ coincide, according to Proposition~\ref{proposition_Birman_Schwinger_delta_prime}~(ii), with the eigenvalues of the eigenvalue problem for $\mrf$. Lemma~\ref{lemma_hypersing_bdd_int_op}~(iii) shows that the map $\rho(A_0)\ni\lambda\mapsto(\mrflambda)$ is holomorphic in $\mathcal{B}(H^{1/2}(\Sigma),H^{-1/2}(\Sigma))$. Moreover, by Lemma~\ref{lemma_hypersing_bdd_int_op}~(i) the operators $\mrflambda$ satisfy for $\lambda\in\rho(A_0)$ G\r{a}rding's  inequality of the form~\eqref{Inequality:Garding}. Hence, any conforming  Galerkin method for the approximation of the eigenvalue problem~\eqref{Eq:EigContinuousDeltaPrimeBIE} is a convergent method, which follows from the theory in Section~\ref{SectionGalerkinApprox}.  

For the boundary element approximation  of the eigenvalue problem~\eqref{Eq:EigContinuousDeltaPrimeBIE} we first assume that $\Omega_\text{i}$ is a polyhedral Lipschitz domain. The case of general Lipschitz domains is addressed in Remark~\ref{Remark:GeneralLipschitzDomainDeltaPrime}. Let $(\triang_N)_{N\in\N}$ be a sequence of quasi-uniform triangulations of $\Omega_\text{i}$ with the properties specified in~\eqref{Eq:PropertiesOfTriangulations}. As approximation space for the approximation of the eigenfunctions of the eigenvalue problem~\eqref{Eq:EigContinuousDeltaPrimeBIE} we choose  the space $S^1(\triang_N)$ of piecewise linear functions with respect to the triangulation $\triang_N$. 
The approximation property of $S_1(\triang_N)$ depends on the regularity of the functions which are approximated. In order to measure the regularity of functions defined on a piecewise smooth boundary $\Sigma$,  partitioned by open sets $\Sigma_1,\ldots,\Sigma_J$ such that  
\begin{equation*}
 \Sigma=\bigcup_{j=1}^J\overline{\Sigma}_j,\; \Sigma_j\cap\Sigma_i =\varnothing\;\text{for }i\neq j,
\end{equation*}
so-called piecewise Sobolev spaces of order $s> 1$ defined by   
\begin{equation*}
\begin{split}
 H_{\text{pw}}^{s}(\Sigma):=\{v\in H^1(\Sigma)\;:\;v\upharpoonright\Sigma_j\in H^s(\Sigma_j)\text{ for }j=1,\ldots, J\}
\end{split}
\end{equation*}
are used,  see  \cite[Definition~4.1.48]{SauterSchwab}. For $s\in[0,1]$ the space $H^s_{\text{pw}}(\Sigma)$ is defined by  $H^s_{\text{pw}}(\Sigma):=H^s(\Sigma)$. If $W$ is  a finite dimensional subspace of $H^{1/2+s}_{\text{pw}}(\Sigma)$ for $s\in(0,\tfrac{3}{2}]$, then  
\begin{equation}\label{Eq:ApproxPiecewiseLinear}
 \delta_{H^{1/2}(\Sigma)}(W, S_1(\triang_N))
=\sup_{\substack{w\in W\\
\|w\|_{H^{1/2}(\Sigma)}= 1}} \inf_{\psi_N\in S_1(\triang_N)}\|w-\psi_N\|_{H^{1/2}(\Sigma)}
 =\mathcal{O}(h(N)^{s})
\end{equation}
holds~\cite[Proposition~4.1.50]{SauterSchwab}. 

The Galerkin approximation of the eigenvalue problem~\eqref{Eq:EigContinuousDeltaPrimeBIE} reads as follows: find eigenvalues $\lambda_N\in\C$ and corresponding eigenfunctions $\psi_N\in S_1(\triang_N)\setminus\{0\}$ such that
\begin{equation}\label{Eq:GalerkinEVPDeltaPrime}
\left((\beta^{-1} +  \mathcal{R}(\lambda_N)) \psi_N,\chi_N\right) = 0\qquad\forall \chi_N\in S_1(\triang_N).
\end{equation}
We can apply all convergence results from Theorem~\ref{Theorm:AbstrctConvergenceResults}   to the Galerkin eigenvalue problem~\eqref{Eq:GalerkinEVPDeltaPrime}. 
In the following theorem  the asymptotic convergence order of the approximations of the eigenvalues and the corresponding eigenfunctions are specified. 
\begin{thm}\label{Thm:ConvergenceDeltaPrime}
Let $D\subset\rho(A_0)$ be a compact  and    connected set in $\C$ such that $\partial D$ is a simple 
rectifiable curve. Suppose that $\lambda\in
\mathring{D}$ is the only eigenvalue of $\mrf$ in $D$ and that 
$\ker(\mrflambda)\subset H^{1/2+s}_{\text{pw}}(\Sigma)$ for some $s\in(0,\tfrac{3}{2}]$.
Then
there exist an
$N_0\in\N$ and a constant  $c>0$ such that for all $N\geq N_0$  we
have:
\begin{itemize}
\item[(i)] For all eigenvalues $\lambda_N$ of the Galerkin eigenvalue problem~\eqref{Eq:GalerkinEVPDeltaPrime} in $D$
\begin{equation}\label{Eq:ErrorApproxBoundaryDeltaPrime}
 \vert \lambda-\lambda_N\vert \leq
c (h(N))^{2s}
\end{equation}
holds.
\item[(ii)] If  $(\lambda_N,u_N)$ is an eigenpair of 
\eqref{Eq:GalerkinEVPDeltaPrime} with $\lambda_N\in D$ and  
$\|\psi_N\|_{H^{1/2}(\Sigma)}=1$, then 
\begin{equation}\label{Eq:ErrorApproxBoundaryDeltaPrimeFunc}
 \inf_{\psi\in \ker(\mrflambda)}\| \psi-\psi_N\|_{H^{1/2}(\Sigma)}
 \leq 
 c 
 \left(\vert \lambda_N-\lambda \vert + (h(N))^s \right).
\end{equation}
\end{itemize}
\end{thm}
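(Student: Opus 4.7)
The plan is to mimic the proof of Theorem~\ref{Thm:ConvergenceDeltaInRhoAzero} by reducing both error estimates to the abstract Galerkin convergence result Theorem~\ref{Theorm:AbstrctConvergenceResults}~(iii), applied with $X=H^{1/2}(\Sigma)$, $X^*=H^{-1/2}(\Sigma)$, and the operator-valued function $\mathcal{F}(\lambda):=\beta^{-1}+\mathcal{R}(\lambda)$. First I would verify the abstract hypotheses: holomorphy of $\mathcal{F}(\cdot)$ on $\rho(A_0)$ in $\mathcal{B}(H^{1/2}(\Sigma),H^{-1/2}(\Sigma))$ is immediate from Lemma~\ref{lemma_hypersing_bdd_int_op}~(ii) together with the boundedness of multiplication by $\beta^{-1}\in L^\infty(\Sigma)$ from $H^{1/2}(\Sigma)$ into $L^2(\Sigma)\hookrightarrow H^{-1/2}(\Sigma)$ (compact, hence harmless); G\r{a}rding's inequality \eqref{Inequality:Garding} for $\mathcal{F}(\lambda)$ follows from the one for $\mathcal{R}(\lambda)$ in Lemma~\ref{lemma_hypersing_bdd_int_op}~(i), absorbing the compact $\beta^{-1}$ into the compact perturbation. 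The density property \eqref{Eq:ApproxSpace} for $S_1(\triang_N)\subset H^{1/2}(\Sigma)$ on a quasi-uniform family is standard, cf.\ \cite[Prop.~4.1.50]{SauterSchwab}.

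For assertion (i) I would apply Theorem~\ref{Theorm:AbstrctConvergenceResults}~(iii)(a) to $\mathcal{F}(\cdot)$ on $D$. The maximal Jordan chain length $\ell$ is taken to be $1$ (in line with the statement of Theorem~\ref{Thm:ConvergenceDeltaInRhoAzero}), so that the estimate reduces to
\begin{equation*}
  |\lambda-\lambda_N|\leq c\,\delta_{H^{1/2}(\Sigma)}\bigl(G(\mathcal{F},\lambda),S_1(\triang_N)\bigr)\,\delta_{H^{1/2}(\Sigma)}\bigl(G(\mathcal{F}^*,\lambda),S_1(\triang_N)\bigr).
\end{equation*}
By hypothesis $\ker(\mathcal{F}(\lambda))\subset H^{1/2+s}_{\text{pw}}(\Sigma)$, so \eqref{Eq:ApproxPiecewiseLinear} yields the first factor $=\mathcal{O}(h(N)^s)$. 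For the adjoint factor I would use the symmetry relation $\mathcal{R}(\lambda)^*=\mathcal{R}(\overline{\lambda})$, which follows from $\mathcal{R}(\lambda)=-\mathcal{B}_\nu\mathcal{G}(\lambda)\mathcal{B}_\nu^*$ together with $\mathcal{G}(\lambda)^*=\mathcal{G}(\overline{\lambda})$ (the latter a consequence of self-adjointness of $A_0$); since $\beta$ is real valued one then has $\mathcal{F}^*(\lambda)=\mathcal{F}(\lambda)$ in the sense of the definition in Theorem~\ref{Theorm:AbstrctConvergenceResults}~(iii)(a), so the adjoint generalized eigenspace coincides with $G(\mathcal{F},\lambda)$ and the second factor is also $\mathcal{O}(h(N)^s)$. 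Multiplying gives \eqref{Eq:ErrorApproxBoundaryDeltaPrime}.

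Assertion (ii) would follow by directly invoking Theorem~\ref{Theorm:AbstrctConvergenceResults}~(iii)(b): for any eigenpair $(\lambda_N,\psi_N)\in D\times S_1(\triang_N)$ with $\|\psi_N\|_{H^{1/2}(\Sigma)}=1$ one has
\begin{equation*}
  \inf_{\psi\in\ker\mathcal{F}(\lambda)}\|\psi-\psi_N\|_{H^{1/2}(\Sigma)}\leq c\bigl(|\lambda-\lambda_N|+\delta_{H^{1/2}(\Sigma)}(\ker\mathcal{F}(\lambda),S_1(\triang_N))\bigr),
\end{equation*}
and \eqref{Eq:ApproxPiecewiseLinear} together with the regularity assumption bounds the last gap by $c\,h(N)^s$, yielding \eqref{Eq:ErrorApproxBoundaryDeltaPrimeFunc}.

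The main obstacle I anticipate is the identification of the adjoint generalized eigenspaces with the primal ones, which is needed to get the doubled convergence order $h^{2s}$ rather than merely $h^s$. Verifying $\mathcal{R}(\lambda)^*=\mathcal{R}(\overline{\lambda})$ rigorously within the duality setup of the paper (where $\gamma^*$ and $\mathcal{B}_\nu^*$ are anti-dual operators and $\mathcal{G}(\lambda)$ only becomes self-dual after a careful identification) is the delicate point; alternatively, one can read off this symmetry from the formulation $(\varphi,\mathcal{T}(\lambda)\psi)=(\mathcal{T}(\overline{\lambda})'\varphi,\psi)$ in Lemma~\ref{lemma_hypersing_bdd_int_op}~(v) and an analogous identity for $\mathcal{R}$, which is the standard counterpart established along the lines of \cite[Ch.~7]{M00}.
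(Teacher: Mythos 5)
Your proposal is correct and follows essentially the same route as the paper: verify holomorphy (Lemma~\ref{lemma_hypersing_bdd_int_op}) and G\r{a}rding's inequality for $\beta^{-1}+\mathcal{R}(\cdot)$, note that $S_1(\triang_N)\subset H^{1/2}(\Sigma)$ gives a conforming Galerkin scheme, and then invoke Theorem~\ref{Theorm:AbstrctConvergenceResults}~(iii) together with the approximation property~\eqref{Eq:ApproxPiecewiseLinear}. In fact you supply more detail than the paper on the one point it merely asserts — that the adjoint eigenfunctions are as regular as the primal ones — by identifying $\mathcal{F}^*(\lambda)=\mathcal{F}(\lambda)$ via $\mathcal{R}(\overline{\lambda})^*=\mathcal{R}(\lambda)$ and the realness of $\beta$, which is exactly the justification needed for the doubled order $h^{2s}$.
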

\begin{proof}
We have shown in Lemma~\ref{lemma_hypersing_bdd_int_op}~(iii) that the map $\rho(A_0)\ni\lambda\mapsto(\mrflambda)$ is holomorphic in $\mathcal{B}(H^{1/2}(\Sigma),H^{-1/2}(\Sigma))$. Moreover, by Lemma~\ref{lemma_hypersing_bdd_int_op}~(i),  the operators $\mrflambda$ satisfy for $\lambda\in\rho(A_0)$ G\r{a}rding's  inequality of the form~\eqref{Inequality:Garding}.
The Galerkin approximation~\eqref{Eq:GalerkinEVPDeltaPrime} of the eigenvalue problem for $\mrf$ is a conforming approximation since $S^1(\triang_N)$ is a subspace of $H^{1/2}(\Sigma)$. Hence, we can use Theorem~\ref{Theorm:AbstrctConvergenceResults}. The error estimates follows from the approximation property~\eqref{Eq:ApproxPiecewiseLinear} of $S^1(\triang_N)$ and the fact, that the eigenfunction of the adjoint problem are as regular as for  $\mrf$.
\end{proof}

\begin{remark}\label{Remark:GeneralLipschitzDomainDeltaPrime}
If $\Omega$ is a bounded Lipschitz domain with a curved piecewise $C^2$-boundary the approximation of the boundary by a triangulation with flat triangles as described in~\cite[Chapter~8]{SauterSchwab} reduces the maximal possible convergence order $s$ for the error of the eigenvalues in~\eqref{Eq:ErrorApproxBoundaryDeltaPrime} and  for the error of the eigenfunctions in~\eqref{Eq:ErrorApproxBoundaryDeltaPrimeFunc} from $s=\tfrac{3}{2}$ to $s=1$. 
This follows from  the results of the discretization of boundary integral operators for approximated boundaries \cite[Chapter~8]{SauterSchwab} and from the abstract results of eigenvalue problem approximations \cite{Karma1,Karma2}.   
\end{remark}

\subsubsection{Approximation of discrete eigenvalues of $B_\alpha$ for the case $\sigma_\textnormal{disc}(A_0)\neq\varnothing$ }
If $\sigma_\text{disc}(A_0)\neq\varnothing$, then  Proposition~\ref{proposition_Birman_Schwinger_delta_prime} and   Proposition~\ref{proposition_Birman_Schwinger_kernel_delta_prime} imply that the discrete eigenvalues of $B_\beta$ are  poles of $\mrf^{-1}$ or poles of $\ma(\cdot)$ with the property given in Proposition~\ref{proposition_Birman_Schwinger_delta_prime}~(iii). These characterizations are used for the approximation of the discrete eigenvalues of $B_\beta$.  We will separately discuss both cases.

Let $\lambda_0$ be a discrete eigenvalue of $B_\beta$ and in addition be a pole of $\mrf^{-1}$. Then $\mrf$ is  either holomorphic in $\lambda_0$, which is the case for $\lambda_0\in\rho(A_0)$, or $\lambda_0$ is a pole of $\mrf$. 
A pole $\lambda_0\in\rho(A_0)$ of $\mrf^{-1}$ can be considered as an eigenvalue of the eigenvalue problem for the homomorphic   Fredholm operator-valued function $\mrf$ in $\rho(A_0)$ and the convergence results of Section~\ref{SectionGalerkinApprox} can be applied with the same reasoning as in the case of $\sigma_\text{disc}(A_0)=\varnothing$.  If    $\lambda_0\in\sigma_\text{disc}(A_0)$ is a pole of $\mrf$, then the convergence theory of Section~\ref{SectionGalerkinApprox} is not applicable for $\lambda_0$. We expect convergence of the approximations for this kind of poles of $\mrf^{-1}$, but a rigorous numerical analysis has not established so far. 
   
The approximation of a discrete eigenvalue $\lambda_0$  of $B_\beta$ which is not a pole of $\mrf^{-1}$ is based on the following characterization  from Proposition~\ref{proposition_Birman_Schwinger_delta_prime}~(iii): $\lambda_0$ is a pole of $\ma(\cdot)$ and there exists a  pair $(0,0)\neq(\psi,\varphi)\in H^{1/2}(\Sigma)\times H^{-1/2}(\Sigma)$ such that
\begin{equation}\label{Eq:EigContAinvDeltaPrime}
 {\mathcal A}(\lambda)^{-1} 
\begin{pmatrix}
 \psi\\
\varphi 
\end{pmatrix}
  =\begin{pmatrix}
 0\\
0
\end{pmatrix}
\qquad\text{and}
\qquad\psi=0.
\end{equation}
For the approximation of the eigenvalues of the nonlinear eigenvalue problem in~\eqref{Eq:EigContAinvDeltaPrime} formally the Galerkin problem in $S^1(\triang_N)\times S^0(\triang_N)$ as given  in~\eqref{Eq:EigContAinvDiscret} is considered.  If the contour integral method is used for the computations of the approximations of the eigenvalues for $\ma(\cdot)^{-1}$, then $\ma(\cdot)^{-1}$ does not have not be computed, but instead its inverse 
$\ma(\cdot)$. The convergence theory of Section~\ref{SectionGalerkinApprox} can be applied to the approximation of those eigenvalues of $\ma(\cdot)^{-1}$ for which $\ma(\cdot)^{-1}$ is holomorphic. If $\lambda_0$ is a pole $
\ma(\cdot)$ and of $\ma(\cdot)^{-1}$, we again expect  convergence, but a numerical analysis for such kind of poles has not been provided so far.  
\subsection{Numerical examples}\label{SubSecNumPrime}
For the numerical examples of the approximation of discrete eigenvalues of $B_\beta$ we choose $\mathcal{P}=-\Delta$. In this case $\sigma_\text{ess}(A_0)=[0,\infty)$,  $\sigma_\text{disc}(A_0)=\varnothing$, the fundamental solution is given by  $G(\lambda;x,y)=e^{i\sqrt{\lambda}\|x-y\|}(4\pi\|x-y\|)^{-1}$,  and the discrete eigenvalues of $B_\beta$ coincide with the eigenvalues of the nonlinear eigenvalue problem for $\mrf$. The Galerkin eigenvalue problem~\eqref{Eq:GalerkinEVPDeltaPrime} is used for the computation of approximations of discrete eigenvalues of $B_\beta$ and corresponding eigenfunctions.

\subsubsection{Unit ball} We consider  for the first numerical example as domain $\Omega_\text{i}$ again the unit ball.  
Analytical representations for  the discrete eigenvalues of $B_\beta$ are known in this case~\cite[Section~6]{AGS87}  and are used  to compute the errors of the approximations and to check the predicted asymptotic error estimate~\eqref{Eq:ErrorApproxBoundaryDeltaPrime}. 
The errors of the approximations of the eigenvalues of $B_\beta$ with $\beta^{-1}=-1.5$ which lie inside the contour  $g(t)=c+a\cos(t)+i b \sin(t)$, $t\in [0,2\pi]$, with $c=-6.0$,  $a=5.99$  
and  $b=0.01$
 are  given in Table~\ref{Table:ErrorsPrime} for three different mesh sizes $h$. We denote by  $\widehat{\lambda}^{(l)}_{h}$, $l=1,2$,  the mean value of the approximations of the multiple eigenvalues $\lambda^{(l)}$.
A quadratic experimental convergence order (eoc) can be observed  which is according  to Remark~\ref{Remark:GeneralLipschitzDomainDeltaPrime} the best possible convergence order if flat triangles are used for the triangulation of a curved boundary as it has been  done in our experiments.  In
Figure~\ref{Fig:EigFunctionsPlanePrime} plots of  computed eigenfunctions of
$B_{\beta}$ in the $xy$-plane are given where for each exact eigenvalue
one approximated eigenfunction is selected.
\begin{table}[h]
\begin{center}
\begin{tabular}{l r r  r  r  r  r r r r r  r  r  r  r r r}
\toprule
$h$   &  & $ \frac{\left\vert\lambda^{(0)}_{h}-\lambda^{(0)}\right\vert}{\left\vert \lambda^{(0)}\right\vert} $ &eoc& &
$ \frac{\left\vert\widehat\lambda^{(1)}_{h}-\lambda^{(1)}\right\vert}{\left\vert \lambda^{(1)}\right\vert} $
 & eoc& & 
 $ \frac{\left\vert\widehat\lambda^{(2)}_{h}-\lambda^{(2)}\right\vert}{\left\vert \lambda^{(2)}\right\vert} $
 & eoc \\
\midrule  
0.2   & &3.232e-3 &-&&1.885e-3 & -&& 6.745e-3&-\\
0.1 & &  7.099e-4&2.19 &&3.926e-4 & 2.26& &1.406e-3&2.26\\
0.05  & &  1.635e-4& 2.11&&8.958e-5 &2.13& &3.054e-4&2.20\\
\bottomrule
\end{tabular} 
\end{center}
\caption{Error of the approximations of the eigenvalues of
$B_{\beta}$, $\beta^{-1}=-1.5$, of the unit ball for different mesh-sizes
$h$. }\label{Table:ErrorsPrime}
\end{table}

\begin{figure}[ht]
 \begin{center}
 \includegraphics[width=0.32\textwidth]{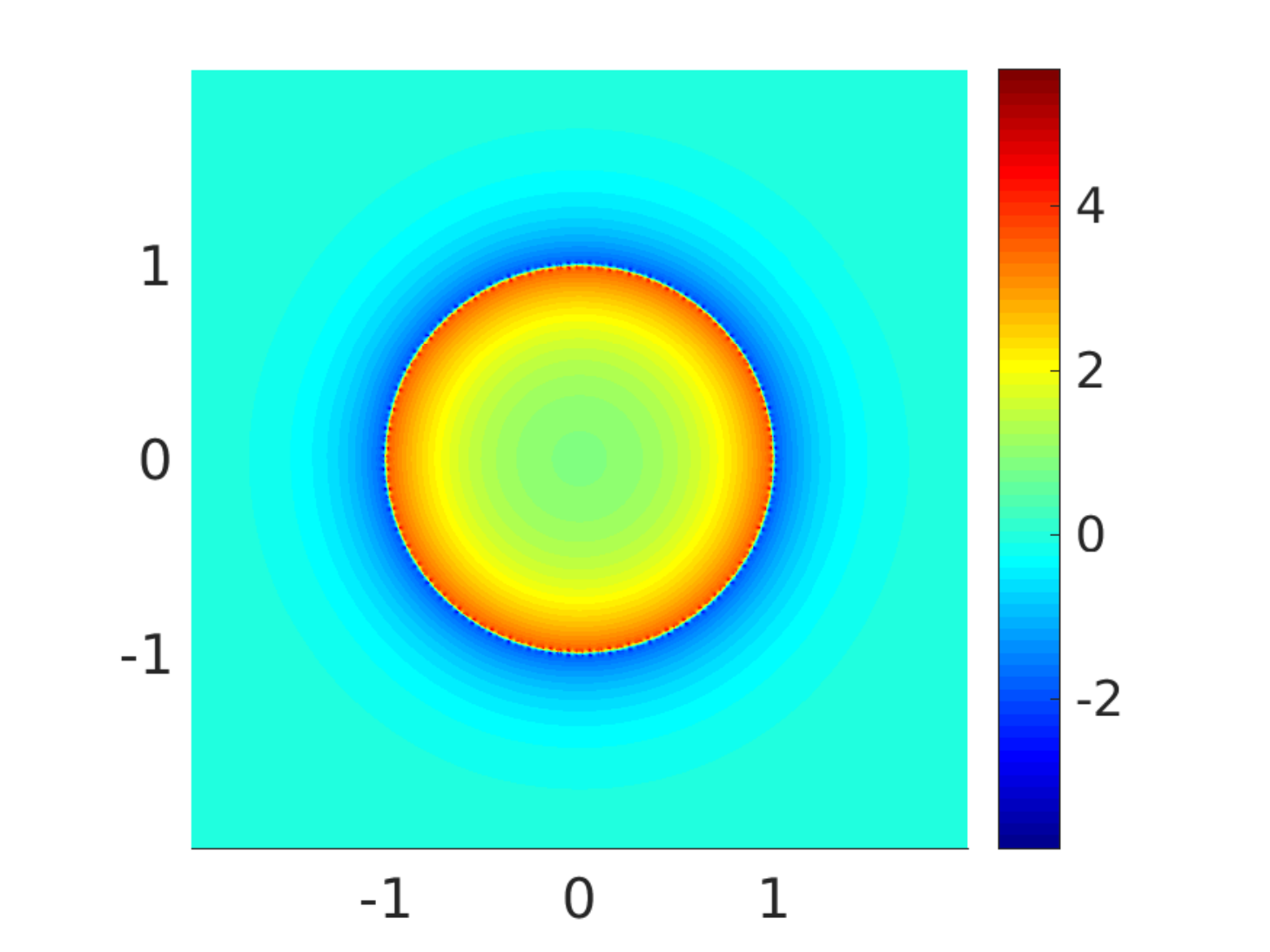}    
  \includegraphics[width=0.32\textwidth]{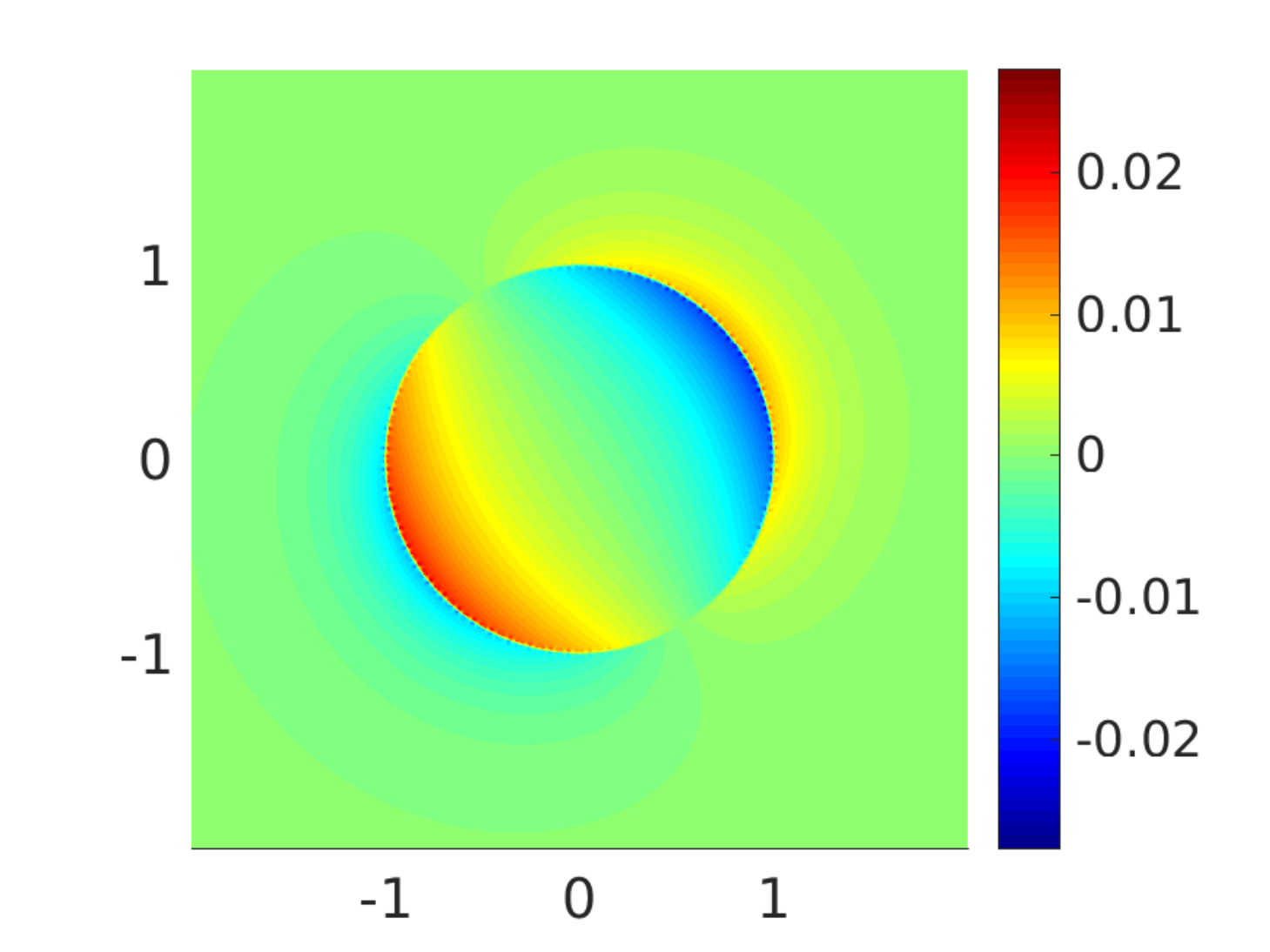} 
   \includegraphics[width=0.32\textwidth]{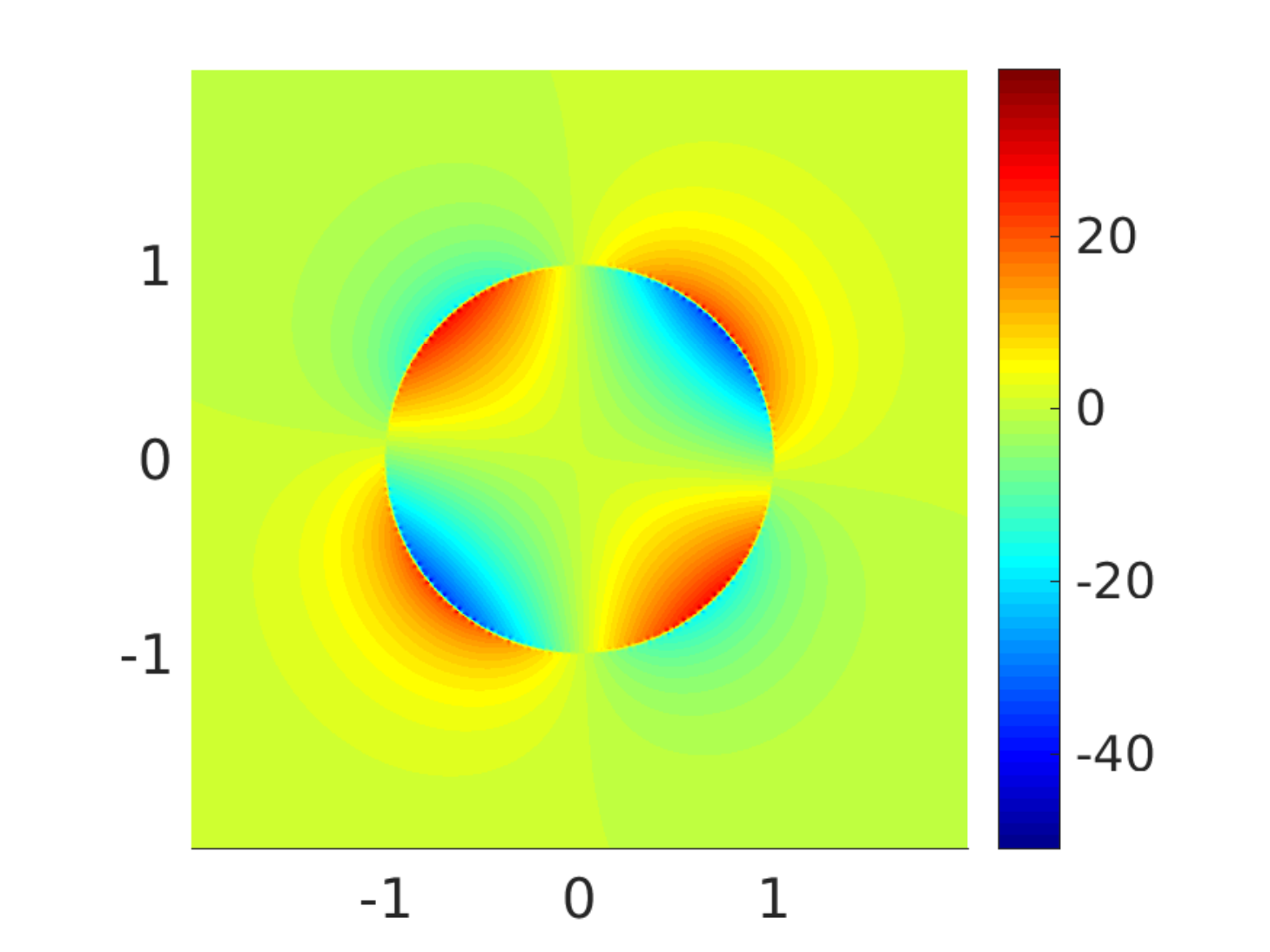}     
 \end{center}
\caption{ Computed eigenfunctions of
 $B_{\beta}$, $\beta^{-1}=-1.5$, in the
 $xy$-plane for the unit ball.}\label{Fig:EigFunctionsPlanePrime}
  \end{figure}
\subsubsection{L-shape domain}In the second  numerical example we have chosen as domain $\Omega_\text{i}$ a so-called L-shape domain with $\Omega_\text{i}= (-1,1)^3\setminus([0,1]^2\times[-1,1])$ and we have set $\beta^{-1}=-0.75$.  
In the numerical experiments the ellipse 
$g(t)=c+a\cos(t)+i b \sin(t)$, $t\in [0,2\pi]$, with $c=-4.0$,  $a=3.99$  
and  $b=0.01$, is taken as contour for the contour integral method.  
We have got three eigenvalues of the discretized eigenvalue problem inside this contour,  namely $\lambda_h^{(0)}= -5.54$, $\lambda_h^{(1)}=-4.41$ and $\lambda_h^{(2)}=-2.94$ for the mesh-size $h=0.1$. Plots of the numerical approximations of the eigenfunctions in the $xy$-plane are given in Figure~\ref{Fig:EigFunctionsLShape}.
 \begin{figure}[t]
 \begin{center}
 \includegraphics[width=0.32\textwidth]{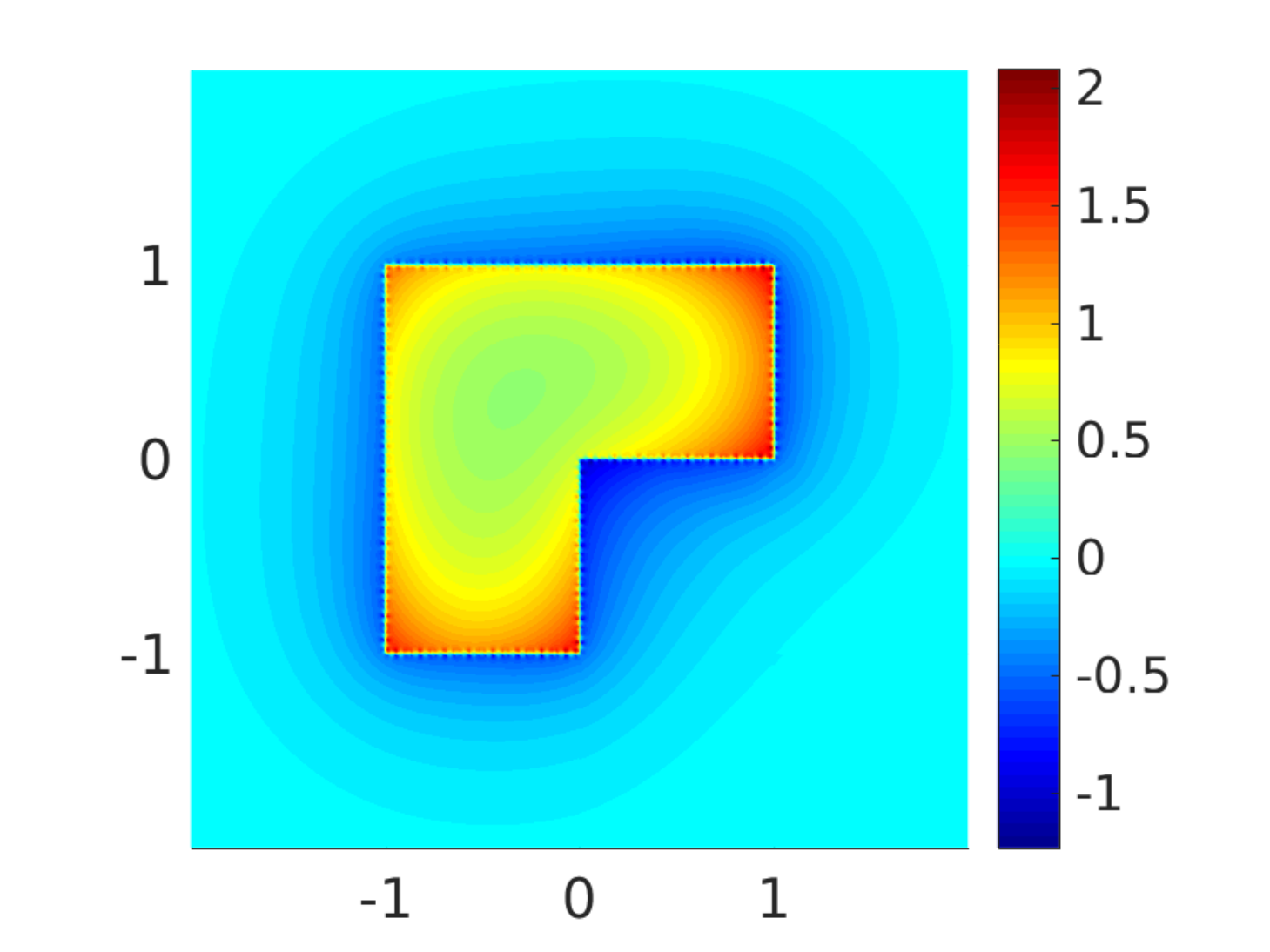}    
  \includegraphics[width=0.32\textwidth]{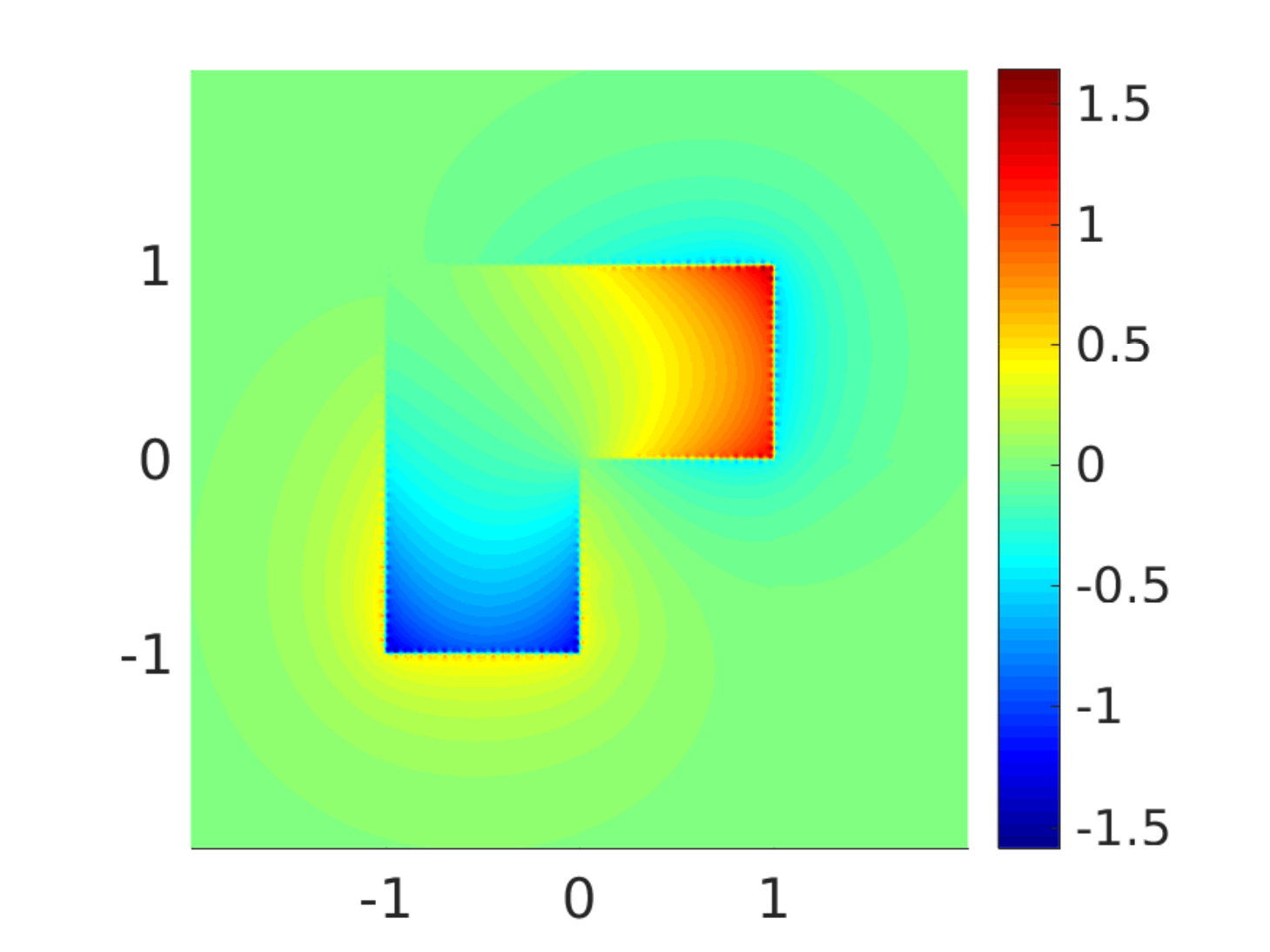}     
   \includegraphics[width=0.32\textwidth]{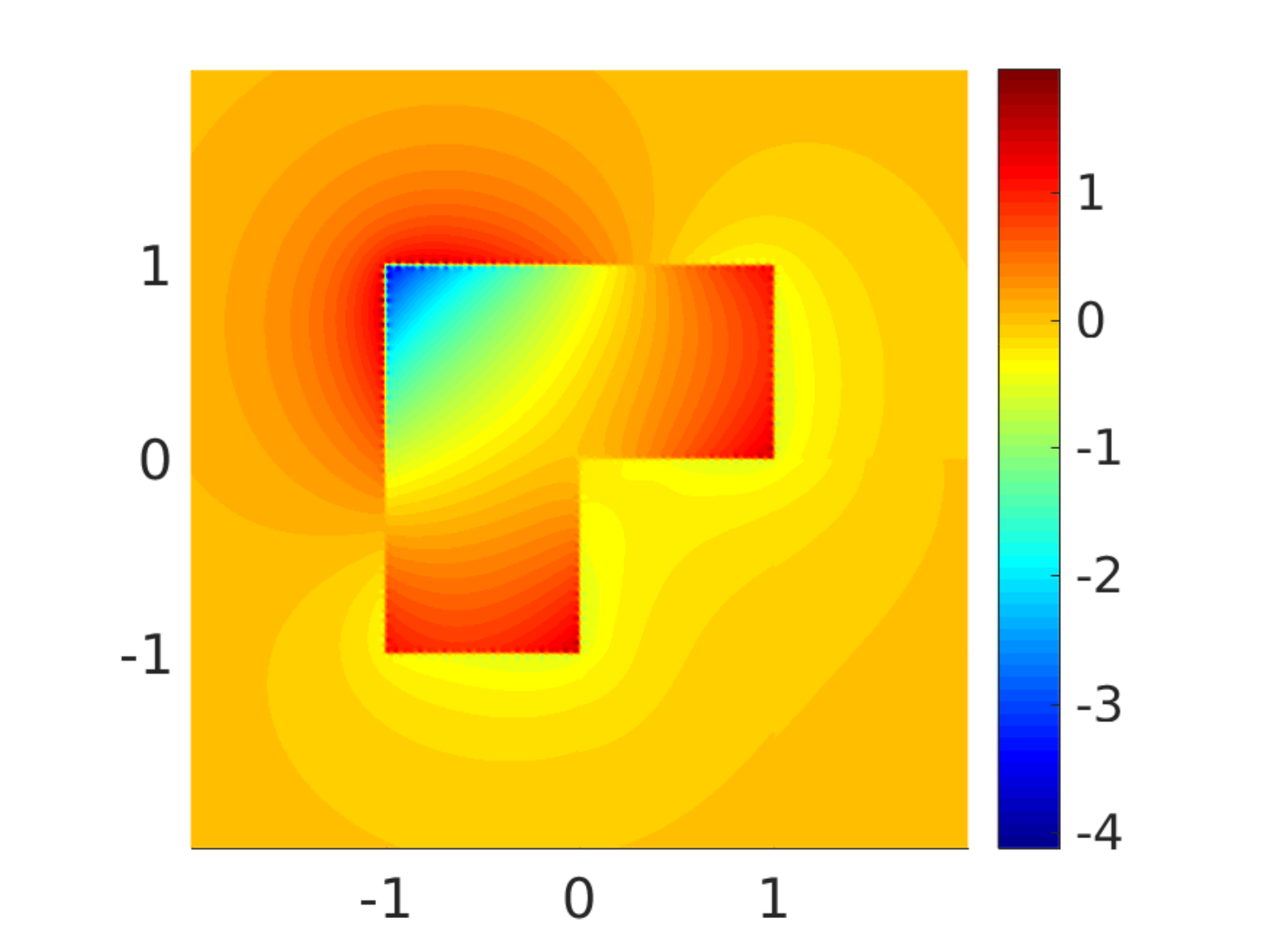}     
 \end{center}
 \caption{Computed eigenfunctions of
 $B_{\beta}$, $\beta^{-1}=-0.75$, for the  L-shape domain $\Omega_\text{i}= (-1,1)^3\setminus([0,1]^2\times[-1,1])$ in the
 $xy$-plane. }\label{Fig:EigFunctionsLShape}
 \end{figure}


\vglue 1cm 
{\small


\begin{thebibliography}{100}

\bibitem{AGHH05}
S.~Albeverio, F.~Gesztesy, R.~H{\o}egh-Krohn, and H.~Holden,
Solvable Models in Quantum Mechanics. With an Appendix by Pavel Exner,
2nd ed., Amer. Math. Soc. Chelsea Publishing, Providence, RI (2005).

\bibitem{AGS87} J.~Antoine, F.~Gesztesy and J. Shabani, {\it Exactly solvable models of sphere interactions in quantum mechanics}, J. Phys. A \textbf{20} (1987), 3687--3712.

\bibitem{BEHL17}
J.~Behrndt, P.~Exner, M.~Holzmann, and V.~Lotoreichik,
{\it Approximation of Schr\"{o}dinger operators with $\delta$-interactions supported on hypersurfaces}, 
Math. Nachr. {\bf 290}(8-9) (2017), 1215--1248.



\bibitem{BEHL19}
J.~Behrndt, P.~Exner, M.~Holzmann, and V.~Lotoreichik,
{\it The Landau Hamiltonian with \boldmath{$\delta$}-potentials supported on curves},
arxiv:1812.09145 (2018).

\bibitem{BEHL19QS}
J.~Behrndt, P.~Exner, M. Holzmann, V.~Lotoreichik,
{\it On Dirac operators in $\mathbb{R}^3$ with electrostatic and Lorentz scalar $\delta$-shell interactions},
To appear in Quantum Stud.: Math. Found., DOI: \url{https://doi.org/10.1007/s40509-019-00186-6}. 

\bibitem{BGLL15} J.~Behrndt, G.~Grubb, M.~Langer, and V.~Lotoreichik,
\textit{Spectral asymptotics for resolvent differences of elliptic
              operators with {$\delta$} and {$\delta'$}-interactions on
              hypersurfaces}, J. Spectr. Theory
\textbf{5}(4) (2015), 697--729.


\bibitem{BLL13b} J.~Behrndt, M.~Langer, and V.~Lotoreichik,
\textit{Schr\"{o}dinger operators with $\delta$ and $\delta^\prime$-potentials supported on hypersurfaces}, Ann. Henri Poincar\'e
\textbf{14} (2013), 385--423.

\bibitem{BR12}
J.~Behrndt and J.~Rohleder,
{\it An inverse problem of {C}alder\'{o}n type with partial data},
Comm. Partial Differential Equations {\bf 37}(6) (2012), 1141--1159.

\bibitem{BR15}
J.~Behrndt and J.~Rohleder,
{\it Spectral analysis of selfadjoint elliptic differential operators, 
{D}irichlet-to-{N}eumann maps, and abstract {W}eyl functions},
Adv. Math. {\bf 285} (2015), 1301--1338.

\bibitem{Beyn} W.~J.~Beyn,
{\it An integral method for solving nonlinear eigenvalue problems},
Linear Algebra Appl. \textbf{432} (2012), 3839--3863.

\bibitem{BEKS}
J.~Brasche, P.~Exner, Y.~Kuperin, and P.~\v{S}eba,
{\it Schr\"odinger operators with singular interactions},
J.\ Math.\ Anal.\ Appl. \textbf{184} (1994), 112--139.

\bibitem{BFT98}
J.~Brasche, R.~Figari, and A.~Teta,
{\it Singular {S}chr\"{o}dinger operators as limits of point  interaction {H}amiltonians},
Potential Anal. \textbf{8} (1998), 163--178.

\bibitem{BMNW08} M.~Brown, M.~Marletta, S.~Naboko, and I.~Wood, {\it Boundary triplets and {$M$}-functions for non-selfadjoint operators, with applications to elliptic {PDE}s and block
              operator matrices}, J. Lond. Math. Soc. (2)
\textbf{77}(3) (2008), 700--718.

\bibitem{BO07}
J.~Brasche and K.~O\v{z}anov\'{a},
{\it Convergence of Schr\"{o}dinger operators},
SIAM\ J.\ Math.\ Anal. \textbf{39} (2007), 281--297.



\bibitem{E08} P.~Exner,
Leaky quantum graphs: a review,
in: \textit{Analysis on graphs and its applications}.
Selected papers based on the Isaac Newton Institute for
Mathematical Sciences programme, Cambridge, UK, 2007.
\textit{Proc.\ Symp.\ Pure Math.} \textbf{77} (2008), 523--564.

\bibitem{EK15}
P.~Exner and H.~Kova{\v{r}}{\'{\i}}k,
Quantum Waveguides.
Theoretical and Mathematical Physics, Springer (2015).

\bibitem{EN03}
P.~Exner and K.~N\v{e}mcov\'{a},
{\it Leaky quantum graphs: approximations by point-interaction {H}amiltonians},
J. Phys. A \textbf{36} (2003), 10173--10193.

\bibitem{FK}
A.~Figotin and P.~Kuchment,
{\it Band-gap structure of spectra of periodic dielectric and acoustic media. {II}. {T}wo-dimensional photonic crystals},
SIAM J. Appl. Math. \textbf{56}(6) (1996), 1561--1620.


\bibitem{GohbergGoldberg1990}
I.~C.~Gohberg, S.~Goldberg, M.~A.~Kaashoek,
Classes of {L}inear {O}perators. {V}ol. {I}.
Birkh{\"a}user Verlag, Basel (1990).

\bibitem{GohbergSigal:1971}
I.~C.~Gohberg and E.~I.~Sigal,
{\it An operator generalization of the logarithmic residue theorem and {R}ouch\'e's theorem},
Math. USSR-Sb. \textbf{13} (1971), 603--625.


\bibitem{Karma1} O.~Karma, {\it Approximation in eigenvalue problems for
holomorphic {F}redholm operator functions. {I}}, Numer. Funct. Anal. Optim.
\textbf{17} (1996), 365--387.

\bibitem{Karma2} O.~Karma, {\it Approximation in eigenvalue problems for
holomorphic {F}redholm operator functions. {II}. ({C}onvergence rate) }, Numer.
Funct. Anal. Optim.
\textbf{17} (1996), 389--408.

\bibitem{K95}
T.~Kato,
Perturbation theory for linear operators.
Springer-Verlag, Berlin, reprint of the 1980 edition (1995).

\bibitem{KSU}
A.~Kimeswenger, O.~Steinbach, and G.~Unger,
{\it Coupled finite and boundary element methods for fluid-solid
              interaction eigenvalue problems},
SIAM J. Numer. Anal. \textbf{52}(5) (2014), 2400--2414.


\bibitem{Kleefeld:2013}
A.~Kleefeld,
{\it A numerical method to compute interior transmission eigenvalues},
Inverse Problems \textbf{29}(10) (2013), 104012, 20.



\bibitem{KozlovMazya:1999}
V.~Kozlov and V.~Maz$'$ya,
Differential Equations with Operator Coefficients with Applications to Boundary Value Problems for Partial Differential Equations.
Springer-Verlag, Berlin (1999).

\bibitem{KP31}
R. de L.~Kronig and W.~Penney, 
{\it Quantum mechanics of electrons in crystal lattices},
Proc.\ Roy.\ Soc.\ Lond. \textbf{130} (1931), 499--513.

\bibitem{M00}
W.~McLean,
Strongly Elliptic Systems and Boundary Integral Equations,
Cambridge University Press, Cambridge (2000).

\bibitem{MPS16} A.~Mantile, A.~Posilicano, and M.~Sini,
\textit{Self-adjoint elliptic operators with boundary conditions on
              not closed hypersurfaces}, J. Differential Equations
\textbf{261}(1) (2016), 1--55.

\bibitem{O06}
K.~O\v{z}anov\'{a},
{\it Approximation by point potentials in a magnetic field},
J. Phys. A \textbf{39} (2006), 3071--3083.


\bibitem{SauterSchwab} S.~A.~Sauter and C.~Schwab,
Boundary element methods.
Springer-Verlag, Berlin (2011).

\bibitem{BEMplusplus:2015}
W.~{\'S}migaj, S.~Arridge, T.~Betcke , J.~Phillips,  and  J.~Schweiger,
{\it Solving Boundary Integral Problems with {BEM}++},
ACM Trans. Math. Software \textbf{41}(6) (2015), 1--40.


\bibitem{Steinbach}
O.~Steinbach,
Numerical approximation methods for elliptic boundary value problems. Finite and boundary elements. 
Springer, New York (2008).

\bibitem{SU12} 0.~Steinbach and G.~Unger, {\it Convergence analysis of a {G}alerkin
boundary element method for the {D}irichlet {L}aplacian eigenvalue problem},
SIAM J. Numer. Anal.  \textbf{50} (2012), 710--728.

\bibitem{T35} 
L.\,H.~Thomas,
{\it The interaction between a neutron and a proton and the structure of $H^3$}, 
Phys. Rev., II. Ser. \textbf{47} (1935), 903--909.

\bibitem{Unger:2009}
G.~Unger, 
Analysis of Boundary Element Methods for Laplacian Eigenvalue Problems.
PhD thesis, Graz University of Technology, Graz, 2009.

\end{thebibliography}
\end{document}